\DeclareMathOperator{\cosech}{cosech}
\theoremstyle{plain}
\newtheorem{theorem}{Theorem}[section]
\newtheorem*{theorem*}{Theorem}
\newtheorem{proposition}[theorem]{Proposition}
\newtheorem{lemma}[theorem]{Lemma}
\newtheorem{corollary}[theorem]{Corollary}
\newtheorem{definition}[theorem]{Definition}
\theoremstyle{definition}
\newtheorem{remark}[theorem]{Remark}
\newtheorem*{remark*}{Remark}
\newtheorem*{notation*}{Notation}
\newcommand{\Til}{\mathcal T}
\newcommand{\R}{\mathbb R}
\newcommand{\HS}{S}
\newcommand{\HVS}{\mathbf S}
\newcommand{\HD}{H_D}
\newcommand{\HN}{H_N}
\newcommand{\Tilh}{\mc T_h}
\renewcommand{\P}{{\mathbf P}}
\newcommand{\dx}{\, d x}
\newcommand\px{\partial_x}
\newcommand{\evaluate}[1]{\widetilde{{#1}}}
\newcommand{\defn}{\mathrel{:=}}
\def\ra{\right\vert}
\def\rA{\right\Vert}
\def\la{\left\vert}
\def\lA{\left\Vert}
\newcommand{\mN}{\mathcal{N}}
\newcommand\mI{\mathcal{I}}
\def\xC{\mathbb{C}}
\def\xR{\mathbb{R}}
\def\xZ{\mathbb{Z}}
\def\mez{\frac{1}{2}}
\newcommand{\mc}{\mathcal}
\newcommand{\mbb}{\mathbb}
\newcommand{\mf}{\mathfrak}
\renewcommand{\Re}{\operatorname{Re}}
\renewcommand{\Im}{\operatorname{Im}}
\newcommand{\Rl}{\mbb R}
\newcommand{\Z}{\mbb Z}
\newcommand{\Strip}{S}
\newcommand{\mfH}{{\mf H}}
\newcommand{\bfB}{\mathbf{B}}
\newcommand{\B}{{\mathbf B}}
\numberwithin{equation}{section}
\begin{document}

\title{A Morawetz inequality for gravity-capillary water waves at low Bond number}
\author[]{Thomas Alazard, Mihaela Ifrim and Daniel Tataru}

\begin{abstract}
This paper is devoted to the 2D gravity-capillary water waves equations in their Hamiltonian formulation, addressing the general question of proving Morawetz inequalities. We continue the analysis initiated in our previous work, where we have established local energy decay 
estimates for gravity waves. Here we add surface tension and prove a stronger estimate
with a local regularity gain, akin to the smoothing effect for dispersive equations. 
Our main result holds globally in time and holds for genuinely nonlinear waves, since we are 
only assuming some very mild uniform Sobolev bounds for the solutions. 
Furthermore, it is uniform both in the infinite depth limit and the zero surface tension limit.

\end{abstract}

\maketitle

\tableofcontents

\section{Introduction}\label{s:Introduction}

\subsection{The water-wave equations}
A classical topic in the mathematical theory of hydrodynamics 
concerns the propagation of water waves. 
The problem consists in studying the evolution of the free surface 
separating air from an incompressible perfect fluid, together with the evolution of the velocity field 
inside the fluid domain. 
We assume that the free surface $\Sigma(t)$ is a graph and that 
the fluid domain $\Omega(t)$ has a flat bottom, so that
\begin{align*}
\Omega(t)&=\{ (x,y) \in \xR \times\xR \, \arrowvert\, -h<y<\eta(t,x) \},\\
\Sigma(t) &= \{ (x,y) \in \xR \times\xR \, \arrowvert\, \,  y=\eta(t,x) \},
\end{align*}
where $h$ is the depth and $\eta$ is an unknown (called the free surface elevation). We assume that the velocity field $v\colon \Omega\rightarrow \xR^2$ is irrotational, so that $v=\nabla_{x,y}\phi$ for some potential 
$\phi\colon\Omega\rightarrow \xR$ satisfying 
\begin{equation}\label{t5}
\begin{aligned}
&\Delta_{x,y}\phi=0\quad\text{in }\Omega,\\ 
&\partial_{t} \phi +\mez \la \nabla_{x,y}\phi\ra^2 +P +g y = 0 \quad\text{in }\Omega,\\
&\partial_y\phi =0 \quad \text{on }y=-h,
\end{aligned}
\end{equation}
where 
$P\colon \Omega\rightarrow\xR$ is the pressure, 
$g>0$ is the acceleration of gravity, 
and $\Delta_{x,y}=\partial_x^2+\partial_y^2$. Partial differentiations will often be 
denoted by suffixes so that $\phi_x=\partial_x\phi$ and $\phi_y=\partial_y \phi$.

The 
water-wave problem is described 
by two equations that hold on the free surface: 
firstly an equation describing the time evolution of $\Sigma$:
\begin{equation}\label{eta}
\partial_{t} \eta = \sqrt{1+\eta_x^2}\, \phi_n \arrowvert_{y=\eta}=\phi_y(t,x,\eta(t,x))-
\eta_x(t,x)\phi_x(t,x,\eta(t,x)),
\end{equation}
and secondly an equation for the balance of forces at the free surface:
\begin{equation}\label{pressure}
P\arrowvert_{y=\eta}=-\kappa H(\eta),
\end{equation}
where $\kappa$ is the coefficient of surface tension 
and $H(\eta)$ is the curvature given by
\begin{equation*}
H(\eta)=\px\left( \frac{\eta_x}{\sqrt{1+\eta_x^2}}\right).
\end{equation*}
We begin by recalling the main features of the water wave problem.
\begin{itemize}
\item \textbf{Hamiltonian system. } 
Since $\phi$ is harmonic function with Neumann boundary condition on the 
bottom, it is fully determined by 
its trace on $\Sigma$. Set 
$$
\psi(t,x)=\phi(t,x,\eta(t,x)).
$$
Zakharov discovered that $\eta$ and $\psi$ are canonical variables. 
Namely, he gave the following Hamiltonian formulation 
of the water-wave equations (\cite{Zakharov1968,Zakharov1998}):
$$
\frac{\partial \eta}{\partial t}=\frac{\delta \mathcal{H}}{\delta \psi},\quad 
\frac{\partial \psi}{\partial t}=-\frac{\delta \mathcal{H}}{\delta \eta},
$$
where $\mathcal{H}$ is the energy, which reads
\begin{equation}\label{Hamiltonian}
\mathcal{H}=\frac{g}{2}\int_\R \eta^2\dx+\kappa \int_\R \left(\sqrt{1+\eta_x^2}-1\right)\dx
+\mez\iint_{\Omega(t)}\la \nabla_{x,y}\phi \ra^2\, dydx.
\end{equation}
The Hamiltonian is the sum of the gravitational potential energy, 
a surface energy due to stretching of the surface and the kinetic energy. 
One can give more explicit evolution equations 
by introducing the 
Dirichlet to Neumann operator associated to the fluid domain $\Omega(t)$, defined by 
$$
G(\eta)\psi=\sqrt{1+\eta_x^2}\, \phi_{n \arrowvert_{y=\eta}}=(\phi_y-\eta_x \phi_x)_{\arrowvert_{y=\eta}}.
$$
Then (see \cite{LannesLivre}), 
with the above notations, the water-wave system reads
\begin{equation}\label{systemT}
\left\{
\begin{aligned}
&\partial_t \eta=G(\eta)\psi \\
&\partial_t \psi+g\eta +\mez \psi_x^2-\mez \frac{(G(\eta)\psi+\eta_x\psi_x)^2}{1+\eta_x^2}-\kappa H(\eta)=0.
\end{aligned}
\right.
\end{equation}
Of course the Hamiltonian is conserved along the flow. Another conservation law that is essential in this paper is the conservation of the horizontal momentum,
\begin{equation} \label{momentum}
\mathcal M = \int_\R \eta \psi_x \, dx.  
\end{equation}
From a Hamiltonian perspective, this can be seen as arising via Noether's theorem as
the generator for the horizontal translations, which commute with the water wave flow.

\item \textbf{Scaling invariance.} 
Another symmetry is given by the scaling invariance which holds 
in the infinite depth case (that is when $h=\infty$)  when either $g = 0$ or $\kappa = 0$.

If $\kappa = 0$  and  $\psi$ and $\eta$ are solutions of the gravity water waves equations \eqref{systemT}, then $\psi_\lambda$ and $\eta_\lambda$ defined by
\[
\psi_\lambda(t,x)=\lambda^{-3/2} \psi (\sqrt{\lambda}t,\lambda x),\quad 
\eta_\lambda(t,x)=\lambda^{-1} \eta(\sqrt{\lambda} t,\lambda x),
\]
solve the same system of equations. The (homogeneous) 
Sobolev spaces invariant by this scaling 
correspond to $\eta$  in $\dot{H}^{3/2}(\xR)$ and $\psi$ in $\dot{H}^2(\xR)$. 

On the other hand if $g = 0$ and $\psi$ and $\eta$ are solutions of
the capillary water waves equations \eqref{systemT}, then
$\psi_\lambda$ and $\eta_\lambda$ defined by
\[
\psi_\lambda(t,x)=\lambda^{-\frac12} \psi (\lambda^\frac32 t,\lambda x),\quad 
\eta_\lambda(t,x)=\lambda^{-1} \eta(\lambda^\frac32 t,\lambda x),
\]
solve the same system of equations. The (homogeneous) Sobolev spaces invariant by this scaling correspond to $\eta$  in $\dot{H}^{3/2}(\xR)$ and $\psi$ in $\dot{H}^1(\xR)$. 

\item 
This is a \textbf{quasi-linear} system of \textbf{nonlocal} equations. 
As a result, even the study of the Cauchy problem for smooth data is highly nontrivial. 
The literature on this topic is extensive by now, 
starting with the works of Nalimov~\cite{Nalimov} and
Yosihara~\cite{Yosihara}, who proved existence and uniqueness in Sobolev spaces under a smallness assumption. Without a smallness assumptions on the data, the well-posedness of the Cauchy problem     
was first proved by Wu~\cite{Wu97, Wu99} without surface tension and 
by Beyer-G\"unther in~\cite{BG} in the case with surface tension. 
Several extensions of these results were obtained 
by various methods and many authors. We begin by quoting recent results for gravity-capillary waves. 
For the local in time Cauchy problem 
we refer to \cite{Agrawal-LWP,ABZ1,AmMa,CMSW,CS,IguchiCPDE,LannesKelvin,LannesLivre,MingWang-LWP,RoTz,Schweizer,ScWa,ShZe}, 
see also~\cite{ITc} and \cite{IoPuc,X-Wang-capillary} for global existence results 
for small enough initial data which are localized, 
and \cite{CCFGS-surface-tension,FeIoLi-Duke-2016} for results about splash singularities 
for large enough initial data. 
Let us recall that the Cauchy problem for the 
gravity-capillary water-wave equations is locally 
well-posed in suitable function spaces which are $3/2$-derivative more 
regular than the scaling invariance, e.g.\ when initially
\[
\eta \in H^{s+\mez}(\xR), \qquad \psi \in H^{s}(\xR), \qquad s >\frac 5 2.
\]
Actually, some better results hold using Strichartz estimates (see \cite{dPN1,dPN2,Nguyen-AIHPANL-2017}). 
There are also many recent results for the equations 
without surface tension, and we refer 
the reader to the papers~\cite{Ai1,Ai2,AD-global,CCFGGS-arxiv,CS2,dePoyferre-ARMA,GMS,GuTi,H-GIT,HIT,IT,IoPu,Wu09,Wu10}.

\item \textbf{Dispersive equation.} 
Consider the linearized water-wave equations:
 $$
\left\{
 \begin{aligned}
& \partial_t \eta=G(0)\psi=\la D\ra\tanh(h\la D\ra) \psi, \qquad (\la D\ra=\sqrt{-\partial_x^2})\\
& \partial_t \psi+g\eta-\kappa \partial_{x}^2\eta=0.
\end{aligned}
\right.
$$
Then the dispersion relationship reads 
$\omega^2= k \tanh(h k) (g+\kappa k^2)$, which 
shows that water waves are dispersive waves. 
The dispersive properties have been studied for many different problems, 
including the global in time existence results alluded to before and 
also various problems about Strichartz estimates, 
local smoothing effect, control theory 
or the study of solitary waves (see e.g.\ \cite{Ai1,Ai2,ABZ1,CHS,dPN2,dPN1,IT2018solitary,Zhu1,Zhu2}).

\end{itemize}

\subsection{Morawetz estimates}
Despite intensive researches on dispersive or Hamiltonian equations, 
it is fair to say that many natural questions concerning 
the dynamics of water waves are mostly open. 
Among these, we have initiated in our previous work \cite{AIT} 
the study of Morawetz estimates for water waves. 
In this paragraph, we introduce this problem  within the general framework of Hamiltonian equations. 

Consider a Hamiltonian system of the form 
$$
\frac{\partial \eta}{\partial t}=\frac{\delta \mathcal{H}}{\delta \psi},\quad 
\frac{\partial \psi}{\partial t}=-\frac{\delta \mathcal{H}}{\delta \psi}\quad\text{with}\quad 
\mathcal{H}=\int e\dx,
$$
where the density of energy 
$e$ depends only on $\eta$ and $\psi$. 
This setting includes the water-wave equations, the 
Klein-Gordon equation, the Schr\"odinger equation, the Korteveg-de-Vries equation, etc. 
For the sake of simplicity, let us compare the following linear equations: 
\begin{itemize} 
\item (GWW) the gravity water-wave equation $\partial_t^2 u+\la D_x\ra u=0$;
\item (CWW) the capillary water-wave equation $\partial_t^2 u+\la D_x\ra^3 u=0$;
\item (KG) the Klein-Gordon equation $\square u+u=0$;
\item (S) the Schr\"odinger equation $i\partial_t u+\Delta u=0$. 
\end{itemize}
All of them  can be written in the above Hamiltonian form with
\begin{align*} 
&e_{\text{GWW}}=\eta^2+ (\la D_x\ra^{1/2} \psi)^2,\\
&e_{\text{CWW}}=(\la D_x\ra\eta)^2+ (\la D_x\ra^{1/2} \psi)^2,\\
&e_{\text{KG}}=(\langle D_x\rangle^{1/2}\eta)^2+ (\langle D_x\rangle^{1/2}\psi)^2,\\
& e_{\text{S}}=(\la D_x\ra \eta)^2+(\la D_x\ra \psi)^2.
\end{align*}
For such Hamiltonian systems, one can deduce from the Noether's theorem and 
symmetries of the equations some conserved quantities. 
For instance, the invariance by translation in time implies that 
the energy $\mathcal{H}$ is conserved, while the invariance by spatial translation implies that 
the momentum $\mathcal{M}=\int \eta\psi_x \dx$ is conserved. 
Then, a Morawetz estimate is an estimate of the local energy in terms of a quantity which scales 
like the momentum. 
More precisely, given a time $T>0$ and a compactly supported function $\chi=\chi(x)\ge 0$, one seeks an estimate 
of the form
$$
\int_0^T \int_\xR \chi (x)e(t,x) \,dx dt \le C(T) \sup_{t\in [0,T]}\lA \eta(t)\rA_{H^s(\xR)}
\lA \psi(t)\rA_{H^{1-s}(\xR)},
$$
for some real number $s$ chosen in a balanced 
way depending on the given equation. If the constant 
$C(T)$ does not depend on time $T$, then 
we say that the estimate is global in time. The study of the latter estimates was 
introduced in Morawetz's paper \cite{Morawetz1968} in the context of the nonlinear Klein-Gordon 
equation. 

The study of Morawetz estimates is interesting for both linear and nonlinear equations. We begin by discussing the linear phenomena. 
For the Klein-Gordon equation, 
the local energy density $e_{\text{KG}}$ measures the same 
regularity as the momentum density $\eta\psi_x$ so that only the global 
in time estimate is interesting. The latter result expresses the fact that 
the localized energy is globally integrable in time, 
and hence has been called a {\em local energy decay} result. 
Morawetz estimates have also been proved for the Schr\"odinger equation. Here 
the natural energy density $e_{\text{S}}$ measures a higher
regularity than the momentum density $\eta \psi_x$; for this reason the result 
is meaningful even locally in time and the resulting (local in time) Morawetz estimates 
have been originally called \emph{local smoothing estimates}, see~\cite{CS-ls,Vega,PlVe}. 
The same phenomena appears also for the KdV equation. 
In fact, the general study of Morawetz estimates has had a long 
history, which is too extensive to try to describe here. 
For further recent references we refer the reader to \cite{MT} for the wave equation, \cite{MMT} for the Schr\"odinger equation, 
\cite{OR} for fractional dispersive equations. These estimates are very robust and also hold for nonlinear problems, which make them useful in the  study of 
the Cauchy problem for nonlinear equations (see e.g.\, \cite{CKSTT,KPVinvent,PlVe}).

We are now ready to discuss Morawetz estimates for water waves. 
One of our motivations to initiate their study in \cite{AIT} 
is that this problem exhibits interesting new features. 
Firstly, since the problem is nonlocal, it is 
already difficult to obtain a global in time estimate for the linearized equations. 
The second and key observation is that, even if the equation is \emph {quasilinear}, one can prove such global in time estimates for the nonlinear equations, assuming some very mild smallness assumption on the solution. 

Given a compactly supported bump function $\chi=\chi(x)$, 
we want to estimate the local energy
\[
 \int_0^T \!\!\!\int_\xR\chi(x-x_0) ( g \eta^2 + \kappa \eta_x^2) \, dx dt
+ \int_0^T \!\!\!\int_\xR\int_{-h}^{\eta(t,x)} \chi(x-x_0)
\la \nabla_{x,y}\phi \ra^2 \, dydx dt,
\]
uniformly in time $T$ and space location $x_0$, assuming only a uniform bound 
on the size of the solutions. In our previous paper \cite{AIT}, 
we have studied this problem for gravity water waves (that is for $\kappa=0$). There the  momentum is not controlled by the Hamiltonian energy, and as a result 
we have the opposite phenomena to local smoothing, namely a loss 
of $1/4$ derivative in the local energy. This brings 
substantial difficulties in the low frequency analysis, 
in particular in order to prove a global in time estimate. 
In addition, we also took into account the effect of the bottom, 
which generates an extra difficulty in the analysis of 
the low frequency component. 

In this article we assume that $\kappa\ge 0$ so that 
one can both study gravity water waves and gravity-capillary water waves 
(for $\kappa>0$). Furthermore, 
we seek to prove Morawetz estimates uniformly with respect 
to surface tension $\kappa$ as $\kappa \to 0$, and also uniformly with respect to the depth $h$ as $h \to \infty$. In this context we will impose a smallness condition on the Bond number.

\subsection{Function spaces} 
As explained above, the goal of the present 
paper is to be able to take into account surface tension effects
i.e. the high frequency local smoothing, while still 
allowing for the presence of a bottom which yields substantial difficulties at 
low frequencies. In addition, one key point is that 
our main result holds provided that some mild Sobolev norms
of the solutions remain small enough uniformly in time. Here
we introduce the functional setting which will allow us to do so. 

Precisely, in this subsection we introduce three spaces: 
a space $E^0$ associated to the energy, 
a space $E^{\frac14}$ associated to 
the momentum, and a uniform in time control norm $\lA \cdot\rA_{X^\kappa}$ 
which respects the scaling invariance. 

The above energy $\mathcal{H}$ (Hamiltonian) corresponds to the energy space for $(\eta,\psi)$,
\[
E^0 = \left( g^{-\frac12} L^2(\xR) \cap \kappa^{-\frac12} \dot H^1(\R) \right) \times \dot{H}^{\frac12}_h(\xR),
\]
with the depth dependent $H^{\frac12}_h(\xR)$ space defined as
\[
\dot{H}^{\frac12}_h(\xR) = \dot H^\frac12(\xR) + h^{-\frac12} \dot H^1(\xR).
\]
We already observe in here the two interesting frequency thresholds in this problem, namely 
$h^{-1}$, determined by the depth, and $\sqrt{g/\kappa}$, determined by the balance between gravity
and capillarity. The dimensionless connection between these two scales is described by the Bond number,
\[
\B = \frac{\kappa}{gh^2}.
\]
A key assumption in the present work is that 
\[
\B \ll 1.
\]
This is further discussed in the comments  following our main result.

Similarly, in order to measure the momentum, we use the space
$E^{\frac14}$, which is the $h$-adapted linear $H^{\frac14}$-type norm for $(\eta,\psi)$ (which corresponds to the momentum),
\begin{equation}\label{e14}
\begin{aligned}
&E^{\frac14} := \left( g^{-\frac14} H^{\frac14}_h(\xR) \cap \kappa^{-\frac14} H^{\frac34}_h(\xR)\right)  \times \left( g^\frac14   \dot H^{\frac34}_h(\xR) +  \kappa ^\frac14   \dot H^{\frac14}_h(\xR) \right), \\
\end{aligned}
\end{equation}
with
\[
H^{s}_h(\xR) := \dot H^s(\xR) \cap  h^{s} L^2(\xR), \qquad   \dot H^{s}_h(\xR) = \dot{H}^s (\xR)+  h^{s-1} \dot H^1(\xR),
\]
so that in particular we have 
\[
|\mathcal M| \lesssim  \|(\eta,\psi)\|_{E^\frac14}^2.
\]
We remark that there is some freedom here in choosing the space $E^{\frac14}$; the one above is 
not as in the  previous paper \cite{AIT}, adapted to 
gravity waves, but is instead  changed above the frequency threshold $\lambda_0 = \sqrt{g/\kappa}$ and adapted to capillary waves instead at high frequency.

For our uniform a-priori bounds for the solutions, we begin by recalling the set-up in \cite{AIT}
for the pure gravity waves. There we were able  to use 
a scale invariant norm, which corresponds to the following Sobolev bounds:
\[
\eta \in H^\frac32_h(\xR), \qquad \nabla \phi\arrowvert_{y=\eta} \in g^\frac12 H^1_h(\xR) 
\cap \kappa^{\frac12} L^2(\xR).
\]
Based on this, we have introduced the homogeneous norm $X_0$ defined by
\[
X_0 := L^\infty_t H^{\frac{3}{2}}_{h} \times g^{-\mez}L^\infty_t H^1_h,
\]
and used it to define the  uniform control norm $X$ by
\[
\|(\eta,\psi)\|_{X} := \lA P_{\leq h^{-1}}
(\eta,\nabla \psi)\rA_{X_0}+
\sum_{\lambda > h^{-1}} 
\|P_\lambda (\eta,\psi)\|_{X_0} .
\]
Here we use a standard Littlewood-Paley decomposition beginning at frequency $1/h$,
\[
1 = P_{<1/h} + \sum_{1/h < \lambda \in 2^\Z} P_\lambda.
\]

The  uniform control norm we use in this paper, denoted by $X^\kappa$, 
matches the above scenario at low frequency, but requires some strengthening at high
frequency. The threshold frequency in this context is
\[
\lambda_0 = \sqrt{g/\kappa} \gg 1,
\]
and describes the transition from  gravity to capillary waves. Then we will complement the $X$ 
bound with  a stronger bound  for $\eta$ in the higher frequency range, setting 
\[
X_1 :=  \left(\frac{g}{\kappa}\right)^\frac14   L^\infty_t H^{2}.
\]
One can verify that this  matches the first component of $X_0$ exactly at frequency $\lambda_0$.
Then our uniform control norm $X^{\kappa}$ will be
\[
\|(\eta,\psi)\|_{X^\kappa} := \lA
(\eta,\nabla \psi)\rA_{X}+\| \eta\|_{X_1}.
\]

Based on the expression \eqref{Hamiltonian} for the
energy, we introduce the following notations for the local energy. Fix an 
arbitrary compactly supported nonnegative function $\chi$. 
Then,  the local energy  centered around a point $x_0$ is
\[
\begin{split}
\| (\eta,\psi)\|_{LE^\kappa_{x_0}}^2 :=   & \ g \int_0^T \int_\xR\chi(x-x_0) \eta^2\,dx dt + \kappa \int_0^T \int_\xR \chi(x-x_0)\eta_{x}^2\dx dt 
\\ & \ +\int_0^T \int_\xR\int_{-h}^{\eta(t,x)} \chi(x-x_0) \la \nabla_{x,y}\phi \ra^2\, dy dx dt.
\end{split}
\]
It is also of interest to take the supremum over $x_0$,
\begin{equation}\label{defiLEkappa}
\| (\eta,\psi)\|_{LE^\kappa}^2 :=  \sup_{x_0\in \xR} \| (\eta,\psi)\|_{LE_{x_0}}^2.
\end{equation}

\subsection{The main result}
Our main Morawetz estimate  for gravity-capillary water waves is as follows:

\begin{theorem}[Local energy decay for gravity-capillary waves]\label{ThmG1}
  Let $s>5/2$ and $C > 0$. There exist $\epsilon_0$ and $C_{0}$ such
  that the following result holds.  For all $T\in (0,+\infty)$, all
  $h\in [1,+\infty)$, all $g\in (0,+\infty)$, all $\kappa\in (0,+\infty)$
  with $\kappa \ll g$ and all solutions $(\eta,\psi)\in
  C^0([0,T];H^{s}(\R)\times H^s(\R))$ of the water-wave system
  \eqref{systemT} satisfying
\begin{equation}\label{uniform}
\| (\eta,\psi)\|_{X^\kappa} \leq \epsilon_0
\end{equation}
the following estimate holds
\begin{equation}\label{nMg}
\begin{aligned}
\| (\eta,\psi)\|_{LE^\kappa}^2  \leq C_0 ( \| (\eta, \psi) (0) \|^2_{E^{\frac14}}+ \| (\eta, \psi) (T) \|^2_{E^{\frac14}}).
\end{aligned}
\end{equation}
\end{theorem}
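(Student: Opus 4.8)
\emph{Strategy and reduction.} The plan is to reduce the quasilinear system \eqref{systemT} to a scalar diagonal paradifferential evolution and then run a Morawetz multiplier argument on it, carefully tracking the two frequency thresholds $h^{-1}$ (depth) and $\lambda_0=\sqrt{g/\kappa}$ (gravity--capillary transition); the regularity assumed in the theorem makes all the paradifferential manipulations and integrations by parts below rigorous. Introduce the good unknown $\omega=\psi-T_B\eta$, where $B=(G(\eta)\psi+\eta_x\psi_x)/(1+\eta_x^2)$ is the vertical trace of the velocity at the free surface, and paralinearize \eqref{systemT}. After the standard symmetrization the pair $(\eta,\omega)$ satisfies
\[
\partial_t\eta=T_p\,\omega+f_1,\qquad \partial_t\omega=-T_\ell\,\eta+f_2,
\]
where $p=p(x,\xi)$ is the paradifferential symbol of $G(\eta)$, with leading part $|\xi|\tanh(h|\xi|)$ perturbed by $\eta$, $\ell=\ell(x,\xi)$ has leading part $g+\kappa\xi^2$ (together with the linearization of the mean curvature), and the sources $f_1,f_2$ are quadratic or higher in $(\eta,\nabla\psi)$ plus lower-order paradifferential remainders, all carrying a factor $\|(\eta,\psi)\|_{X^\kappa}$. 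Diagonalizing through $u:=T_{\sqrt{\ell}}\,\eta+i\,T_{\sqrt{p}}\,\omega$ produces, modulo remainders $F$ still carrying a factor $\|(\eta,\psi)\|_{X^\kappa}$,
\[
\partial_t u+i\,T_c\,u=F,\qquad c=\sqrt{p\,\ell}\,,\quad\text{leading symbol}\quad c(\xi)=\sqrt{|\xi|\tanh(h|\xi|)\,(g+\kappa\xi^2)},
\]
the dispersion relation. By construction $\|u(t)\|_{L^2}^2\simeq\mathcal H$ is the energy, and the localized energy is captured entirely by $u$,
\[
\|(\eta,\psi)\|_{LE^\kappa_{x_0}}^2\;\simeq\;\int_0^T\!\!\int_{\xR}\chi(x-x_0)\,|u|^2\,\dxdt,
\]
once the bulk term $\int_0^T\!\iint_{-h}^{\eta}\chi\,|\nabla_{x,y}\phi|^2$ is rewritten as $\int_0^T\langle\chi\psi,G(\eta)\psi\rangle$ by an integration by parts in $\Omega(t)$, up to a remainder controlled by the lower frequencies --- an elliptic estimate in the strip which one must make uniform in $h$. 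Finally, the momentum \eqref{momentum} becomes a quadratic form in $u$ whose symbol is comparable to $\xi/c(\xi)$, and $E^{\frac14}$ is designed precisely so that $\|(\eta,\psi)(t)\|_{E^{\frac14}}^2\simeq\int_{\xR}\frac{|\xi|}{c(\xi)}\,|\hat u(t,\xi)|^2\,\dxi$ at every frequency --- below $h^{-1}$ where $c\sim\sqrt{gh}\,|\xi|$, between $h^{-1}$ and $\lambda_0$ where $c\sim\sqrt g\,|\xi|^{1/2}$, and above $\lambda_0$ where $c\sim\sqrt\kappa\,|\xi|^{3/2}$ --- so that in particular $|\mathcal M|\lesssim\|(\eta,\psi)\|_{E^{\frac14}}^2$.

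\emph{The Morawetz multiplier.} Fix a bounded nondecreasing weight $b=b(x)$ with $b'\simeq\chi(\,\cdot-x_0)$, and set $q(x,\xi)=b(x)\,\gamma(\xi)$ with $\gamma$ an odd symbol, suitably regularized near $\xi=0$, satisfying $\gamma(\xi)\,\partial_\xi c(\xi)\simeq 1$; since $\xi\,\partial_\xi c\simeq c$ one may take $\gamma(\xi)\simeq\xi/c(\xi)$, so that $\gamma$ sits exactly at the level of the momentum weight. Then, to leading order, $\{q,c\}\simeq-\partial_x q\,\partial_\xi c=-b'(x)\,\gamma(\xi)\,\partial_\xi c(\xi)\le-c_1\,\chi(x-x_0)$ uniformly in $\xi$. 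Consider $I_{x_0}(t):=\langle T_q u(t),u(t)\rangle$; differentiating and using $\partial_t u=-iT_c u+F$,
\[
\frac{d}{dt}I_{x_0}(t)=\big\langle(\partial_t T_q)\,u,u\big\rangle-i\big\langle[T_q,T_c]\,u,u\big\rangle+2\,\Re\big\langle T_q u,F\big\rangle.
\]
By the paradifferential calculus $-i[T_q,T_c]=-T_{\{q,c\}}+R$ with $R$ of lower order, and $-\{q,c\}\ge c_1\chi$ up to error terms carrying a factor $\partial_x c$, hence a factor $\|\eta\|_{X^\kappa}$; a sharp G{\aa}rding inequality then gives
\[
-\,i\big\langle[T_q,T_c]\,u,u\big\rangle\;\ge\;c_1\!\int_{\xR}\chi(x-x_0)\,|u|^2\,\dx\;-\;C\Bigl(\epsilon_0\!\int\!\chi|u|^2+\|u\|_{\mathrm{low}}^2\Bigr),
\]
where $\|u\|_{\mathrm{low}}$ collects lower-order, low-frequency contributions. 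Integrating over $[0,T]$,
\[
\int_0^T\!\!\int_{\xR}\chi(x-x_0)\,|u|^2\,\dxdt\;\lesssim\;|I_{x_0}(T)|+|I_{x_0}(0)|+\int_0^T\!\Bigl(\bigl|\big\langle(\partial_t T_q)u,u\big\rangle\bigr|+\bigl|\big\langle T_q u,F\big\rangle\bigr|\Bigr)\dt+\int_0^T\!\|u\|_{\mathrm{low}}^2\,\dt.
\]

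\emph{Closing the estimate.} Since $|q(x,\xi)|\lesssim|\xi|/c(\xi)$ with $b$ bounded, the boundary-in-time terms are localized momenta, dominated by the full momentum-type norm: $|I_{x_0}(0)|+|I_{x_0}(T)|\lesssim\|(\eta,\psi)(0)\|_{E^{\frac14}}^2+\|(\eta,\psi)(T)\|_{E^{\frac14}}^2$. The remaining time integrals are the technical heart of the argument: $\partial_t T_q$ brings in $\partial_t c$, hence $\partial_t\eta=G(\eta)\psi$, while $F$ is a sum of quadratic (and higher) expressions in $(\eta,\nabla\psi)$ together with paradifferential remainders; after further integrations by parts in $x$ and $t$ that exploit the equation, and using both the smallness and the scaling-criticality of $X^\kappa$, one bounds these contributions by $\delta\,\|(\eta,\psi)\|_{LE^\kappa_{x_0}}^2+C_\delta\bigl(\|(\eta,\psi)(0)\|_{E^{\frac14}}^2+\|(\eta,\psi)(T)\|_{E^{\frac14}}^2\bigr)$ for arbitrarily small $\delta>0$, with an analogous bound for $\int_0^T\|u\|_{\mathrm{low}}^2$ coming from a dedicated low-frequency analysis. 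Absorbing the $\delta$- and $\epsilon_0$-dependent terms into the left-hand side and restoring the bulk energy via the first step yields $\|(\eta,\psi)\|_{LE^\kappa_{x_0}}^2\lesssim\|(\eta,\psi)(0)\|_{E^{\frac14}}^2+\|(\eta,\psi)(T)\|_{E^{\frac14}}^2$ with a constant independent of $T$, $x_0$, $h$, $g$ and $\kappa$ in the range $\B\ll1$; taking the supremum over $x_0$ gives \eqref{nMg}.

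\emph{The main obstacle.} The hard point is the uniform treatment of the low-frequency regime $|\xi|\lesssim\lambda_0$ and the matching across the two thresholds $h^{-1}$ and $\lambda_0$. There the Morawetz estimate carries a genuine loss --- the momentum is not controlled by the energy, which is exactly why $E^{\frac14}$, and not $E^0$, appears on the right-hand side of \eqref{nMg} --- and the symbol $\gamma\simeq\xi/c(\xi)$ entering the multiplier fails to be a classical symbol near $\xi=0$ (the $|\xi|^{1/2}$ behaviour in the infinite-depth regime), so that the symbolic calculus, the G{\aa}rding inequality, and the error estimates must all be carried out with weights separately adapted to the ranges $|\xi|\lesssim h^{-1}$, $h^{-1}\lesssim|\xi|\lesssim\lambda_0$ and $|\xi|\gtrsim\lambda_0$, and then reconciled. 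Keeping every constant uniform simultaneously as $\kappa\to0$ and $h\to\infty$ is precisely what forces the Bond-number hypothesis $\B=\kappa/(gh^2)\ll1$: it is the statement that $\lambda_0=(h\sqrt{\B})^{-1}\gg h^{-1}$, so that capillarity activates only well above the depth scale, the low-frequency analysis of \cite{AIT} carries over essentially unchanged, and the new, favourable, local smoothing gain is confined to the range $|\xi|\gtrsim\lambda_0$.
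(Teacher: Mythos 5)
Your outline takes a genuinely different route from the paper (paradifferential diagonalization of \eqref{systemT} via the good unknown, followed by a pseudodifferential positive--commutator multiplier), whereas the paper never diagonalizes: it works with exact density--flux pairs for the momentum \eqref{momentum} and only passes to holomorphic coordinates to estimate the resulting error terms. The difference is not cosmetic, and it is exactly where your argument has a gap. In your scheme every error term --- the paralinearization source $F$, the remainder $R$ in $-i[T_q,T_c]=-T_{\{q,c\}}+R$, the term $\langle(\partial_t T_q)u,u\rangle$, and the ``low'' contributions --- is a spatially \emph{global} quadratic (or higher) expression whose size is controlled only through the uniform-in-time bound \eqref{uniform}. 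Its time integral over $[0,T]$ therefore scales like $\epsilon_0\,T\,\sup_t\|u(t)\|_{L^2}^2$, which is not uniform in $T$, and it is not dominated by $\delta\|(\eta,\psi)\|_{LE^\kappa_{x_0}}^2+C_\delta(\dots)$ either, since $LE^\kappa$ is a supremum over unit windows and does not control global space-time integrals; there is no spatial decay hypothesis here (unlike asymptotically flat local-smoothing results) that would localize $\partial_x c$ or $F$. The sentence ``after further integrations by parts in $x$ and $t$ that exploit the equation'' is precisely the missing proof. The paper's way around this is structural: because $(I_2,S_2)$ and $(I_3,S_3)$ are exact density--flux pairs, every bulk term in the time-integrated identity comes multiplied by the compactly supported weight $m_x$, hence is bounded by $\epsilon\,LE^\kappa$ and can be absorbed; even so, one term ($Err_3$) cannot be estimated directly and requires a normal form correction, and the new surface-tension term forces the decomposition \eqref{H-decomp} together with the multilinear estimates of Lemmas~\ref{l:fixed-t}, \ref{l:kappa-diff}, \ref{l:kappa-3}, proved in holomorphic coordinates. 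None of this has a counterpart in your outline, and without some substitute (normal forms, or bilinear estimates with genuinely localized weights) the commutator argument does not close uniformly in $T$.

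A second, more technical, problem concerns your boundary terms. The bound $|I_{x_0}(0)|+|I_{x_0}(T)|\lesssim\|(\eta,\psi)(0)\|^2_{E^{\frac14}}+\|(\eta,\psi)(T)\|^2_{E^{\frac14}}$ is not automatic for the diagonal functional $\langle T_qu,u\rangle$: the space $E^{\frac14}$ is matched to the \emph{product} $\eta\psi_x$ (the momentum), not to the sum of squares $\gamma(\xi)\bigl(\ell|\hat\eta|^2+p|\hat\omega|^2\bigr)$. At frequencies $|\xi|\lesssim h^{-1}$ one has $p\sim h\xi^2$, $\ell\sim g$, $c\sim\sqrt{gh}\,|\xi|$, so $\gamma\,p\,|\hat\omega|^2\sim\sqrt{h/g}\,|\widehat{\psi_x}|^2$, which exceeds the corresponding $E^{\frac14}$ contribution $\sim(gh)^{-\frac12}|\widehat{\psi_x}|^2$ by a factor of $h$; the crude estimate $|\langle T_qu,u\rangle|\lesssim\int\gamma|\hat u|^2$ is thus not uniform in depth. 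You would need to retain only the cross (momentum-like) part of the quadratic form --- in effect reconstructing the weighted momentum $\int m\,\eta\,\psi_x$, which is what the paper's fixed-time bounds \eqref{I2-bound}--\eqref{I3-bound} do directly, with Lemma~\ref{l:fixed-t} handling the extra non-product piece coming from the second density $I_3$.
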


We continue with several remarks concerning the choices of parameters/norms in the theorem.

\begin{remark}[Uniformity]
One key feature of our result is 
that it is global in time (uniform in $T$) and uniform in $h \geq 1$ and $\kappa \ll g$. In particular our estimate is uniform in the infinite depth limit and the zero surface tension limit.
\end{remark}

\begin{remark}[Time scaling]
Another feature of our result is that the statement of Theorem ~\ref{ThmG1} is invariant with respect to the following scaling law  (time associated scaling)  
\[
\begin{aligned}
(\eta (t,x), \psi(t,x)) &\rightarrow (\eta (\lambda t,x), \lambda \psi(\lambda t,x))\\
(g,\kappa,h) &\rightarrow (\lambda^2 g, \lambda^2 \kappa, h).
\end{aligned}
\]
This implies that the value of $g$ and $\kappa$ separately are not
important but their ratio is.  By scaling one could simply set 
$g = 1$ in all the proofs. We do not do that in order to improve the
readability of the article.
\end{remark}

\begin{remark}[Window size]
Once we have the local energy decay bounds for a unit window size, we also trivially have them 
for any higher window size $R > 1$, with a constant of $R$ on the right in the estimate. 
\end{remark}

\begin{remark} [Spatial scaling]
One can also rescale the spatial coordinates $x \to \lambda x$, and correspondingly
\[
\begin{aligned}
(\eta (t,x), \psi(t,x)) &\rightarrow (  \lambda^{-1}\eta (t, \lambda x), \lambda^{-\frac32} \lambda \psi(t, \lambda x))\\
(g,\kappa,h,R) &\rightarrow (g, \lambda{-2} \kappa, \lambda^{-1} h, \lambda^{-1} R).
\end{aligned}
\]
Then combining this with respect to the previous remark, we can restate our hypothesis $\kappa/g \ll 1 \leq h^2$ as a constraint on the Bond number, $\B \ll 1$, together with a  window size restriction as $\kappa/g \lesssim R^2 \lesssim h^2$.
\end{remark}

\begin{remark}
As already explained, the uniform control norms in  \eqref{uniform} are below the current local well-posedness threshold for this problem, and are instead closer to  one might view as the 
critical, scale invariant norms for this problem. The dependence on $h$ 
is natural as spatial scaling will also alter the depth $h$. In the infinite depth limit one recovers exactly the homogeneous Sobolev norms at low frequency.
We also note that, by Sobolev embeddings, our smallness assumption
guarantees that 
\[
|\eta| \lesssim \epsilon_0 h, \qquad |\eta_x | \lesssim \epsilon_0.
\]
\end{remark}

 Our previous work \cite{AIT} on gravity waves followed 
the same principles as in Morawetz's original paper (\cite{Morawetz1968}), proving
the results using the multiplier method, based on the momentum conservation law. One difficulty
we encountered there was due to the nonlocality of the problem, which made it 
far from obvious what is the ``correct" momentum density. Our solution in \cite{AIT} was to 
work in parallel with two distinct momentum densities.

Some of the difficulties in \cite{AIT} were connected to low frequencies, 
 due both to the fact that the equations are nonlocal, and that they have quadratic nonlinearities.
 A key to defeating these difficulties was to shift some of the analysis 
 from Eulerian coordinates and the holomorphic coordinates; this is because the latter provide a better setting to understand the fine bilinear and multilinear structure of the equations.

Here we are able to reuse the low frequency part of the analysis from \cite{AIT}. On the other 
hand, we instead encounter additional high frequency issues arising from the surface tension contributions.
As it turns out, these are also best dealt with in the set-up of holomorphic coordinates.

\subsection{Plan of the paper} 
The key idea in our proof of the Morawetz estimate is to think of the energy as the flux for the momentum. Unfortunately this is not as simple as it sounds, as the momentum density is not a uniquely defined object, and the ``obvious" choice does not yield the full energy as its flux, not even at leading order. To address this matter, in the next section, we review density flux pairs for the momentum. The standard density $\eta\psi_x$ implicit in \eqref{momentum} only allows one to control the  local potential energy, while for the local kinetic energy we introduce
an alternate density and the associated flux.

The two density-flux relations for the momentum are exploited in Section~\ref{s:nonlinear}. 
There the proof of the Morawetz inequality is reduced to three main technical lemmas.
However, proving these lemmas in the standard Eulerian setting seems nearly impossible; instead,
our strategy is to first switch to holomorphic coordinates.

We proceed as follows. In Section~\ref{ss:laplace} we review the holomorphic (conformal) coordinates
and relate the two sets of variables between Eulerian and holomorphic formulation. In particular
we show that the fixed energy type norms in the paper admit equivalent formulations in the two settings. In this we largely follow \cite{HIT}, \cite{H-GIT} and \cite{AIT}, though several 
new results are also needed. In Section~\ref{s:switch} we discuss the correspondence between the 
local energy norms in the two settings, as well as several other related matters.

The aim of the last section of the paper is to prove the three key main technical lemmas. 
This is done in two steps. First we obtain an equivalent formulation in the holomorphic setting, and then we use multilinear analysis methods to prove the desired estimates.

\subsection*{Acknowledgements} 
The first author was partially supported by the SingFlows project, 
grant ANR-18-CE40-0027 
of the French National Research Agency (ANR).
The second author was partially was supported by a Luce Assistant Professorship, by the Sloan Foundation, and by an NSF CAREER grant DMS-1845037. The third author was partially supported by the NSF grant DMS-1800294 
as well as by a Simons Investigator grant from the  Simons Foundation.

\section{Conservation of momentum and local conservation laws}\label{s:Cl}

A classical result is that the momentum, which is the following quantity,
\[
\mathcal{M}(t)=\int_\xR\int_{-h}^{\eta(t,x)}\phi_x (t,x,y)\, dy dx,
\]
is a conserved quantity:
$$
\frac{d}{dt} \mathcal{M}=0.
$$
This comes from the invariance with respect to horizontal translation (see Benjamin
and Olver \cite{BO} for a systematic study of the symmetries of
the water-wave equations). 
We exploit later the conservation of the momentum through the use of 
density/flux pairs $(I,S)$. By definition, these are pairs such that 
\begin{equation}\label{density}
\mathcal{M}=\int I \, dx ,
\end{equation}
and such that one has the conservation law
\begin{equation}\label{cl}
\partial_t I+\partial_x S=0.
\end{equation}
One can use these pairs by means of the multiplier method of Morawetz. 
Consider a function $m=m(x)$ which is positive and increasing. 
Multiplying the identity  \eqref{cl} by $m=m(x)$ and integrating 
over $[0,T]\times \xR$ then yields (integrating by parts)
\[
\iint_{[0,T]\times \xR} S(t,x) m_x \, dxdt =\int_\xR m(x)I(T,x)\, dx-\int_\xR m(x)I(0,x)\, dx.
\]
The key point is that, since the slope $m_x$ is nonnegative, the later identity is favorable provided 
that $S$ is non-negative.

There are three pairs $(I,S)$ that  play a role in our work. 
These pairs have  already 
been discussed in \cite[\S 2]{AIT}. Here we keep the same densities;
however, the associated fluxes will acquire an extra term  due to 
the surface tension term in the equations.

\begin{lemma}
The expression 
\[
I_1(t,x)=\int_{-h}^{\eta(t,x)}  \phi_x(t,x,y) \, dy,
\]
is a density for the momentum, with associated density flux
\[
S_1(t,x)\defn -\int_{-h}^{\eta(t,x)}\partial_t \phi\, dy -\frac{g}{2}\eta^2
+ \kappa\left(1-\frac{1}{\sqrt{1+\eta_x^2}}\right)
+\mez \int_{-h}^{\eta(t,x)}(\phi_x^2-\phi_y^2)\, dy.
\]
\end{lemma}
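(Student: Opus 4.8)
The plan is to verify the two required properties of the pair $(I_1, S_1)$ directly: first that $I_1$ is a density for the momentum, i.e. $\int_\xR I_1\, dx = \mathcal M$, and second that the local conservation law $\partial_t I_1 + \partial_x S_1 = 0$ holds. The first property is immediate from the definition of $\mathcal M$ as the integral over the fluid domain of $\phi_x$, together with Fubini's theorem, so essentially no work is required there. The heart of the matter is the identity $\partial_t I_1 + \partial_x S_1 = 0$, and since the density $I_1$ is unchanged from \cite[\S 2]{AIT}, the only genuinely new contribution is tracking how the surface tension term in the pressure law \eqref{pressure} feeds into the flux. Thus I expect most of this computation can be quoted from \cite{AIT}, with a focused extra calculation isolating the capillary piece.

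First I would compute $\partial_t I_1$ using the Leibniz rule for differentiation under the integral sign with a moving upper endpoint:
\[
\partial_t I_1 = \int_{-h}^{\eta}\partial_t\phi_x\, dy + \eta_t\, \phi_x\aeta .
\]
Next I would compute $\partial_x S_1$ term by term. The bulk terms $-\int_{-h}^{\eta}\partial_t\phi\, dy$ and $\mez\int_{-h}^{\eta}(\phi_x^2 - \phi_y^2)\, dy$ are differentiated again with the Leibniz rule, producing boundary terms at $y=\eta$ weighted by $\eta_x$; the $-\frac g2 \eta^2$ term gives $-g\eta\eta_x$; and the capillary term differentiates to
\[
\partial_x\!\left(\kappa\Big(1-\tfrac{1}{\sqrt{1+\eta_x^2}}\Big)\right) = \kappa\,\frac{\eta_x\eta_{xx}}{(1+\eta_x^2)^{3/2}} = \kappa\,\eta_x\, H(\eta),
\]
recognizing the curvature. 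I would then collect all interior integrals: using $\Delta_{x,y}\phi = 0$ one rewrites $\partial_x(\phi_x^2-\phi_y^2) = 2\phi_x\phi_{xx} - 2\phi_y\phi_{xy} = 2\phi_x\phi_{xx} + 2\phi_y\phi_{yy} = \partial_y(2\phi_x\phi_y) $ wait — more carefully, $\px(\phi_x^2 - \phi_y^2) = 2\phi_x\phi_{xx} - 2\phi_y\phi_{xy}$ and, since $\phi_{xx} = -\phi_{yy}$, this equals $-2\phi_x\phi_{yy} - 2\phi_y\phi_{xy} = -\py(2\phi_x\phi_y)$; after combining with $-\px(\partial_t\phi)$ and integrating the $y$-derivative pieces, the interior integrals telescope, leaving only boundary contributions at $y=\eta$ and $y=-h$. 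The bottom boundary terms vanish by $\phi_y|_{y=-h}=0$.

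The main obstacle — really the only substantive step — is the bookkeeping at the free surface: one must combine the surviving boundary terms at $y=\eta$ (namely $\eta_t\phi_x\aeta$ from $\partial_t I_1$, the $\eta_x$-weighted traces of $\partial_t\phi$ and of $\phi_x^2-\phi_y^2$, the term $-g\eta\eta_x$, and the new term $\kappa\eta_x H(\eta)$) and show they cancel. Here one invokes the kinematic boundary condition \eqref{eta} in the form $\eta_t = \phi_y\aeta - \eta_x\phi_x\aeta$, and the Bernoulli equation (the second line of \eqref{t5}) evaluated at $y=\eta$, namely $\partial_t\phi + \mez|\nabla_{x,y}\phi|^2 + P + g\eta = 0$ on $\Sigma$, together with the dynamic boundary condition $P\aeta = -\kappa H(\eta)$. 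Substituting $\partial_t\phi\aeta = -\mez(\phi_x^2+\phi_y^2)\aeta - g\eta + \kappa H(\eta)$ into the flux boundary term is precisely what generates the $\kappa\eta_x H(\eta)$ contribution needed to absorb the derivative of the capillary term, while the remaining algebra (the $g\eta\eta_x$ and the quadratic-in-$\nabla\phi$ traces) cancels exactly as in the gravity case treated in \cite{AIT}. I would present the gravity part briefly, referring to \cite[\S 2]{AIT}, and write out only the capillary cancellation in full.
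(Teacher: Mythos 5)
Your proposal is correct and takes essentially the same approach as the paper: Leibniz differentiation under the moving boundary, the kinematic condition, the Bernoulli equation together with the dynamic condition $P\arrowvert_{y=\eta}=-\kappa H(\eta)$ supplying the capillary contribution, and harmonicity plus the bottom condition $\phi_y(x,-h)=0$ to reduce the quadratic bulk term via $\partial_x(\phi_x^2-\phi_y^2)=-2\partial_y(\phi_x\phi_y)$. The only difference is organizational — you verify $\partial_t I_1+\partial_x S_1=0$ by computing both terms and cancelling at the surface, while the paper massages $\partial_t I_1$ directly into $-\partial_x S_1$; your capillary identity $\partial_x\bigl(\kappa(1-(1+\eta_x^2)^{-1/2})\bigr)=\kappa\,\eta_x H(\eta)$ is exactly the paper's observation $-\kappa\eta_x\partial_x\bigl(\eta_x/\sqrt{1+\eta_x^2}\bigr)=\kappa\partial_x\bigl(1/\sqrt{1+\eta_x^2}-1\bigr)$.
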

\begin{proof}
With minor changes this repeats the computation in \cite{AIT}. 
Given 
a function $f=f(t,x,y)$, we use the notation 
$\evaluate{f}$ to denote the function 
$$
\evaluate{f}(t,x)=f(t,x,\eta(t,x)).
$$
Then we have
$$
\partial_t I_1=\partial_t \int_{-h}^{\eta}\phi_x\, dy=(\partial_t \eta)\evaluate{\phi_x}+\int_{-h}^\eta \partial_t \phi_x \, dy.
$$
It follows from the kinematic equation for $\eta$ and 
the Bernoulli equation for $\phi$ that 
$$
\partial_t I_1=(\evaluate{\phi_y}-\eta_x\evaluate{\phi_x})\evaluate{\phi_x}
-\int_{-h}^{\eta}\partial_x \left(\mez \la\nabla_{x,y}\phi\ra^2+ P\right)\, dy,
$$
so
$$
\partial_t I_1=\evaluate{\phi_y}\evaluate{\phi_x}+\mez \eta_x\evaluate{\phi_y^2}
-\mez\eta_x\evaluate{\phi_x}^2+\eta_x \evaluate{P}
-\partial_x \int_{-h}^{\eta}\left(\mez \la\nabla_{x,y}\phi\ra^2+ P\right) \, dy.
$$
Using again the Bernoulli equation and using the 
equation for the pressure at the free surface), 
we find that
$$
\partial_t I_1=\evaluate{\phi_y}\evaluate{\phi_x}+\mez \eta_x\evaluate{\phi_y^2}
-\mez\eta_x\evaluate{\phi_x}^2 
-\kappa \eta_x \partial_x\left(\frac{\eta_x}{\sqrt{1+\eta_x^2}}\right)
+\partial_x \int_{-h}^{\eta}\left(\partial_t \phi+gy\right) \, dy.
$$
Since
$$
-\kappa \eta_x \partial_x\left(\frac{\eta_x}{\sqrt{1+\eta_x^2}}\right)
=\kappa\partial_x \frac{1}{\sqrt{1+\eta_x^2}}=\kappa \partial_x\left(\frac{1}{\sqrt{1+\eta_x^2}}-1\right),
$$
and since $\partial_x \int_{-h}^{\eta}gy\, dy=\partial_x(g\eta^2/2)$, to complete the proof, it is sufficient to verify that
$$
\evaluate{\phi_y}\evaluate{\phi_x}+\mez \eta_x\evaluate{\phi_y^2}
-\mez\eta_x\evaluate{\phi_x}^2=\mez\partial_x \int_{-h}^{\eta}(\phi_y^2-\phi_x^2)\, dy.
$$
This in turn follows from the fact that
$$
\int_{-h}^\eta(\phi_x\phi_{yx}-\phi_x\phi_{xx})\, dy=
\int_{-h}^\eta(\phi_x\phi_{yx}+\phi_x\phi_{yy})\, dy=
\int_{-h}^\eta\partial_y (\phi_x\phi_y)\, dy=\evaluate{\phi_x}\evaluate{\phi_y},
$$
where we used $\phi_{xx}=-\phi_{yy}$ and the solid wall boundary condition $\phi_y(t,x,-h)=0$.
\end{proof}

The above density-flux pair will not 
be directly useful because it has a linear component in it. 
However, we will use it as a springboard 
for the next two density-flux pairs.

\begin{lemma}\label{l:I2}
The expression 
\[
I_2(t,x)=\eta(t,x)\psi_x(t,x)
\]
is a density for the momentum, with associated density flux
\[
S_2(t,x)\defn -\eta\psi_t-\frac{g}{2}\eta^2+ \kappa\left(1- \frac{1}{\sqrt{1+\eta_x^2}}\right)+
\mez \int_{-h}^{\eta(t,x)}(\phi_x^2-\phi_y^2)\, dy.
\]
\end{lemma}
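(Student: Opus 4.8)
The plan is to obtain the pair $(I_2,S_2)$ directly from the pair $(I_1,S_1)$ of the previous lemma, by noticing that the two densities $I_1$ and $I_2$ differ by an exact $x$-derivative. Concretely, I would introduce
\[
G(t,x) := \int_{-h}^{\eta(t,x)} \phi(t,x,y)\,dy - \eta(t,x)\psi(t,x).
\]
Differentiating in $x$ with the Leibniz rule, and using $\evaluate{\phi}=\psi$, one gets $\partial_x \int_{-h}^{\eta}\phi\,dy = \psi\,\eta_x + I_1$ and $\partial_x(\eta\psi) = \psi\,\eta_x + I_2$, hence $I_1 - I_2 = \partial_x G$. In particular $\int_{\xR} I_2\,dx = \int_{\xR} I_1\,dx = \mathcal M$, so $I_2$ is indeed a density for the momentum (the decay needed to discard the boundary contributions at spatial infinity being supplied by the assumed regularity, just as for $\mathcal M$ and $I_1$).

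For the flux, I would use the conservation law $\partial_t I_1 + \partial_x S_1 = 0$ from the previous lemma together with $I_1 = I_2 + \partial_x G$, which gives $\partial_t I_2 + \partial_x\big(S_1 + \partial_t G\big) = 0$. Since two fluxes for the same density on the line differ only by a function of $t$, which we normalize to zero, this lets me take $S_2 := S_1 + \partial_t G$. Then I would compute $\partial_t G$ by the Leibniz rule: $\partial_t G = \eta_t\evaluate{\phi} + \int_{-h}^{\eta}\partial_t\phi\,dy - \eta_t\psi - \eta\,\partial_t\psi$, and again using $\evaluate{\phi}=\psi$ the two $\eta_t$-terms cancel, leaving $\partial_t G = \int_{-h}^{\eta}\partial_t\phi\,dy - \eta\,\partial_t\psi$. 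Substituting into $S_2 = S_1 + \partial_t G$, the $-\int_{-h}^{\eta}\partial_t\phi\,dy$ appearing in $S_1$ is exactly cancelled, and one is left with precisely the asserted expression
\[
S_2 = -\eta\,\partial_t\psi - \tfrac{g}{2}\eta^2 + \kappa\Big(1-\tfrac{1}{\sqrt{1+\eta_x^2}}\Big) + \mez\int_{-h}^{\eta}(\phi_x^2-\phi_y^2)\,dy.
\]

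I do not expect a genuine obstacle here: once one spots that $I_1-I_2$ is an $x$-derivative, everything is formal and reduces to careful bookkeeping of the boundary terms produced by differentiating under the integral sign and of the ensuing cancellations. The only point that deserves a sentence of justification is that passing from $(I_1,S_1)$ to $(I_2,S_2)$ preserves the integrated identities, i.e. that $G$ and $\partial_t G$ decay fast enough at spatial infinity; this holds under the same Sobolev-type hypotheses that make $\mathcal M$ and the pair $(I_1,S_1)$ meaningful.
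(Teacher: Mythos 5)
Your proposal is correct: with $G=\int_{-h}^{\eta}\phi\,dy-\eta\psi$ one indeed has $I_1-I_2=\partial_x G$, and $S_2:=S_1+\partial_t G$ produces exactly the stated flux, the $\int_{-h}^{\eta}\phi_t\,dy$ terms cancelling as you describe, with the capillary term inherited unchanged from $S_1$. The paper gives no in-text argument (it defers to Lemma~2.3 of \cite{AIT}), and your derivation of $(I_2,S_2)$ from $(I_1,S_1)$ by an exact-derivative correction is essentially that same approach, here carried out with the surface-tension term already built into $S_1$.
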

\begin{proof}
The proof is identical to the one of Lemma~2.3~in~\cite{AIT}.
\end{proof}

To define the third pair we recall from \cite{AIT} two auxiliary functions defined inside the fluid domain. 
Firstly we introduce the stream function~$q$, 
which is the harmonic conjugate of $\phi$:
\begin{equation}\label{defi:qconj}
\left\{
\begin{aligned}
& q_x = -\phi_y , \quad \text{in }-h<y<\eta(t,x),\\
& q_y = \phi_x, \qquad \text{in }-h<y<\eta(t,x),\\
& q(t,x,-h) = 0.
\end{aligned}
\right.
\end{equation}
We also introduce the harmonic extension $\theta$ of $\eta$ with Dirichlet boundary condition on the bottom:
\begin{equation}\label{defi:thetainitial}
\left\{
\begin{aligned}
& \Delta_{x,y}\theta=0\quad \text{in }-h<y<\eta(t,x),\\
&\theta(t,x,\eta(t,x))=\eta(t,x),\\
&\theta(t,x,-h) = 0.
\end{aligned}
\right.
\end{equation}
Now the following lemma contains the key density/flux pair for the momentum. 
\begin{lemma}\label{l:I3}
The expression 
\[
I_3(t,x)= \int_{-h}^\eta 
\nabla \theta(t,x,y) \cdot\nabla q(t,x,y) \, dy
\]
is a density for the momentum, with associated density flux
\[
S_3(t,x)\defn  -\frac{g}{2}\eta^2 - \int_{-h}^{\eta(t,x)} \theta_y \phi_t \, dy   
 + \kappa\left(1 - \frac{1}{\sqrt{1+\eta_x^2}}\right)
+  \int_{-h}^{\eta(t,x)} \Big(\mez (\phi_x^2 - \phi_y^2)+ \theta_t \phi_y \Big)\, dy.
\]
\end{lemma}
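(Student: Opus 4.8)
The plan is to follow the computation carried out for pure gravity waves in \cite{AIT} and to locate the single step at which the capillary term enters; that step produces exactly the additional flux contribution $\kappa\bigl(1-(1+\eta_x^2)^{-1/2}\bigr)$, while everything else is $\kappa$-independent and can be quoted verbatim from there. The density identity $\mathcal M=\int_\xR I_3\,dx$ does not involve $\kappa$ and is as in \cite{AIT}: integrating in $x$ and using that $q$ is harmonic (being a harmonic conjugate of $\phi$), Green's identity gives
\[
\int_\xR I_3(t,x)\,dx=\iint_{\Omega(t)}\nabla\theta\cdot\nabla q\,dx\,dy=\int_{\Sigma(t)}\theta\,\partial_n q\,ds ,
\]
the contribution of the bottom $y=-h$ vanishing since $\theta=0$ there by \eqref{defi:thetainitial}. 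On $\Sigma(t)$ one has $\theta=\eta$, and since $\nabla q=(-\phi_y,\phi_x)$ is $\nabla_{x,y}\phi$ rotated by a right angle, $\partial_n q\,ds=\bigl(\evaluate{\phi_x}+\eta_x\evaluate{\phi_y}\bigr)\,dx=\psi_x\,dx$; hence the right-hand side is $\int_\xR\eta\psi_x\,dx=\mathcal M$.

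For the flux I would compute $\partial_t I_3$ by differentiating under the integral. Using \eqref{defi:qconj} write $\nabla\theta\cdot\nabla q=-\theta_x\phi_y+\theta_y\phi_x$, so that
\[
\partial_t(\nabla\theta\cdot\nabla q)=\bigl(\theta_y(\phi_t)_x-\theta_x(\phi_t)_y\bigr)+\bigl((\theta_t)_y\phi_x-(\theta_t)_x\phi_y\bigr),
\]
and each bracket is an exact divergence in $(x,y)$, e.g. $\theta_y(\phi_t)_x-\theta_x(\phi_t)_y=\partial_x(\theta_y\phi_t)-\partial_y(\theta_x\phi_t)$. Integrating in $y$ from $-h$ to $\eta$, keeping the boundary term $(\partial_t\eta)\,\evaluate{\nabla\theta\cdot\nabla q}$ coming from the moving upper limit, and using on $y=-h$ the conditions $\phi_y=0$ and $\theta=\theta_x=\theta_t=0$ together with the surface identities
\[
\evaluate{\theta_x}+\eta_x\evaluate{\theta_y}=\partial_x\bigl(\evaluate{\theta}\bigr)=\eta_x ,\qquad \evaluate{\theta_t}=(1-\evaluate{\theta_y})\,\partial_t\eta ,\qquad \evaluate{\phi_x}+\eta_x\evaluate{\phi_y}=\psi_x ,
\]
one obtains
\[
\partial_t I_3=(\partial_t\eta)\,\evaluate{\nabla\theta\cdot\nabla q}+\partial_x\!\int_{-h}^{\eta}\!\bigl(\theta_y\phi_t-\theta_t\phi_y\bigr)\,dy-\eta_x\,\evaluate{\phi_t}+\evaluate{\theta_t}\,\psi_x .
\]

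Now the dynamic equation — the only one in \eqref{systemT} carrying $\kappa$ — enters only through the trace $\evaluate{\phi_t}$, which by the Bernoulli equation and \eqref{pressure} equals
\[
\evaluate{\phi_t}=-\mez\evaluate{\la\nabla_{x,y}\phi\ra^2}-\evaluate{P}-g\eta=-\mez\evaluate{\la\nabla_{x,y}\phi\ra^2}+\kappa H(\eta)-g\eta .
\]
Feeding this in, the $\kappa$-free part of $-\eta_x\evaluate{\phi_t}$, together with $(\partial_t\eta)\,\evaluate{\nabla\theta\cdot\nabla q}$, $\evaluate{\theta_t}\psi_x$ and $\partial_x\!\int(\theta_y\phi_t-\theta_t\phi_y)\,dy$, reassembles — using now \eqref{eta}, the identity $\phi_{xx}=-\phi_{yy}$ and the three surface relations above — into exactly $-\partial_x$ of the $\kappa$-free part of $S_3$; this is precisely the gravity-wave computation of \cite{AIT}, and in particular the pressure is needed only through its trace, never in the bulk. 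The genuinely new term is
\[
-\eta_x\,\kappa H(\eta)=-\kappa\eta_x\,\partial_x\!\left(\frac{\eta_x}{\sqrt{1+\eta_x^2}}\right)=\kappa\,\partial_x\frac{1}{\sqrt{1+\eta_x^2}}=-\partial_x\!\left[\kappa\left(1-\frac{1}{\sqrt{1+\eta_x^2}}\right)\right] ,
\]
which is exactly $-\partial_x$ of the asserted extra flux term — the same elementary identity already used above in the treatment of $S_1$. Collecting everything yields $\partial_t I_3+\partial_x S_3=0$ with $S_3$ as stated.

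The main obstacle is not conceptual: it is the lengthy but routine bookkeeping of the boundary contributions on $\Sigma(t)$ and on $y=-h$ when integrating the exact divergences in $y$, and the verification that the $\kappa$-free remainder cancels exactly as in the gravity case. Since that part is literally the computation of \cite{AIT}, I would quote it and display only the one extra line above. The sole mild subtlety is that $\theta_t$ must be read as the harmonic function with boundary data $(1-\evaluate{\theta_y})\,\partial_t\eta$ on $\Sigma(t)$ and $0$ on $y=-h$, which is what legitimizes differentiating the relations of \eqref{defi:thetainitial} in time.
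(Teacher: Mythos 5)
Your proposal is correct and takes essentially the same route as the paper: the paper simply declares the proof identical to the $\kappa=0$ computation of Lemma~2.4 in \cite{AIT}, with the surface tension entering only through the pressure trace in the Bernoulli equation and producing the extra flux term via the identity $-\kappa\eta_x\partial_x\bigl(\eta_x/\sqrt{1+\eta_x^2}\bigr)=-\partial_x\bigl[\kappa\bigl(1-1/\sqrt{1+\eta_x^2}\bigr)\bigr]$, exactly as you isolate it (and as is carried out explicitly for $S_1$ earlier in the section). Your write-up just makes this implicit modification explicit, which is consistent with the paper's argument.
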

\begin{proof}
The proof is identical to the one of Lemma~2.4~in~\cite{AIT}.
\end{proof}

In our analysis we will not only need the evolution equations restricted to the free boundary,
but also the evolution of $\theta$ and $\phi$ within the fluid domain.
To describe that, we introduce the operators
$\HD$ and $\HN$, which act 
on functions on the free surface 
$\{y=\eta(t,x)\}$ and produce 
their harmonic extension inside 
the fluid domain with zero Dirichlet, respectively Neumann
boundary condition\footnote{These two operators coicide in the infinite depth setting.} on the bottom $\{y=-h\}$. 
With these notations, we have
\[
\theta = \HD(\eta), \qquad \phi= \HN(\psi).
\]
Recall that, given a function $f=f(t,x,y)$, we set 
$\evaluate{f}(t,x):=f(t,x,\eta(t,x))$.
\begin{lemma}
The function $\phi_t$ is harmonic within the fluid domain, 
with Neumann boundary condition on the bottom, and satisfies 
\begin{equation}\label{phit}
\phi_t = -g \HN(\eta) -  \HN\left(\evaluate{|\nabla \phi|^2}\right) + \kappa \HN( \partial_x(\eta_x/\sqrt{1+\eta_x^2})      ).
\end{equation}
The function $\theta_t$ is harmonic within the fluid domain, 
with Dirichlet boundary condition on the bottom, and satisfies
\begin{equation}\label{thetat}
\theta_t = \phi_y -  \HD\left(\evaluate{\nabla \theta \cdot\nabla \phi}\right).
\end{equation}
\end{lemma}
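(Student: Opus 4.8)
The plan is to handle the two assertions in parallel. In each case I would first argue that the function ($\phi_t$, resp.\ $\theta_t$) is harmonic in the fluid domain with the stated boundary condition on the bottom, so that by the very definition of $\HN$ and $\HD$ it equals the corresponding harmonic extension of its own trace on the free surface; then it only remains to compute that trace. For the harmonicity, note that $\Delta_{x,y}=\px^2+\py^2$ has constant coefficients, so differentiating $\Delta_{x,y}\phi=0$ and $\Delta_{x,y}\theta=0$ in time gives $\Delta_{x,y}\phi_t=0$ and $\Delta_{x,y}\theta_t=0$ in $\Omega(t)$; differentiating the bottom conditions $\phi_y|_{y=-h}=0$ from \eqref{t5} and $\theta|_{y=-h}=0$ from \eqref{defi:thetainitial} in time gives $(\phi_t)_y|_{y=-h}=0$ and $\theta_t|_{y=-h}=0$. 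Hence $\phi_t=\HN(\evaluate{\phi_t})$ and $\theta_t=\HD(\evaluate{\theta_t})$.

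For the first identity, I would evaluate the Bernoulli equation, i.e.\ the second line of \eqref{t5}, on the free surface $y=\eta(t,x)$ and substitute the pressure condition \eqref{pressure}; this expresses $\evaluate{\phi_t}$ explicitly in terms of $\eta$, of $\evaluate{|\nabla_{x,y}\phi|^2}$, and of the curvature $\px(\eta_x/\sqrt{1+\eta_x^2})$. Applying $\HN$ and using its linearity then yields \eqref{phit}.

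For the second identity, the key observation I would use is that $\phi_y$ is already harmonic in $\Omega(t)$ and, by the solid-wall condition $\phi_y|_{y=-h}=0$, has zero Dirichlet data on the bottom; hence $\phi_y=\HD(\evaluate{\phi_y})$. Therefore the right-hand side of \eqref{thetat} equals $\HD\bigl(\evaluate{\phi_y}-\evaluate{\nabla\theta\cdot\nabla\phi}\bigr)$, and since $\theta_t=\HD(\evaluate{\theta_t})$ it suffices to prove the scalar identity on the free surface
\[
\evaluate{\theta_t}=\evaluate{\phi_y}-\evaluate{\theta_x}\,\evaluate{\phi_x}-\evaluate{\theta_y}\,\evaluate{\phi_y}.
\]
To obtain it I would differentiate the boundary relation $\theta(t,x,\eta(t,x))=\eta(t,x)$ from \eqref{defi:thetainitial}: in $t$ this gives $\evaluate{\theta_t}=\eta_t\,(1-\evaluate{\theta_y})$, and in $x$ it gives $\evaluate{\theta_x}+\eta_x\evaluate{\theta_y}=\eta_x$, i.e.\ $\eta_x(1-\evaluate{\theta_y})=\evaluate{\theta_x}$. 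Substituting the kinematic equation $\eta_t=\evaluate{\phi_y}-\eta_x\evaluate{\phi_x}$ from \eqref{eta} into the first relation, expanding, and then using the second relation to replace $\eta_x(1-\evaluate{\theta_y})$ by $\evaluate{\theta_x}$ produces precisely the displayed identity, hence \eqref{thetat}.

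There is no serious obstacle here; the computation is elementary once the right structure is in place. The only point that is not automatic is recognizing, in \eqref{thetat}, that the harmonic object to be extended by $\HD$ is $\phi_y$ — the component of $\nabla\phi$ with vanishing bottom trace — rather than some other first derivative; this is exactly what makes the trace identity above close up. The remaining care is routine bookkeeping: justifying the commutation of $\partial_t$ with $\Delta_{x,y}$ on the moving domain and the chain-rule differentiations of traces along the free boundary, both legitimate for the smooth solutions under consideration.
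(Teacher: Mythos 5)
Your proposal is correct and follows essentially the same route as the paper, which simply notes that the $\phi_t$ equation follows from evaluating the Bernoulli equation on the free surface (with the pressure condition \eqref{pressure}) and that the $\theta_t$ equation is obtained, as in the earlier gravity-wave paper, by differentiating the boundary relation $\theta|_{y=\eta}=\eta$ in $t$ and $x$ and inserting the kinematic equation \eqref{eta}. One small remark: carrying out your Bernoulli computation literally produces the coefficient $-\tfrac12 \HN\bigl(\evaluate{|\nabla\phi|^2}\bigr)$, so the factor $\tfrac12$ is missing in the displayed formula \eqref{phit} (a typo in the statement, as the later error term $Err_2$ with its prefactor $\tfrac{1-\sigma}{2}$ confirms), and your derivation is the correct version.
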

\begin{proof}
The equation for $\phi$ follows directly from the Bernoulli equation. The equation for $\theta$
is as in \cite[Lemma~2.5]{AIT}.
\end{proof}


\section{Local energy decay for gravity-capillary waves} \label{s:nonlinear}

In this section we  prove our main result in Theorem~\ref{ThmG1}, modulo three results (see Lemmas~\ref{l:fixed-t},~\ref{l:kappa-diff},~\ref{l:kappa-3}),  whose proofs are in turn relegated to the last section of the paper.

We begin with a computation similar to \cite{AIT}. 
We use the density-flux pairs $(I_2,S_2)$ and $(I_3,S_3)$ introduced in the previous section. Given $\sigma\in (0,1)$ to be chosen later on, we set
\[
\mI_m^\sigma(t) = \int_\R m(x)( \sigma I_2(x,t) + (1-\sigma) I_3(x,t))\, dx .
\]
Since $\partial_t I_j+\partial_x S_j=0$, by integrating by parts, 
we have 
\[
\partial_t \mI_m^\sigma(t) =  \int_\R m_x ( \sigma S_2(x,t) + (1-\sigma) S_3(x,t)) \, dx.
\]
Consequently, to prove Theorem~\ref{ThmG1}, it is sufficient 
to establish the following estimates:
\begin{description}
\item[(i)] Fixed time bounds,
\begin{equation}\label{I2-bound}
\left|   \int_\R m(x) I_2 \, dx  \right| \lesssim 
\| \eta\|_{ g^{-\frac14}H^{\frac14}_h  \cap  
\, \kappa^{-\frac14} H^{\frac34}_h}
\| \psi_x\|_{g^{\frac14}H^{-\frac14}_h + \kappa^{\frac14}H^{-\frac34}_h},
\end{equation}
\begin{equation}\label{I3-bound}
\left|   \int_\R m(x) I_3 \, dx \right| 
\lesssim \| \eta\|_{g^{-\frac14}H^{\frac14}_h \cap \kappa^{-\frac14}H^{\frac34}_h} 
\|  \psi_x\|_{g^{\frac14}H^{-\frac14}_h+\kappa^{\frac14}H^{-\frac34}_h}.
\end{equation}

\item[(ii)] Time integrated bound; the goal is to prove that there exist 
$\sigma \in (0,1)$  and $c < 1$ such that
\begin{equation} \label{S23-bound}
\int_{0}^T \int_\R m_x ( \sigma S_2(t) + (1-\sigma) S_3(t))\, dx dt \gtrsim 
\| (\eta,\psi)\|_{LE^\kappa_{0}}^2 - c \| (\eta,\psi)\|_{LE^\kappa}^2,
\end{equation}
where the local norms are as defined in~\eqref{defiLEkappa}.
\end{description}
We now successively discuss the two sets of bounds above.

\bigskip

\textbf{(i) The fixed time bounds \eqref{I2-bound}-\eqref{I3-bound}.} These are similar to \cite{AIT}, except that in this case, one needs to take into account the threshold frequency $\lambda_{0}=\sqrt{g/\kappa}$. 
Since $I_2=\eta \psi_x$, by a duality argument, 
to prove the bound in \eqref{I2-bound}
it suffices to show that 
\begin{equation}
    \label{algebra}
    \Vert m\eta  \Vert_{g^{\frac{1}{4}}H^\frac{1}{4}_h\cap \kappa^{-\frac{1}{4}}H^{\frac{3}{4}}_h } \lesssim \Vert \eta  \Vert_{g^{\frac{1}{4}}H^\frac{1}{4}_h\cap \kappa^{-\frac{1}{4}}H^{\frac{3}{4}}_h }.
\end{equation}
The $H_h^\frac14$ bound was already proved in \cite{AIT}, so it remains to establish the $H^\frac34_h$ bound. For this we use 
a paradifferential decomposition of the product $m\eta$:
\[
m\eta =T_{m}\eta +T_{\eta}m +\Pi (m,\eta),
\]
and estimate each term separately. Here $T_{m}\eta$ represents the multiplication  between the low frequencies of $m$ and the high frequencies of $\eta$, and $\Pi(m, \eta)$ is the \emph{high-high} interaction operator. 

We begin with the first term and note that since we are 
estimating the high frequencies in  $\eta$, we do not have to 
deal with the $\lambda_0$ frequency threshold.
\[
\begin{aligned}
\Vert T_{m}\eta\Vert^2_{\dot{H}^{\frac{3}{4}} }&\lesssim \sum_{\lambda} \lambda^{\frac{3}{2}} \Vert m_{<\lambda}\eta_{\lambda}\Vert_{L^2}^2\lesssim  \sum_{\lambda}\lambda^{\frac{3}{2}}\Vert m_{< \lambda}\Vert^2_{L^{\infty}} \Vert \eta_{\lambda}\Vert_{L^2}^2 \lesssim \Vert m\Vert^2_{L^{\infty}}\sum_{\lambda} \lambda ^{\frac{3}{2}}\Vert \eta_{\lambda}\Vert_{L^2}^2 \\ &\lesssim \Vert m\Vert ^2_{L^{\infty}}\Vert \eta \Vert_{\dot{H}^{\frac{3}{4}}}.
\end{aligned}
\]

For the second term, we only need to bound \eqref{algebra} only at frequencies $\lambda$ with  $ \lambda_0 <\lambda $, as the case $\lambda_0 \geq \lambda $ follows as in \cite{AIT}. For this we rewrite $T_{\eta}m$
\[
T_{\eta}m= \sum_{\lambda } \eta_{<\lambda} m_{\lambda},
\]
and then, due to the fact that we are not adding all frequencies (only the ones above $\lambda_0$), we get
\[
\Vert T_{\eta}m \Vert_{\dot{H}^{\frac{3}{4}}} \lesssim  \sum_{\lambda_0<\lambda}\Vert  \eta_{<\lambda}m_{\lambda} \Vert^2_{\dot{H}^{\frac{3}{4}}},
\]
and for each term we estimate using Plancherel and Bernstein's inequalities  
\[
\begin{aligned}
\sum_{ \lambda_0< \lambda } \Vert \eta_{<\lambda } m_{\lambda} \Vert^2_{\dot{H}^{\frac{3}{4}}} &\lesssim \sum_{\lambda_{0}<\lambda }   \lambda ^{\frac{3}{2}}\Vert  \eta_{<\lambda} m_{\lambda}\Vert_{L^2}^2\\ 
&\lesssim \sum_{\lambda_{0}<\lambda }   \lambda ^{\frac{3}{2}}\Vert  \eta_{<\lambda} \Vert^2_{L^4} \Vert m_{\lambda}\Vert_{L^4}^2 \\ 
&\lesssim \sum_{\lambda_{0}<\lambda }   \lambda ^{\frac{3}{2}}\Vert  \eta \Vert^2_{\dot{H}^{\frac{1}{4}}} \lambda^{\frac{3}{2}}\Vert m_{\lambda}\Vert_{L^1}^2 \\
&\lesssim \sum_{\lambda_{0}<\lambda } \lambda^{-1}  \Vert  \eta \Vert^2_{\dot{H}^{\frac{1}{4}}} \Vert m_{xx}\Vert_{L^1}^2.
\end{aligned}
\]
The summation over $\lambda$ is trivial. Finally, the bound for the final term  is obtained in a similar fashion. 

To obtain the second 
bound in \eqref{I3-bound}, we begin by transforming $I_3$. Firstly, by definition 
of $I_3$ and $q$, we have 
\[
\int_\R m I_3 \, dx =\iint_{\Omega(t)} m(\theta_{y} \phi_x -\theta_{x} \phi_y)\, dy dx = \iint_{\Omega(t)} m\left( \partial_y (\theta \phi_x) -\partial_{x}(\theta \phi_y)\right)\, dydx.
\]

Now we have
\[
\iint_{\Omega(t)} m \partial_y (\theta \phi_x)\, dydx=\int_\R m (\theta \phi_x)\arrowvert_{y=\eta}\, dx.
\]
On the other hand, integrating by parts in $x$, we get
\[
\iint_{\Omega(t)} m \partial_{x}(\theta \phi_y)\, dydx=
-\iint_{\Omega(t)} m_x \theta \phi_{y}\, dy dx-\int_\R \eta_x m (\theta \phi_y)\arrowvert_{y=\eta}\, dx.
\]
Consequently,
\[
\int_\R m I_3 \, dx=\int_\R m (\theta \phi_x+\eta_x\theta\phi_y)\arrowvert_{y=\eta}\, dx
+\iint_{\Omega(t)} m_x \theta \phi_{y}\, dy dx.
\]
Now, by definition of $\theta$ one has $\theta\arrowvert_{y=\eta}=\eta$. 
Since $\phi_y=-q_x$ and since $(\phi_x+\eta_x\phi_y)\arrowvert_{y=\eta}=\psi_x$, we end up with
\[
\int_\R m I_3 \, dx=\int_\R mI_2\, dx-\iint_{\Omega(t)}m_x \theta q_x \, dydx.
\]

It remains to estimate the second part. This is a more delicate bound, which requires
the use of holomorphic coordinates and is postponed for the last section of the paper. We state
the desired bound as follows:
\begin{lemma}\label{l:fixed-t}
The following fixed estimate holds:
\begin{equation} \label{fixed-t}
\left | \iint_{\Omega} m_x \theta q_{x}\, dy dx \right | \lesssim   
\| \eta\|_{g^{-\frac{1}{4}}H^{\frac14}_h \cap \kappa^{-\frac{1}{4}}H^{\frac34}_h} \| \psi_x\|_{ g^{\frac{1}{4}}H^{-\frac14}_h+ \kappa^{\frac{1}{4}}H^{-\frac34}_h}.
\end{equation}
\end{lemma}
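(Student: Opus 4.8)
\emph{Proof strategy.} The plan is to carry out this fixed-time estimate entirely in holomorphic coordinates. Let $Z\colon\Strip\to\Omega$ be the conformal parametrization of the fluid domain, with $\Strip=\{\alpha+i\beta:-h<\beta<0\}$, and write $W:=Z-\alpha$ for the conformal displacement, so that $\|W\|_{L^\infty}+\|W_\alpha\|_{L^\infty}\lesssim\epsilon_0$ under the smallness hypothesis. Since $\theta=\HD(\eta)$ is harmonic in $\Omega$ with data $\eta$ on the free surface and $0$ on the bottom, and $\phi+iq$ is holomorphic in $\Omega$ with $q=0$ on the bottom, the pullbacks $\tilde\theta:=\theta\circ Z$ and $\tilde q:=q\circ Z$ are harmonic in the \emph{flat} strip $\Strip$, both vanish on $\{\beta=-h\}$, and their traces on $\{\beta=0\}$ are, respectively, the conformal transport of $\eta$ and (as $\tilde q$ and $\tilde\phi:=\phi\circ Z$ are conjugate harmonic functions on the strip) a depth-dependent Hilbert-type multiple of the conformal transport of $\psi$; these identifications, and the equivalence of the fixed-time $H^s_h$ norms between the two settings, are supplied by Sections~\ref{ss:laplace} and~\ref{s:switch}. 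Using $q_x=\IM\big((\tilde\phi_\alpha+i\tilde q_\alpha)/Z_\alpha\big)$ together with the Jacobian $|Z_\alpha|^2$, the conformal change of variables turns the left-hand side of \eqref{fixed-t} into
\[
\iint_\Omega m_x\,\theta\,q_x\,dx\,dy=\iint_\Strip\mu\,\tilde\theta\,\IM\!\big((\tilde\phi_\alpha+i\tilde q_\alpha)\,\overline{Z_\alpha}\big)\,d\alpha\,d\beta,\qquad\mu:=m_x\circ\RE Z,
\]
where, because $\RE Z=\alpha+O(\epsilon_0)$ and $\overline{Z_\alpha}=1+O(\epsilon_0)$, the weight $\mu$ is still bump-like with rapidly decaying Fourier transform.

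I would then perform the vertical integration in $\beta$. Both $\tilde\theta$ and $\tilde q$ are the harmonic extensions into the strip of their traces on $\{\beta=0\}$ with zero data on $\{\beta=-h\}$, given at frequency $\lambda$ by the explicit profiles $\sinh(\lambda(\beta+h))/\sinh(\lambda h)$ and $\sinh(\lambda(\beta+h))/\cosh(\lambda h)$; hence $\int_{-h}^0\tilde\theta\,\tilde q_\alpha\,d\beta$ is an explicit Fourier multiplier in $\alpha$ with uniformly bounded symbol (of size $\sim\min(1,\lambda h)^2$, so $O(1)$ for $\lambda\gtrsim h^{-1}$ and vanishing quadratically as $\lambda h\to0$). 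Modulo commutator errors arising because $\mu$ depends on $\beta$ as well — these are lower order and absorbed using the smallness of $W$ — this reduces \eqref{fixed-t} to a one-dimensional bilinear bound
\[
\Big|\int_\R\mu\,\eta^\flat\,(\mathcal{L}\psi^\flat)\,d\alpha\Big|\lesssim\|\eta\|_{g^{-\frac14}H^{\frac14}_h\cap\kappa^{-\frac14}H^{\frac34}_h}\,\|\psi_x\|_{g^{\frac14}H^{-\frac14}_h+\kappa^{\frac14}H^{-\frac34}_h},
\]
where $\eta^\flat,\psi^\flat$ denote the holomorphic traces of $\eta,\psi$ and $\mathcal{L}$ is a bounded Fourier multiplier.

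The last step is the multilinear analysis of this one-dimensional form. Expanding $\eta^\flat$, $\psi^\flat$, the correction to $\mathcal{L}$, and the weight $\mu$ in powers of $W$ produces a leading term bilinear in $(\eta,\psi)$ together with a tail of higher multilinearity which is summed using $\epsilon_0\ll1$, each term satisfying the same estimate. For a given term I would decompose in Littlewood--Paley, use the rapid decay of $\widehat\mu$ to restrict to near-diagonal frequency interactions, and split at the two thresholds $h^{-1}$ and $\lambda_0=\sqrt{g/\kappa}$. For $\lambda\lesssim\lambda_0$ the estimate coincides with the one in \cite{AIT}: there the $\psi_x$-norm reduces to $g^{\frac14}H^{-\frac14}_h$ and the $\eta$-norm only improves on the one used there, and in particular the delicate range $\lambda\lesssim h^{-1}$, where the flat bottom produces genuinely nonlocal effects, was already handled. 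For $\lambda\gtrsim\lambda_0$, which is the new regime, surface tension dominates: the $\eta$-norm becomes the stronger $\kappa^{-\frac14}H^{\frac34}_h\sim\kappa^{-\frac14}\dot H^{\frac34}$, $\psi_x$ is measured in $\kappa^{\frac14}H^{-\frac34}_h\sim\kappa^{\frac14}\dot H^{-\frac34}$, the multiplier $\mathcal{L}$ is $O(1)$, and the product of the two frequency weights is $\sim1$, so a Cauchy--Schwarz in the frequency parameter closes the bound (with room to spare at low frequencies).

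\emph{Main obstacle.} The principal difficulty is to make the reduction to the one-dimensional bilinear form rigorous uniformly in $h\ge1$ and $\kappa\ll g$ while carrying along the full nonlinear dependence on $\eta$: the conformal change of variables turns $\theta q_x$ into an infinite multilinear series in $W$, and every term must obey \eqref{fixed-t} with a constant independent of the parameters, the transition at $\lambda_0$ and the regime $\lambda\lesssim h^{-1}$ being where this is tightest. A secondary, more technical obstacle is that $\mu=m_x\circ\RE Z$ is not a function of $\alpha$ alone, so Plancherel does not apply directly; commuting $\mu$ past the vertical-integration multiplier and past the depth-dependent conjugation operator relating the trace of $\tilde q$ to $\psi^\flat$ is where the compact support of $m_x$ — hence the rapid decay of its Fourier transform — together with the smallness of $W$, are used repeatedly.
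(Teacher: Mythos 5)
Your overall strategy (pass to holomorphic coordinates, exploit the explicit harmonic extensions in the flat strip, split at the two thresholds $h^{-1}$ and $\lambda_0=\sqrt{g/\kappa}$, and reuse \cite{AIT} below $\lambda_0$) is the same as the paper's, and your low-frequency counting is essentially the paper's own: at depth $|\beta|\approx\lambda^{-1}$ the extensions select frequencies $\lesssim\lambda$, and the $\lambda^{-1}$ measure of the layer plus Bernstein give exactly the gain you encode in your multiplier $\mathcal L$. The first genuine gap is the reduction to a one-dimensional bilinear form by integrating out $\beta$ with the weight frozen. The weight $\mu=m_x\circ\Re Z$ depends on $\beta$ in an essential way: by Proposition~\ref{p:switch-strips} and its corollary, the Eulerian vertical strip drifts horizontally by $O(\epsilon_0|\beta|)$ in the holomorphic frame, and since the estimate must be uniform in $h\ge 1$, at depths $|\beta|\gg\epsilon_0^{-1}$ this drift exceeds the unit width of $m_x$, so $\mu(\cdot,\beta)$ and $\mu(\cdot,0)$ can have essentially disjoint supports. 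Dismissing this as ``commutator errors \dots absorbed using the smallness of $W$'' is therefore not valid: the discrepancy is as large as the main term, and controlling it forces exactly the depth-localized analysis you were trying to bypass. This is why the paper never freezes the weight: it keeps the two-dimensional integral, covers the drifted strip by the regions $A_1(x_0)$ and $A_\lambda(x_0)$, whose width $\lambda^{-1}$ dominates the drift $\epsilon_0\lambda^{-1}$, and estimates each dyadic depth layer by local $L^\infty$/Bernstein bounds.

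The second gap is at high frequency. Your plan is to expand in powers of $W$ and sum using $\epsilon_0\ll1$, with only $\|W\|_{L^\infty}+\|W_\alpha\|_{L^\infty}\lesssim\epsilon_0$ as input. But the $\kappa^{-\frac14}H^{\frac34}_h$ component of the $\eta$-norm requires, when multiplying by the Jacobian and the conformal factors ($J$, $1/Z_\alpha$, $\bar Z_\alpha$), control of $3/4$ of a derivative of those factors above $\lambda_0$; concretely one needs a bound of the type \eqref{algebra1}, with the factor measured in $\|f\|_{L^\infty}+(\kappa/g)^{\frac14}\|f_\alpha\|_{L^2}$. For $f$ built from $W_\alpha$ this is precisely the transferred $X_1$ control $(\kappa/g)^{\frac14}\|W_{\alpha\alpha}\|_{L^2}\lesssim\epsilon$ of Theorem~\ref{t:equiv}, which your proposal never invokes; an $L^\infty$ bound on $W_\alpha$ gives no handle on these terms, so ``each term satisfies the same estimate'' does not close in the regime $\lambda\gtrsim\lambda_0$. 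The paper's treatment of the unit-depth piece is exactly a duality plus paraproduct argument, using \eqref{algebra} and an \eqref{algebra1-dual}-type bound together with $(\kappa/g)^{\frac14}\|J_\alpha\|_{L^2}\lesssim\epsilon$, and the conversion between $(\eta,\psi_x)$ and $(\Im W,\Im R)$ norms is supplied by Lemma~\ref{l:equiv-M}; both ingredients need to be made explicit in your argument for it to be complete.
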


\bigskip
\textbf{ (ii) The time integrated bound \eqref{S23-bound}.}
We take $\sigma <1/2$, but close to
$1/2$. Using the expressions in Lemmas ~\ref{l:I2},~\ref{l:I3} 
as well as the relations \eqref{phit} and \eqref{thetat}
we write the integral in \eqref{S23-bound} as a combination of two
leading order terms plus error terms
\[
\int_{0}^T \int_\R m_x ( \sigma S_2(t) + (1-\sigma) S_3(t))\, dx dt = LE_{\psi} + g LE_\eta 
+ \kappa LE_\eta^\kappa
+ Err_1 + g Err_2 + Err_3,
\]
where   
\[
LE_\psi := \frac12 \int_0^T \iint_{\Omega(t)}  m_x [ \sigma(\phi_x^2 - \phi_y^2) + (1-\sigma) |\nabla \phi|^2] \, dx dy dt
\]
\[
LE_\eta := \int_0^T \left(\frac{\sigma}{2}  \int_\R m_x  \eta^2 \, dx - (1-\sigma) \iint_{\Omega(t)} m_x \theta_y ( \theta-\HN(\eta))\, dx dy\right) dt  ,
\]
\[
 LE_\eta^\kappa = \int_0^T \!\! \left(\int_\R m_x \left(  1 - \frac{1}{\sqrt{1+\eta_x^2}}
 - \sigma \eta H(\eta) \right)  dx- (1-\sigma) 
\iint_{\Omega(t)}\!\! m_x \theta_y H_N(H(\eta)) dy dx \!\right)\! dt ,
\]
and finally 
\[
\begin{aligned}
&Err_1 :=  \sigma \int_0^T \int_\R  m_x \eta \mN(\eta) \psi \,  dx dt ,\\
&Err_2 := \frac{1-\sigma}{2} \int_0^T \iint_{\Omega(t)} m_x 
\theta_y \HN(|\nabla \phi|^2) \, dx dy dt ,\\
&Err_3 :=  \frac{1-\sigma}{2} 
\int_0^T \iint_{\Omega(t)} m_x \phi_y \HD(\nabla \theta \nabla \phi)\,  dx dy dt .
\end{aligned}
\]

The terms which do not involve the surface tension have already been estimated 
in \cite{AIT}. We recall the outcome here:

\begin{proposition}[\cite{AIT}]
The following estimates hold:

(i) Positivity estimates: 
\begin{equation}
LE_\psi+ LE_\eta  \gtrsim \| (\eta,\psi)\|_{LE^\kappa_{0}}^2 - c \| (\eta,\psi)\|_{LE^\kappa}^2.
\end{equation}

(ii) Error bounds:
\begin{equation}
|Err_1|  + |Err_2 | \lesssim \epsilon LE(\eta,\psi).
\end{equation}

(iii) Normal form correction in holomorphic coordinates:
\begin{equation}
| Err_3 | \lesssim \epsilon \left( LE(\eta,\psi) + \| \eta(0)\|_{H^{\frac14}_h} \| \psi_x(0)\|_{H^{-\frac14}_h}
+ \| \eta(T)\|_{H^{\frac14}_h} \| \psi_x(T)\|_{H^{-\frac14}_h}\right).
\end{equation}

\end{proposition}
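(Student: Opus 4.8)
The plan is to observe that none of the quantities $LE_\psi$, $LE_\eta$, $Err_1$, $Err_2$, $Err_3$ involves the surface tension coefficient $\kappa$: each is assembled from $\eta$, $\psi$, the harmonic functions $\phi$, $\theta$, $q$ and the extension operators $\HN$, $\HD$ in precisely the same way as in the pure gravity problem. Hence the proof reduces to matching each of these three groups of terms with its counterpart in \cite{AIT} and invoking the estimates established there; no new phenomenon arises. Below I recall the structure of those estimates and indicate which one carries the real difficulty.

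For part (i), the positivity of $LE_\psi$ rests on the pointwise identity, valid since $\sigma<1/2$,
\[
\sigma(\phi_x^2-\phi_y^2)+(1-\sigma)|\nabla\phi|^2=\phi_x^2+(1-2\sigma)\phi_y^2\ge 0 .
\]
Choosing the multiplier $m$ increasing with $m_x\gtrsim\chi(\cdot-x_0)$ on $\supp\chi$ and with $m_x$ uniformly bounded and of unit spatial scale, this term dominates the local kinetic energy centered at $x_0$, while the contribution of $m_x$ away from $x_0$ is in turn controlled by the supremum over translates, which produces the admissible loss $-c\|(\eta,\psi)\|_{LE^\kappa}^2$. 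For the term $LE_\eta$ one uses that $\theta-\HN(\eta)$ is harmonic and has vanishing trace on the free surface, so that integrating by parts turns $-\iint m_x\theta_y(\theta-\HN(\eta))$ into a nonnegative quadratic form in $\eta$; combined with the $\frac{\sigma}{2}\int m_x\eta^2$ piece this recovers the local potential energy, again up to a small multiple of $\|(\eta,\psi)\|_{LE^\kappa}^2$.

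For part (ii), the terms $Err_1$ and $Err_2$ are at least cubic in the solution: $Err_1$ carries the remainder operator $\mN$ applied to $\eta$ and paired against $\psi$, while $Err_2$ is of the form $\eta$ times $|\nabla\phi|^2$ integrated against $m_x\theta_y$. Hence the smallness hypothesis \eqref{uniform}, together with the pointwise bounds $|\eta|\lesssim\epsilon_0 h$ and $|\eta_x|\lesssim\epsilon_0$, bounds each of them by $\epsilon$ times the local energy. The main obstacle, and the reason part (iii) is isolated, is that $Err_3=\frac{1-\sigma}{2}\iint m_x\phi_y\,\HD(\nabla\theta\cdot\nabla\phi)$ is only formally cubic: one of its factors originates from $\nabla\theta$, which has the size of $\eta$ with no derivative to spare, so the integral is not directly absorbed into $\epsilon\cdot LE$. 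The remedy is a normal form correction: using the evolution equations \eqref{phit} and \eqref{thetat} one integrates by parts in time, rewriting $Err_3$ as a time derivative of a cubic expression plus genuinely perturbative remainders. The boundary contributions produced at $t=0$ and $t=T$ are exactly the source of the extra $\|\eta(0)\|_{H^{\frac14}_h}\|\psi_x(0)\|_{H^{-\frac14}_h}+\|\eta(T)\|_{H^{\frac14}_h}\|\psi_x(T)\|_{H^{-\frac14}_h}$ on the right-hand side. Since the bilinear and multilinear bookkeeping underlying this step is far more transparent in holomorphic coordinates, it is carried out there; this is the technically delicate part of the argument, but being insensitive to $\kappa$ it is imported directly from \cite{AIT}.
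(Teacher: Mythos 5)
Your proposal takes essentially the same route as the paper: since $LE_\psi$, $LE_\eta$, $Err_1$, $Err_2$ and $Err_3$ contain no $\kappa$, the proposition is precisely the corresponding statement of \cite{AIT} and is imported wholesale, which is all the paper itself does. One caveat on your sketch of part (iii): you describe the normal form correction as integrating by parts in time using both \eqref{phit} and \eqref{thetat}, but \eqref{phit} contains the surface tension term, so if the correction genuinely used that equation the bound could not be quoted from \cite{AIT} without estimating new $\kappa$-dependent remainders; the paper's point is exactly that the correction uses only the $\eta$ (kinematic) equation, which is $\kappa$-free, and this is what legitimizes importing the gravity-wave estimate for $Err_3$ verbatim.
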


We recall that the bound for $Err_3$ is more complex because, rather than estimating it directly,
in \cite{AIT} we use a normal form correction to deal with the bulk of $Err_3$, and estimate 
directly only the ensuing remainder terms. Fortunately the normal form correction only 
uses the $\eta$ equation, and thus does not involve at all the surface tension.

Thus, in what follows our remaining task is to estimate the contribution of $LE_\eta^\kappa$,
which we describe in the following 

\begin{proposition}\label{p:kappa}
The following estimate holds:
\begin{equation}
LE_\eta^\kappa \gtrsim \int_\R m_x \eta_x^2 dx - \epsilon   LE^\kappa.
\end{equation}
\end{proposition}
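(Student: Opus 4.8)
The plan is to analyze the three pieces of $LE_\eta^\kappa$, extract the leading quadratic part, and absorb the rest into $\epsilon LE^\kappa$. First I would observe that the scalar integrand in the first term expands as
\[
1 - \frac{1}{\sqrt{1+\eta_x^2}} - \sigma \eta H(\eta) = \tfrac12 \eta_x^2 - \sigma \eta \partial_x\!\left(\frac{\eta_x}{\sqrt{1+\eta_x^2}}\right) + O(\eta_x^4),
\]
and that, after integrating the $\eta H(\eta)$ term by parts against $m_x$, one gets $\sigma \eta_x^2/\sqrt{1+\eta_x^2}$ plus a commutator term $\sigma \eta \eta_x \partial_x(m_x/\sqrt{1+\eta_x^2})$ that is bounded by $\| m_{xx}\|_{L^\infty}$ times a quadratic expression in $\eta$ supported on the window, hence controllable by $\epsilon LE^\kappa$ using the $X_1$ bound $\| \eta\|_{L^\infty_t H^2}\lesssim \epsilon (\kappa/g)^{-1/4}$ (this is exactly where the high-frequency strengthening of the control norm enters). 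Collecting, the scalar part contributes
\[
\int_0^T \int_\R m_x \left( (\tfrac12 - \sigma) \eta_x^2 + \text{(lower order)} \right) dx\, dt,
\]
and since $\sigma < 1/2$ the leading coefficient $\tfrac12 - \sigma$ is a fixed positive constant, which is what we want up to the $\int_\R m_x \eta_x^2$ normalization in the statement (modulo renaming constants).

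The second, nonlocal term $-(1-\sigma)\iint_{\Omega(t)} m_x \theta_y H_N(H(\eta))\,dy\,dx$ is the genuinely difficult piece, and I expect this to be the main obstacle. The idea is that $H_N(H(\eta))$ is, to leading order in frequency, comparable to $-|D|^3 \eta$ acting on the high frequencies (recall $H(\eta) \approx -\eta_{xx}$ and the Dirichlet–Neumann/harmonic extension contributes another half derivative from the vertical integration), while $\theta_y$ restricted near the surface behaves like $|D|\eta$; so the bulk term is morally $(1-\sigma)\int m_x \, (|D|^{3/2}\eta)^2$, which has the right sign and the right size to combine with the scalar contribution. Making this precise in Eulerian coordinates seems hopeless because of the domain dependence, so — consistently with the strategy announced in the introduction and with the treatment of $LE_\psi, LE_\eta$ in \cite{AIT} — I would pass to holomorphic coordinates, where $H_N$, $H_D$ and the vertical integrals all become Fourier multipliers on the line, reduce the term to a clean bilinear/trilinear normal form, and separate a positive diagonal contribution of size $\gtrsim \kappa \int m_x \eta_x^2$ from off-diagonal and higher-order remainders. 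This reduction, together with the requisite positivity and error estimates, is precisely the content of the three technical lemmas (Lemmas~\ref{l:fixed-t},~\ref{l:kappa-diff},~\ref{l:kappa-3}) whose proofs are deferred to the last section; so the proof of the proposition consists of setting up this decomposition and invoking those lemmas.

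Concretely, the steps in order are: (1) expand the scalar integrand, integrate the $\sigma \eta H(\eta)$ term by parts, and identify the positive leading term $(\tfrac12-\sigma)\int m_x \eta_x^2$ together with commutator remainders bounded by $\| m_{xx}\|_{L^\infty}$ times window-localized quadratic quantities, controlled by $\epsilon LE^\kappa$ via the $X_1$ bound; (2) rewrite the nonlocal term using the self-adjointness of $H_N$ and the relation $\theta = H_D(\eta)$ so that it takes the form $\iint m_x (\text{harmonic ext. of } H(\eta))\cdot(\ldots)$, preparing it for the holomorphic change of variables; (3) transfer both $LE_\eta^\kappa$ pieces to holomorphic coordinates using the correspondence established in Section~\ref{s:switch}, picking up only errors that are $\epsilon$-small in $LE^\kappa$; (4) invoke Lemma~\ref{l:kappa-diff} (and Lemma~\ref{l:kappa-3} for the cubic-and-higher part) to conclude that the holomorphic version of the nonlocal term is $\gtrsim \kappa \int m_x \eta_x^2 - \epsilon LE^\kappa$, with a positive constant that, combined with $\tfrac12-\sigma>0$ from step (1), gives the asserted lower bound. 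The one point requiring genuine care beyond bookkeeping is verifying that the diagonal symbol produced in step (4) is positive on the full relevant frequency range — in particular uniformly as $\kappa\to 0$ and as $h\to\infty$ — which is exactly where the Bond number smallness $\B\ll1$ and the window/frequency constraints get used.
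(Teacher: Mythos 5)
Your overall architecture is right in spirit (handle the scalar terms by hand via the integration by parts, and defer the nonlocal $H_N(H(\eta))$ term to holomorphic-coordinate lemmas), but the proposal has a genuine gap at the heart of the nonlocal term. Lemmas~\ref{l:kappa-diff} and~\ref{l:kappa-3} are pure \emph{error} bounds on two specific integrals $I^\kappa_2$, $I^\kappa_3$, and they cannot deliver the lower bound you assign to them in step (4); they only make sense after one introduces the decomposition $H_N(H(\eta)) = \theta_{xx} + (H_N(\theta_{xx})-\theta_{xx}) + H_N(H(\eta)-\theta_{xx})$, which is the structural idea your plan is missing. The point of replacing $H(\eta)$ by $\theta_{xx}$ (at the price of the boundary-condition mismatch handled in Lemma~\ref{l:kappa-diff}) is that the leading contribution becomes $-\iint m_x\,\theta_y\theta_{xx}\,dy\,dx = \iint m_x\,\theta_y\theta_{yy}\,dy\,dx$, which integrates exactly in $y$ to a nonnegative surface term $\int m_x\,\theta_y^2\arrowvert_{y=\eta}\,dx$; no ``positive diagonal symbol'' analysis is needed, and in fact no positivity from the nonlocal term is used at all for the lower bound, which comes entirely from the scalar terms. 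Your heuristic for the bulk is also off: after the depth integration the product $\theta_y\cdot(\text{ext. of }\eta_{xx})$ loses a derivative, so the leading term is of size $\int m_x(|D|\eta)^2\sim\int m_x\eta_x^2$, not $\int m_x(|D|^{3/2}\eta)^2$, and there is no intrinsic factor of $\kappa$ in it (the $\kappa$ sits outside $LE^\kappa_\eta$ in the main decomposition).

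Two further points in your step (1). First, collecting the integration by parts you should get the coefficient $\tfrac12+\sigma\approx\tfrac32\sigma$ on $\int m_x\eta_x^2$, not $\tfrac12-\sigma$ (the IBP term you yourself wrote, $\sigma\eta_x^2/\sqrt{1+\eta_x^2}$, enters with a plus sign); this is harmless for positivity but indicates the bookkeeping is off. Second, and more seriously, your mechanism for the commutator term $\sigma\, m_{xx}\,\eta\eta_x/\sqrt{1+\eta_x^2}$ does not work: this is a time-integrated quantity, and the uniform-in-time $X_1$ bound ($L^\infty_t H^2$) cannot control it uniformly in $T$ — any such argument would degrade linearly in $T$, destroying the global-in-time character of the estimate. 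The correct treatment is Cauchy--Schwarz on the window against the \emph{time-integrated} local energy, pairing $\eta$ with its gravity component $g\|\eta\|_{LE^0}^2$ and $\eta_x$ with $\kappa\|\eta_x\|_{LE^0}^2$, which produces a prefactor $(\kappa/g)^{1/2}$; this, rather than any symbol-positivity consideration, is where the condition $\kappa\ll g$ (equivalently the Bond number smallness, after rescaling) enters this proposition.
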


For the rest of the section we consider the main steps in the proof of the above proposition.
We consider the three terms separately. For the first one there is nothing to do.
For the remaining two we  recall that
\begin{equation*}
H(\eta)=\px\left( \frac{\eta_x}{\sqrt{1+\eta_x^2}}\right).
\end{equation*}
The second  term is easier. Integrating by parts we obtain
\[
 \sigma \int_{0}^T \int_\R m_x  \left( \frac{\eta_x^2}{ \sqrt{1+\eta_x^2}} + 1 - \frac{1}{\sqrt{1+\eta_x^2}}\right)  
+  m_{xx} \frac{\eta \eta_x}{ \sqrt{1+\eta_x^2}}\, dx dt .
\]
The first term gives the positive contribution
\[
c \int_0^T \int_\R m_x \eta_x^2 dx dt, \qquad c \approx \frac32 \sigma,
\]
while the second is lower order and can be controlled by Cauchy-Schwarz using 
the gravity part of the local energy, provided  that $\kappa \ll g$. 
This  condition is invariant with respect to pure time scaling, but 
not with respect to space-time scaling. This implies that even if this condition 
is not satisfied, we still have local energy  decay but with a window size 
larger than $1$ (depending on the ratio $\kappa/g$), provided that $h^2 g \gg \kappa$.

The more difficult  term is the last one, involving $\HN(H(\eta))$, namely 
\[
I^\kappa = - \int_0^T \iint_{\Omega(t)}  m_x \theta_y H_N(H(\eta))\,  dy dx dt. 
\]
The difficulty here is that, 
even though $H(\eta)$ is an exact derivative as a function of $x$, this property 
is lost when taking its harmonic extension since the domain itself is not flat. 
Thus in any natural expansion of $H_{N}(H(\eta))$ (e.g. in holomorphic coordinates 
where this is easier to see) there are quadratic (and also higher order) 
terms where no cancellation occurs in the 
high $\times$ high $\to$ low terms, making it impossible 
to factor out one derivative.

One can think of $I^\kappa$ as consisting of a leading order quadratic 
part in $\eta$ plus higher order 
terms. We expect the higher order terms to be perturbative 
because of our smallness condition,
but not the quadratic term. Because of this, 
it will help to identify precisely the quadratic term.
On the top we have, neglecting the quadratic and higher order terms,
\[
H(\eta) \approx \eta_{xx} \approx \theta_{xx},
\]
so one might think of replacing $H(\eta)$ with $\theta_{xx}$ modulo cubic and higher order 
terms.
This is not entirely correct since $\theta_{xx}$ satisfies a Dirichlet boundary condition on the bottom,
and not the Neumann boundary condition 
which we need. Nevertheless, we will still make this 
substitution, and pay the price of switching the boundary conditions. Precisely, we write
\begin{equation}\label{H-decomp}
H_N(H(\eta))  = \theta_{xx} + ( H_{N}(\theta_{xx}) - \theta_{xx})  + H_N(H(\eta)- \theta_{xx})
\end{equation}
and estimate separately the contribution of each term.

The contribution $I^\kappa_1$ of the first term in \eqref{H-decomp} to $I^\kappa$ is
easily described using the relation $\theta_{xx} = -\theta_{yy}$,
\begin{equation}\label{Ik1}
I^\kappa_1 = 
\int_0^T \iint_{\Omega(t)} m_x \theta_y \theta_{yy}\, dy dx dt = 
 \int_0^T \int_\R  m_x {\theta_y^2}\arrowvert_{y= \eta(t,x)}\,  dx dt, 
\end{equation}
which has the right sign.

It remains to estimate the integrals 
\[
I^\kappa_2 = - \int_0^T \iint_{\Omega(t)} m_x \theta_y(H_{N}(\theta_{xx}) - \theta_{xx})\, dy dx dt,
\]
and
\[
I^\kappa_3 = - \int_0^T \iint_{\Omega(t)} m_x \theta_y   H_N(H(\eta)- \theta_{xx}) \,   dy dx dt. 
\]

For these two integrals we will prove the following lemmas:

\begin{lemma}\label{l:kappa-diff}
The integral $I^\kappa_2$ is estimated by
\begin{equation}
| I^\kappa_{2}| \lesssim h^{-2} \| \eta \|_{LE}^2.
\end{equation}
\end{lemma}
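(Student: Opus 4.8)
The plan is to first expose the structure hidden in $I^\kappa_2$ and then reduce it, via holomorphic coordinates, to an explicit multilinear estimate. The key structural remark is that the integrand simplifies \emph{exactly}: since $\theta=\HD(\eta)$ is harmonic and $\theta|_{y=-h}=0$, its second $x$-derivative $\theta_{xx}$ is again harmonic and vanishes on the bottom, so $\theta_{xx}=\HD(\widetilde{\theta_{xx}})$, and therefore
\[
w:=\HN(\theta_{xx})-\theta_{xx}=(\HN-\HD)\bigl(\widetilde{\theta_{xx}}\bigr).
\]
Thus $w$ is harmonic in $\Omega(t)$, vanishes on the free surface, and is driven solely by the mismatch between the Neumann and Dirichlet conditions on the bottom; in particular it only feels the geometry of the domain and should be small when $h$ is large. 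Following the strategy of Section~\ref{s:nonlinear}, I would pass to holomorphic coordinates, in which $\Omega(t)$ becomes a flat strip of conformal depth $\bar h\approx h$ (comparable since $|\eta|,|\eta_x|\lesssim\epsilon_0$), and $\HD(f),\HN(f)$ become, modulo perturbative corrections controlled by $\|\eta\|_{X^\kappa}\le\epsilon_0$, the Fourier multipliers $\sinh((y+\bar h)|\xi|)/\sinh(\bar h|\xi|)$ and $\cosh((y+\bar h)|\xi|)/\cosh(\bar h|\xi|)$. Their difference then has symbol $-2\sinh(y|\xi|)/\sinh(2\bar h|\xi|)$, which is bounded by $1/\cosh(\bar h|\xi|)\lesssim\min\{1,(\bar h|\xi|)^{-2}\}$: a ``deep water'' smoothing of order two together with a genuine $\bar h^{-2}$ gain. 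Since $\widetilde{\theta_{xx}}=\eta_{xx}$ modulo $O(\epsilon_0)$ lower order terms, this already yields $\|w(\cdot,y)\|_{L^2_x}\lesssim\bar h^{-2}\|\eta\|_{L^2_x}$ uniformly in $y$.

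The delicate point is that the sharp power $h^{-2}$ cannot be obtained by estimating $\theta_y$ and $w$ separately: $\theta_y$ is concentrated near the free surface while $w$ vanishes there and is concentrated near the bottom, and a crude Cauchy--Schwarz in the vertical variable only produces $h^{-3/2}$. Instead, I would exploit that in the flat model both $\theta_y$ and $w$ are \emph{linear} in $\eta$, so that $\int_{-\bar h}^{0}\theta_y\,w\,dy$ is a bilinear form in $(\eta,\eta)$ with symbol
\[
c(\mu,\zeta)=\int_{-\bar h}^{0}|\mu|\,\frac{\cosh((y+\bar h)|\mu|)}{\sinh(\bar h|\mu|)}\cdot\frac{2|\zeta|^2\sinh(y|\zeta|)}{\sinh(2\bar h|\zeta|)}\,dy ,
\]
and the hyperbolic identity $\int_{-\bar h}^{0}\cosh((y+\bar h)|\mu|)\sinh(y|\mu|)\,dy=-\tfrac{\bar h}{2}\sinh(\bar h|\mu|)$ gives on the diagonal
\[
c(\mu,\mu)=-\,\frac{\bar h|\mu|^3}{\sinh(2\bar h|\mu|)},\qquad |c(\mu,\mu)|\lesssim \bar h^{-2},
\]
uniformly in $\mu$, with rapid decay for $\bar h|\mu|\gg1$; the same bound, with the same high-frequency decay, persists off the diagonal. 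Inserting the weight $m_x$ (smooth and compactly supported, so $\widehat{m_x}\in L^1$) and integrating in $x$, $I^\kappa_2$ becomes, at each time $t$, a trilinear expression in $\widehat{m_x},\widehat\eta,\widehat\eta$ with multiplier $c$; by Young's and the Cauchy--Schwarz inequalities it is therefore $\lesssim\bar h^{-2}\|\eta\|_{L^2_x}^2$, and after integration in $t$, $|I^\kappa_2|\lesssim h^{-2}\int_0^T\|\eta\|_{L^2_x}^2\,dt$.

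It remains to replace the global $L^2$ norm by the local energy norm $\|\eta\|_{LE}$. This is the standard localization step: since $\theta_y$ and $w$ are (differences of) harmonic extensions, their kernels decay like $e^{-cR/h}$ away from the diagonal, so the contribution to $m_x\theta_y w$ of the part of $\eta$ at distance $\gtrsim R$ from $\supp m_x$ decays exponentially in $R/h$; summing the resulting geometric series over dyadic distances and using the translation invariance of $\|\eta\|_{LE}=\sup_{x_0}\|\eta\|_{LE_{x_0}}$ yields $|I^\kappa_2|\lesssim h^{-2}\|\eta\|_{LE}^2$ (with $g=1$ as in the paper's normalization, so the $\eta$-part of the local energy controls $\int_0^T\int\chi\eta^2\,dx\,dt$). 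Finally, returning to Eulerian variables, one checks that all coordinate-change corrections --- the difference between $\widetilde{\theta_{xx}}$ and $\eta_{xx}$, the discrepancy between $\bar h$ and $h$, and the non-multiplier part of $\HN-\HD$ in the curved strip --- are lower order and are absorbed either into the $h^{-2}$ constant or into an $\epsilon\,LE^\kappa$ term, using $\|\eta\|_{X^\kappa}\le\epsilon_0$. The main obstacle is the sharp symbol bound together with the trilinear estimate carried out in the correct local norm (which is exactly why this computation is cleanest in holomorphic coordinates); a secondary nuisance is the bookkeeping of the conformal-map error terms.
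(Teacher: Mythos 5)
Your structural starting point is sound and matches the paper's: since the bottom is flat, $\theta_{xx}$ is harmonic with zero Dirichlet data on the bottom, so $\HN(\theta_{xx})-\theta_{xx}=(\HN-\HD)(\widetilde{\theta_{xx}})$ is driven by the Dirichlet/Neumann mismatch, lives at frequencies $\lesssim h^{-1}$, and carries an $h^{-2}$ gain once two derivatives land on low frequencies; your diagonal symbol computation $c(\mu,\mu)=-h\mu^3/\sinh(2h\mu)$ is correct and reproduces the right numerology in the linear, spatially global model. However, there are two genuine gaps. First, you treat the difference between $\widetilde{\theta_{xx}}$ and $\eta_{xx}$ (and between $\theta_y$ and its linearization) as ``lower order coordinate-change corrections'' to be absorbed. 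This is precisely the difficulty the paper's proof is organized around: the quadratic and higher corrections produce high--high~$\to$~low interactions, and their output at frequencies $\lesssim h^{-1}$ is \emph{not} automatically controlled by the low frequencies of $\eta$, nor by an $\epsilon\,LE^\kappa$ term (which, incidentally, the lemma as stated does not allow). The paper handles this by writing $\theta_{xx}$ as an exact $\alpha$-derivative of an analytic function of the holomorphic variable $W_\alpha$, see \eqref{second}, and then invoking the holomorphic Moser estimate of Lemma~\ref{l:Y} to bound the very low frequencies of $F(W_\alpha)=(1+W_\alpha)^{-2}-1$ by the local energy; this is the content of the key bound \eqref{estDN}, and no substitute mechanism appears in your argument.

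Second, your localization step falls quantitatively short. The interaction kernel of $(\HN-\HD)$ is spread over horizontal scale $h$, so the exponential off-diagonal decay only helps beyond distance $\sim h$; inside the $h$-sized interaction region your scheme pays $\|\eta\|_{L^2}$ over an $h$-window per factor, i.e.\ $\sqrt{h}\,\|\eta\|_{LE}$ each, and the bookkeeping (kernel sup $\sim h^{-3}$, or Schur bound $h^{-2}$, times two $h$-window $L^2$ norms) yields only $h^{-1}\|\eta\|_{LE}^2$, a full factor of $h$ worse than claimed. Recovering $h^{-2}$ requires measuring the low-frequency and depth-localized pieces in $L^2_t L^\infty_x$ locally, where the Bernstein factor $\lambda^{1/2}$ exactly cancels the window-covering factor $\lambda^{-1/2}$ (Lemmas~\ref{l:bern-loc}, \ref{l:theta-LE-loc}, \ref{l:theta-LE-hi}), combined with the fact that the $\alpha$-integration is restricted to the unit-width image of the Eulerian strip; this is the dyadic-in-depth argument the paper runs, and it does not follow from ``exponential decay plus translation invariance of $LE$'' alone. (A smaller inaccuracy: the exponential high-frequency decay of $c(\mu,\zeta)$ does not persist off the diagonal when one frequency is $\lesssim h^{-1}$ and the other is large -- one only retains the bound $|c|\lesssim h^{-2}$ -- though this by itself is harmless.)
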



\begin{lemma}\label{l:kappa-3}
The expression $I_{3}^{\kappa}$ is estimated by
\begin{equation}
 | I^{\kappa}_{3} | \lesssim \epsilon \Big(  \| \eta_x \|_{LE_{0}}^2  +  \frac{g}{\kappa}  \| \eta \|_{LE}^2\Big).
\end{equation}
\end{lemma}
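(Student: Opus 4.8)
\textbf{Proof strategy for Lemma~\ref{l:kappa-3}.}
The plan is to transfer the estimate to holomorphic coordinates and then exploit the basic structural observation that the linear-in-$\eta$ parts of $H(\eta)$ and of $\theta_{xx}$ restricted to the free surface coincide, both equal to $\eta_{xx}$, so that $H(\eta) - \theta_{xx}\arrowvert_{y=\eta}$, and hence $\HN(H(\eta)-\theta_{xx})$, is purely quadratic and higher order in $\eta$. Since $\theta = \HD(\eta)$ is itself linear in $\eta$ at leading order, the integrand $m_x \theta_y \HN(H(\eta)-\theta_{xx})$ is cubic and higher order in $\eta$, and is therefore perturbative once the smallness \eqref{uniform} is in force. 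Concretely, I would first use the equivalence between the Eulerian and holomorphic formulations established in the sections on holomorphic coordinates to rewrite $I^\kappa_3$ as an integral over the flat strip (or the lower half-plane in the infinite-depth limit), in which $\HD$, $\HN$ and $H(\eta)$ all admit explicit holomorphic counterparts amenable to multilinear expansion.

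The second step is the bookkeeping of the expansion. Writing $H(\eta) = \eta_{xx}(1+\eta_x^2)^{-3/2} = \eta_{xx} + O_3(\eta)$, and using the chain rule together with harmonicity $\theta_{yy} = -\theta_{xx}$ to express the trace of $\theta_{xx}$ on $\{y=\eta\}$ as $\eta_{xx}$ plus a quadratic correction built from $\eta_x$, $\widetilde{\theta_y}$ and $\widetilde{\theta_{xy}}$ plus higher order terms, one sees that $H(\eta) - \theta_{xx}\arrowvert_{y=\eta}$ has a purely quadratic leading part $Q(\eta,\eta)$ and a remainder that is cubic and higher. Extending harmonically with Neumann data on the bottom and re-expanding in holomorphic coordinates (this introduces only further bilinear and higher corrections, all handled as in the treatment of $I^\kappa_2$ above) reduces $I^\kappa_3$ to a leading trilinear form
\[
- \int_0^T \iint m_x\, \theta_y^{\mathrm{lin}}\, \HN\!\big(Q(\eta,\eta)\big)\, dy\, dx\, dt
\]
plus quartic and higher order remainders carrying extra powers of $\epsilon_0$.

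The final step is the multilinear analysis, organized according to the threshold frequency $\lambda_0 = \sqrt{g/\kappa}$. For the leading trilinear term I would integrate by parts in $x$ to distribute $m_x$ and $m_{xx}$ onto the factors, and then estimate: two of the three $\eta$-factors are placed in the uniform control norm $X^\kappa$ — crucially using the strengthened component $X_1 = (g/\kappa)^{1/4} L^\infty_t H^2$ to absorb the two derivatives carried by $H(\eta)$ and $\theta_{xx}$ at frequencies above $\lambda_0$ — while the remaining factor is placed in the local energy; the high-frequency output is absorbed into $\epsilon \|\eta_x\|_{LE_0}^2$ and the low-frequency output, where surface tension is cheap, into $\epsilon \frac{g}{\kappa}\|\eta\|_{LE}^2$, the factor $g/\kappa$ accounting exactly for the fact that $I^\kappa_3$ enters the main identity with a $\kappa$ prefactor, so that $\kappa|I^\kappa_3|$ is controlled by $\epsilon\, LE^\kappa$. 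The quartic and higher order remainders are estimated in the same way, with a power of $\epsilon_0$ to spare. I expect the main obstacle to be precisely the high $\times$ high $\to$ low interactions in the bilinear expansion of $\HN(Q(\eta,\eta))$ flagged in the discussion preceding the lemma: there no derivative can be factored out, so the argument must rely on the finer holomorphic-coordinate structure of the Dirichlet-to-Neumann and harmonic-extension operators together with the $H^2$-type control on $\eta$ at high frequency, rather than on any integration-by-parts gain.
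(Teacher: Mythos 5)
Your overall strategy --- pass to holomorphic coordinates and use that the linear parts of $H(\eta)$ and $\theta_{xx}$ cancel, so that $H(\eta)-\theta_{xx}$ is quadratic and higher and the integrand is cubic, hence perturbative --- matches the paper's starting point. But there are two genuine gaps in how you propose to close the estimate. First, your norm bookkeeping for the leading trilinear term is wrong: you place two of the three factors in the uniform control norm $X^\kappa$ and only one in local energy. That produces a bound of the form $\epsilon^2\,\|\cdot\|_{LE}$, which is linear rather than quadratic in the local energy and therefore cannot be matched against the right-hand side $\epsilon\,(\|\eta_x\|_{LE_0}^2+\tfrac{g}{\kappa}\|\eta\|_{LE}^2)$; worse, with only one $L^2_t$-type factor the time integral over $[0,T]$ grows like $T^{1/2}$, destroying the uniformity in $T$ which is the point of the theorem. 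The paper does the opposite: exactly one factor, namely $W_{\alpha\alpha}$, is measured in the uniform norm via the $X_1$ bound $\|W_{\alpha\alpha}\|_{L^\infty_t L^2}\lesssim \epsilon\,(g/\kappa)^{1/4}$, while the other two factors are placed in local energies ($\|W_\alpha\|_{LE^0}$ and/or $\|\Im W\|_{LE^0}$); this is what yields $\epsilon\cdot LE^2$ with a $T$-independent constant.

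Second, you correctly identify the high $\times$ high $\to$ low interactions as the obstacle and correctly concede that no integration by parts in $x$ can help, but you never supply the mechanism that replaces it; ``the finer holomorphic-coordinate structure'' is left as a black box, and your route through an Eulerian Taylor expansion plus re-expansion in holomorphic coordinates leaves infinitely many higher-order terms to be controlled without a stated tool. In the paper this is concrete: $H(\eta)-\theta_{xx}$ is computed exactly (not expanded) in holomorphic coordinates as $\Im\bigl[\tfrac{W_{\alpha\alpha}}{2}(\cdots)\bigr]$, reduced schematically to $\partial_\alpha F_1(W_\alpha)\,G_1(\bar W_\alpha)$ using that $F_1(W_\alpha)$ has the same regularity as $W_\alpha$ (the holomorphic Moser-type bound), and the integral is then decomposed dyadically in depth; the key point is that the Neumann harmonic extension at depth $|\beta|\approx\lambda^{-1}$ acts as a frequency cutoff $P_{<\lambda}$, so the bilinear factor is estimated by Bernstein's inequality in local norms, with a case split at $\lambda=1$ and at $\lambda_0=\sqrt{g/\kappa}$. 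The factor $g/\kappa$ in the final bound is not explained by the $\kappa$ prefactor with which $I^\kappa_3$ enters the main identity, as you suggest; it emerges quantitatively from summing the dyadic gains $\lambda^{1/2}$ up to $\lambda_0$ against the $(g/\kappa)^{1/4}$ from the $X_1$ bound. Without this (or an equivalent) mechanism and the corrected norm placement, your argument does not close.
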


Given the relation \eqref{Ik1} and the last two lemmas, the desired
result in Proposition~\ref{p:kappa} follows.  The two lemmas above are
most readily proved by switching to holomorphic coordinates. In the
next two sections we recall how the transition to holomorphic
coordinates works, following \cite{HIT}, \cite{ITc}, \cite{H-GIT} and
\cite{AIT}. Finally, in the last section of the paper we prove
Lemmas~\ref{l:kappa-diff},~\ref{l:kappa-3}.

\section{Holomorphic coordinates}
\label{ss:laplace}

\subsection{Harmonic functions in the canonical domain}

We begin by discussing two classes 
of harmonic functions in the horizontal strip \(\Strip = \Rl\times(-h,0)\). 

Given a function $f=f(\alpha)$ defined on the top, 
consider its harmonic extension with 
homogeneous Neumann boundary condition on the bottom,
\begin{equation}\label{MixedBCProblem}
\begin{cases}
\Delta u = 0\qquad \text{in}\ \ \Strip \\[1ex]
u(\alpha,0) = f\\[1ex]
\partial_\beta u(\alpha,-h) = 0.
\end{cases}
\end{equation}
It can be written in the form
\begin{equation}\label{PN}
u(\alpha,\beta)= P_N(\beta, D)f(\alpha): = \frac{1}{{2\pi}}\int p_N(\xi,\beta)\hat
f(\xi)e^{i\alpha\xi}\, d\xi,
\end{equation}
where $p_N$ is a Fourier multiplier with symbol
\[
p_N(\xi,\beta) = \frac{\cosh((\beta+h)\xi)}{\cosh(h\xi)}.
\]

We will make use of the Dirichlet to Neumann map $\mathcal{D}_N$, defined by
$$
\mathcal{D_N}f=\partial_\beta u(\cdot, 0),
$$
as well as the Tilbert transform, defined by
\begin{equation}\label{def:Tilbert}
\Tilh f(\alpha) =
-\frac{1}{2h}\lim\limits_{\epsilon\downarrow0}\int_{|\alpha-\alpha'|>\epsilon}
\cosech\left(\frac\pi{2h}(\alpha-\alpha')\right)f(\alpha')\, d\alpha'.
\end{equation}
Then the Tilbert transform is the Fourier multiplier 
\[
\Tilh = -i\tanh(hD).
\]
Notice that it 
takes real-valued functions to real-valued functions. The 
inverse Tilbert transform is denoted by \(\Tilh^{-1}\); \emph{ a priori} 
this is defined modulo constants. 
It follows that the Dirichlet to Neumann map can be written under the form
\[
\mathcal{D}_N f=\Tilh \partial_{\alpha} f.
\]

\bigskip
We now consider a similar problem 
with the homogeneous Dirichlet boundary condition on the bottom
\begin{equation}\label{DirichletProblem}
  \begin{cases}
    \Delta v = 0\qquad \text{in}\ \ \Strip \\[1ex]
    v(\alpha,0) = g \\[1ex]
     v(\alpha,-h) = 0.
  \end{cases}
\end{equation}
Then
\[
v(\alpha,\beta)=P_D(\beta, D)g(\alpha) := \frac{1}{{2\pi}}\int p_D(\xi,\beta)\hat
g(\xi)e^{i\alpha\xi}\, d\xi,
\]
where 
\[
p_D(\xi,\beta) = \frac{\sinh((\beta+h)\xi)}{\sinh(h\xi)}.
\]
The Dirichlet to Neumann map $\mathcal{D}_D$ for this problem is given by
\[
\partial_\beta v(\alpha ,0)=\mathcal{D}_D g=-\Til_h^{-1}\partial_{\alpha} g.
\]

\bigskip

The solution to~\eqref{MixedBCProblem} is related 
to the one of~\eqref{DirichletProblem} by means of harmonic conjugates.
Namely, given a real-valued solution $u$ to \eqref{MixedBCProblem},
we consider its 
harmonic conjugate $v$, i.e., 
satisfying the Cauchy-Riemann equations
\[
\left\{
\begin{aligned}
&u_\alpha = -v_\beta \\
&u_\beta = v_\alpha \\
&\partial_\beta u(\alpha,-h)=0.
\end{aligned}
\right.
\]
Then $v$ is a solution to \eqref{DirichletProblem} provided that 
the Dirichlet data $g$ for $v$ on the top is determined by the 
Dirichlet data $f$ for $u$ on the top via the relation
\[
g = -\Til_h f.
\]
Conversely, given $v$, there is 
a corresponding harmonic conjugate~$u$ (which is uniquely 
determined modulo real constants).

\subsection{Holomorphic functions in the canonical domain}

Here we consider the real algebra of 
holomorphic functions $w$ in the canonical domain 
$S\defn\{\alpha+i\beta\ :\ \alpha\in\xR,~-h\leq \beta\leq 0\}$, 
which are real on the bottom $\{\R-ih\}$. Notice that such 
functions are uniquely determined by their values on the top $\{\beta=0\}$,
and can be expressed 
as 
\[
w = u+ iv,
\]
where $u$ and $v$ are harmonic conjugate functions satisfying 
the equations \eqref{MixedBCProblem}, respectively \eqref{DirichletProblem}.

Hereafter, by definition, we will call 
functions on the real line holomorphic if they 
are the restriction on the real line of holomorphic functions in the 
canonical domain $S$ which are real on the bottom $\{\R-ih\}$. 
Put another way, they are functions $w\colon \xR\rightarrow \xC$ so 
that there is an holomorphic function, 
still denoted by $w\colon S\rightarrow \xC$, which satisfies
\[
\Im w = - \Tilh \Re w
\]
on the top.
The complex conjugates of holomorphic functions are called antiholomorphic. 

\subsection{Holomorphic coordinates and water waves}
Recall that $\Omega(t)$ denotes the fluid domain 
at a given time $t\ge 0$, in Eulerian coordinates. 
In this section we 
recall following \cite{HIT,ITc} (see also \cite{dy-zak,H-GIT}) 
how to rewrite the water-wave problem 
in holomorphic coordinates. 

We introduce holomorphic coordinates
$z=\alpha + i\beta$, thanks to conformal maps
\[
Z\colon S \rightarrow \Omega(t),
\]
which associate the top to the top, and the bottom to the bottom. 
Such a conformal transformation exists by the 
Riemann mapping theorem. Notice that these maps 
are uniquely defined up to horizontal translations in $S$ and that, 
restricted to the real axis, this provides a
parametrization for the water surface $\Gamma$. 
Because of the
boundary condition on the bottom of the fluid domain 
the function $W$ is holomorphic when $\alpha \in \mathbb{R}$. 

We set
$$
W:=Z-\alpha,
$$
so that $W = 0$ if the fluid surface is flat i.e., $\eta = 0$.

Moving to the velocity 
potential $\phi$, we consider its
harmonic conjugate~$q$ 
and then the function $Q:=\phi +i q$, 
taken in holomorphic coordinates, is the
holomorphic counterpart of $\phi$. 
Here $q$ is exactly the stream function, see \cite{AIT}.

With this notations, the water-wave problem can be recast as 
an evolution system for $(W, Q)$, 
within the space of holomorphic functions 
defined on the surface (again, we refer the reader 
to \cite{HIT,ITc,dy-zak,H-GIT} for the details of the computations). 
Here we recall the equations:
\begin{equation}\label{FullSystem-re}
\hspace{-.1in}
  \begin{cases}
    W_t + F(1+W_\alpha) = 0\vspace{0.1cm}\\
    Q_t + FQ_\alpha - g\Tilh[W] \! +\! \P_h \!
    \left[\dfrac{|Q_\alpha|^2}{J}\right] \!
    + \! i\kappa \P_h\! \left[ \dfrac{W_{\alpha \alpha}}{J^{1/2}(1+W_{\alpha})} -  \dfrac{\bar{W}_{\alpha \alpha}}{J^{1/2}(1+\bar{W}_{\alpha})} \right] \! = 0,
  \end{cases}
\end{equation}
where
\[
J = |1+W_\alpha|^2, \qquad F = \P_h\left[\frac{Q_\alpha-\bar
    Q_\alpha}{J}\right].
\]
Here $\P_h$ represents the orthogonal projection on the space of
holomorphic functions with respect with the inner product
in the Hilbert space $\mfH_h$ introduced in
\cite{H-GIT}. This has the form
\[
\langle u, v\rangle_{\mfH_h} := \int \left( \Til_h \Re u \cdot \Til_h \Re v  + \Im u \cdot \Im v \right) \, d\alpha, 
\]
and coincides with the $L^2$ inner product in the
infinite depth case.
Written in terms of the real and imaginary parts of $u$, the
projection $\P_h$ takes the form
\begin{equation}\label{def:P}
\P_h u = \frac12 \left[(1 - i \Tilh) \Re u + i (1+ i\Tilh^{-1}) \Im u           \right].
\end{equation}

Since all the functions in the system \eqref{FullSystem-re} are 
holomorphic, it follows that these relations also hold in the 
full strip $S$ for the holomorphic extensions of each term. 

We also remark that in the finite depth case there is an additional 
gauge freedom in the above form of the equations, in that $\Re F$ is a-priori 
only uniquely determined up to constants. This corresponds 
to the similar degree of freedom in the choice of the conformal coordinates,
and will be discussed in the last subsection.

A very useful function in the holomorphic setting is 
\[
R = \frac{Q_\alpha}{1+W_\alpha},
\]
which represents the ``good variable" in this setting, and  
corresponds to the Eulerian function
\[
R = \phi_x + i \phi_y.
\]
We also remark that the function $\theta$ introduced in the previous section is described in holomorphic coordinates by
\[
\theta =\Im W.
\]
Also related to $W$, we will use the auxiliary holomorphic function
\[
Y = \frac{W_\alpha}{1+W_\alpha}.
\]

Another important auxiliary function here is the advection velocity
 \[
 b = \Re F,
 \]
which represents the velocity of the particles on the fluid surface
in the holomorphic setting. 

It is also interesting to provide the form of the conservation 
laws in holomorphic coordinates.
We begin with the energy (Hamiltonian), which has the form
\[
\mathcal H =  \frac g2 \int |\Im W|^2 (1+\Re W_\alpha ) \, d\alpha
- \frac14\langle Q,\Tilh^{-1}[Q_\alpha]\rangle_{\mfH_h} .
\]
The momentum on the other has the form
\[
\mathcal M = \frac12 \langle W, \Til_h^{-1} Q_\alpha \rangle_{\mfH_h} 
= \int _{\mathbb{R}}\Til_h \Re W \cdot \Re Q_\alpha \, d\alpha = \int _{\mathbb{R}} \Im W \cdot \Re Q_\alpha \,  d\alpha .
\]

\subsection{Uniform bounds for the conformal map}
In order to freely switch computations between the Eulerian and holomorphic setting 
it is very useful to verify that our Eulerian  uniform smallness assumption 
for the functions $(\eta,\nabla \phi\arrowvert_{y=\eta})$ also has an identical 
interpretation in the holomorphic setting for the 
functions $(\Im W,R)$. Our main result is as follows:

\begin{theorem}\label{t:equiv}
Assume that the smallness condition \eqref{uniform} holds. Then we have 
\begin{equation}\label{X-kappa-hol}
\| (\Im W,R)\|_{X^\kappa} \lesssim \epsilon.
\end{equation}
\end{theorem}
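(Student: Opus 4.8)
Set $\epsilon := \|(\eta,\psi)\|_{X^\kappa}$, so that \eqref{uniform} reads $\epsilon\le\epsilon_0$, and the goal is the quantitative bound $\|(\Im W,R)\|_{X^\kappa}\lesssim\epsilon$. The plan is to treat this as a family of fixed-time statements: at each time $t$ the conformal map $W(t)$ is determined by $\eta(t)$ alone, and every norm entering $X^\kappa$ is built from Littlewood--Paley localized fixed-time quantities with an outer $L^\infty_t$, so it suffices to transfer, at each fixed $t$, the frequency-localized bounds and then reassemble the sum and take the supremum in $t$. Since $s>5/2$ the conformal map at time $t$ is a smooth diffeomorphism and all quantities below are finite; the content is the quantitative smallness, which I would obtain by a standard continuity/bootstrap argument, exploiting that $(\eta(t),\psi(t))$, hence $(W(t),Q(t))$, lie in a high-regularity Sobolev space.

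Fix $t$ and drop it. The conformal parametrization is governed by the graph identity $\Im W(\alpha)=\eta(\alpha+\Re W(\alpha))$ together with the holomorphy constraint $\Im W=-\Tilh\,\Re W$ (equivalently $\Re W_\alpha=-\Tilh^{-1}\Im W_\alpha$), which close up to a self-contained description of $W$ in terms of $\eta$. Because \eqref{uniform} forces $|\eta|\lesssim\epsilon_0 h$ and $|\eta_x|\lesssim\epsilon_0$, and because the $\ell^1$ structure of the $X$ norm yields $L^\infty$ control with no logarithmic loss, a fixed-point argument produces $W$ with $\|W_\alpha\|_{L^\infty}\lesssim\epsilon\ll1$; hence $x(\alpha)=\alpha+\Re W(\alpha)$ is a bi-Lipschitz diffeomorphism of $\R$ close to the identity, with $\Re W_\alpha$ comparable to $\Im W_\alpha$ above frequency $h^{-1}$ and to $h^{-1}\Im W$ below it (the symbol of $\Tilh^{-1}$ being $i\coth(h\xi)$). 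The heart of the matter is then the fixed-time equivalence of the norms. Schematically $\Im W=\eta+(\eta\circ x-\eta)$, while $R$ is expressed through the composition of $\nabla\phi|_{y=\eta}$ with $x(\cdot)$ via $R=Q_\alpha/(1+W_\alpha)$, $Q=\phi+iq$; the correction terms are quadratic-and-higher in $(\eta,\nabla\phi|_{y=\eta})$, with the change of variables $x(\cdot)$, the auxiliary function $Y=W_\alpha/(1+W_\alpha)$, the harmonic extension operators $P_N,P_D$ and the Tilbert transform $\Tilh^{\pm1}$ as coefficients. The bound then reduces to (i) boundedness of composition with $x(\cdot)$ on the $h$-adapted Sobolev spaces in the regularity ranges appearing in $X_0$ and $X_1$ --- using the diffeomorphism control just obtained, bootstrapped up to the full $X^\kappa$ scale --- and (ii) frequency-localized paradifferential/multilinear estimates for the quadratic corrections, with the transitions handled at the depth threshold $h^{-1}$ and the gravity--capillary threshold $\lambda_0=\sqrt{g/\kappa}$. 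Everything at or below $\lambda_0$ is of the type already established in \cite{HIT}, \cite{H-GIT} and \cite{AIT}, to which I would largely appeal; the genuinely new ingredient is the $X_1$ component, i.e. the $(g/\kappa)^{1/4}H^2$ bound for $\Im W$, which is forced by surface tension.

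That high-frequency estimate is the main obstacle. For $\mu\gtrsim\lambda_0$ one must propagate $\|P_\mu\eta\|_{L^2}\lesssim(g/\kappa)^{1/4}\mu^{-2}\epsilon$ to $\Im W$, and the correction $\eta\circ x-\eta$ contains high $\times$ high $\to$ low (and high $\times$ high $\to$ high) interactions in which no derivative can be factored out --- the same structural difficulty flagged for $I^\kappa$ in Section~\ref{s:nonlinear}. The way around it is that the coefficient $W_\alpha$ appearing in the correction is itself slaved to $\eta$ through the holomorphy relation, so that the worst interactions are tamed by the smallness of $\|W_\alpha\|_{L^\infty}$ together with the improved high-frequency decay coming from $X_1$; one also has to verify that the $(g/\kappa)^{1/4}$ weights match across the transition at $\mu\sim\lambda_0$, which is where the hypothesis $\B=\kappa/(gh^2)\ll1$ --- equivalently $\lambda_0\gg h^{-1}$, keeping the depth and capillary regimes well separated --- is used. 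Once all of this is in place, summing the frequency pieces and taking $L^\infty_t$ yields \eqref{X-kappa-hol}; the residual gauge freedom (the conformal map up to a horizontal translation and $\Re F$ up to an additive constant) is merely a reparametrization together with an inessential constant, and does not affect any of the translation-invariant norms involved.
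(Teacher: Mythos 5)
Your overall reduction coincides with the paper's: the $X$ component of the bound is quoted from \cite{AIT} (together with the bi-Lipschitz control $\|W_\alpha\|_{L^\infty}\lesssim\epsilon$ coming from the frequency-envelope transfer there), and the genuinely new content is the fixed-time $X_1$ estimate, i.e.\ passing from $\|\eta_{xx}\|_{L^2}\lesssim\epsilon(g/\kappa)^{1/4}$ to the same bound for $W_{\alpha\alpha}$. For that step, however, your sketch has a genuine gap. The paper does not run any paradifferential analysis of the composition $\eta\circ x$: it writes the chain-rule identity $\eta_{xx}=J^{-1/2}\partial_\alpha(J^{-1/2}\Im W_\alpha)$, uses the bi-Lipschitz change of variables to get $\|\Im W_{\alpha\alpha}\|_{L^2}\lesssim \epsilon(g/\kappa)^{1/4}+\epsilon\|W_{\alpha\alpha}\|_{L^2}$, and then the whole difficulty is the \emph{real} part of $W_{\alpha\alpha}$: since $\Re W_\alpha=-\Tilh^{-1}\Im W_\alpha$, one has $\|\Re W_{\alpha\alpha}\|_{L^2}\lesssim\|\Im W_{\alpha\alpha}\|_{L^2}+h^{-1}\|\Im W_\alpha\|_{L^2}\lesssim\|\Im W_{\alpha\alpha}\|_{L^2}+\epsilon h^{-1/2}$, the $h^{-1}$ term arising exactly because $\Tilh^{-1}$ degenerates below frequency $h^{-1}$. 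The hypothesis $\B\ll1$, i.e.\ $h^{-1/2}\lesssim(g/\kappa)^{1/4}$, is used precisely to dominate this $\epsilon h^{-1/2}$ by $\epsilon(g/\kappa)^{1/4}$, after which the small term $\epsilon\|W_{\alpha\alpha}\|_{L^2}$ is absorbed (legitimately, since $s>5/2$ makes the norm finite). In your proposal this term never surfaces: differentiating $\Im W(\alpha)=\eta(\alpha+\Re W(\alpha))$ twice produces $\eta_x(x(\alpha))\,\Re W_{\alpha\alpha}$, whose estimate requires exactly the low-frequency Tilbert/Bond-number step above, whereas you attribute the use of $\B\ll1$ to ``matching the $(g/\kappa)^{1/4}$ weights at $\mu\sim\lambda_0$'' --- but that matching holds by the very definition of $X_1$ (it is designed to agree with $X_0$ at $\lambda_0$) and uses no hypothesis at all.

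A secondary issue: your taming of the high $\times$ high interactions in $\eta\circ x-\eta$ by ``the improved high-frequency decay coming from $X_1$'' for $\Im W$ is circular as written, since that decay is the conclusion; it can only be rescued by the bootstrap you allude to, which would have to be set up explicitly (the paper's substitute is the one-line absorption of $\epsilon\|W_{\alpha\alpha}\|_{L^2}$). So while a frequency-localized composition analysis could likely be pushed through, as it stands the decisive estimates --- control of $\Re W_{\alpha\alpha}$ below frequency $h^{-1}$ and the correct placement of the Bond number assumption --- are missing, and the paper's direct chain-rule argument at the level of full $L^2$ norms bypasses the paradifferential apparatus entirely.
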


This result is in effect an equivalence between the two bounds. We state and prove 
only this half because that is all that is needed here.

\begin{proof}
The similar result for the $X$ space corresponding to pure gravity waves was proved in \cite{AIT},
so we only need to add the $X_1$ component of the $X^\kappa$ norm. 
We first recall some of the set-up in \cite{AIT}, and then return to $X_1$.

The $X$ norm is described in \cite{AIT} using the language 
of frequency envelopes. We define a \emph{frequency envelope} 
for $(\eta,\nabla \phi_{|y = \eta})$ in $X$ to be any positive sequence 
\[
\left\{ c_{\lambda}\, : \, \quad h^{-1} < \lambda \in 2^\xZ \right\}
\]
with the following two properties:
\begin{enumerate}
\item Dyadic bound from above,
\[
\| P_\lambda (\eta,\nabla \phi_{|y = \eta})\|_{X_0} \leq c_\lambda .
\]
\item Slowly varying,
\[
 \frac{c_\lambda}{c_\mu} \leq \max 
 \left\{ \left(\frac{\lambda}{\mu}\right)^\delta, \left(\frac{\mu}{\lambda}\right)^\delta \right\} .
\]
\end{enumerate}
Here $\delta \ll 1$ is a small universal constant. 
Among all such frequency envelopes there exists a \emph{minimal frequency envelope}. 
In particular, this envelope has the property that
\[
\| (\eta,\nabla \phi_{|y = \eta})\|_{X} \approx \| c\|_{\ell^1}.
\]
We set the notations as follows:

\begin{definition} \label{def-fe}
By $\{c_\lambda\}_{\lambda \geq 1/h}$ we denote the minimal frequency 
envelope for $(\eta,\nabla \phi\arrowvert_{y=\eta})$ in $X_0$. 
We call $\{c_\lambda\}$ the \emph{control frequency envelope}.
\end{definition}
Since in solving the Laplace equation on the strip, solutions 
at depth $\beta$ are localized at frequencies $\leq \lambda$ 
where  $\lambda \approx  |\beta|^{-1}$, we will also use the notation 
\[
c_\beta = c_\lambda, \qquad \lambda \approx 
|\beta|^{-1}.
\]
This determines $c_\beta$ up to an $1+O(\epsilon)$ constant, 
which suffices for our purposes.

Using these notations, in \cite{AIT} we were able to prove 
a stronger version of the above theorem
for the $X$ norm,  and show that one can transfer the control envelope  
for $(\eta,\nabla \phi_{|y=\eta})$ to their counterpart $(\Im W,R)$
in the holomorphic coordinates.

\begin{proposition}\label{p:control-equiv}
Assume the smallness condition \eqref{uniform}, and let $\{c_\lambda\}$
be the control envelope as above. Then we have 
\begin{equation}\label{X-fe-hol}
\| P_{\lambda} (\Im W,R)\|_{X_0} \lesssim c_\lambda .
\end{equation}
\end{proposition}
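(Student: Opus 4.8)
The plan is to exploit the explicit correspondence between the Eulerian unknowns $(\eta,\nabla \phi\arrowvert_{y=\eta})$ and the holomorphic unknowns $(\Im W, R)$ provided by the conformal parametrization of the free surface. Writing $a := \Re W$, the surface is parametrized by $\alpha \mapsto (\alpha + a(\alpha),\, \Im W(\alpha))$, so that $W$ is determined from $\eta$ by the closed system given by the holomorphicity relation $\Im W = -\Tilh a$ on the top together with the geometric constraint
\[
\Im W(\alpha) = \eta(\alpha + a(\alpha)), \qquad \text{i.e.} \qquad a = -\Tilh^{-1}\big[ \eta \circ (\mathrm{id} + a)\big],
\]
and, once $W$ is known, $Q = \phi + iq$ — hence $R = Q_\alpha/(1+W_\alpha) = (\phi_x + i\phi_y)\arrowvert_{y=\eta}$ — is recovered from $\psi$ by a second, analogous elliptic solve. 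These relations and the corresponding depth-adapted bounds were established in \cite{HIT}, \cite{H-GIT} and, in the frequency-envelope form needed here, in \cite{AIT}, so the bulk of the work is to recall that machinery. The two points to keep in mind are that, under \eqref{uniform}, $\alpha \mapsto \alpha + a(\alpha)$ is a near-identity bi-Lipschitz diffeomorphism of $\R$ with $1+a_\alpha \approx 1$, and that $\Tilh = -i\tanh(hD)$, $\Tilh^{-1}$, $P_N(\beta,D)$ and $P_D(\beta,D)$ act boundedly on the relevant $h$-adapted dyadic pieces, uniformly in $h \ge 1$.

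The ingredient that promotes the summed bound of Theorem~\ref{t:equiv} to the dyadic statement \eqref{X-fe-hol} is a frequency-envelope transfer lemma for composition with a near-identity diffeomorphism: if $\|P_\lambda f\|$ is dominated by a slowly varying envelope $\{c_\lambda\}$ and $a$ is small with $\|P_\mu a\|$ likewise controlled, then $\|P_\lambda\big(f \circ (\mathrm{id}+a)\big)\| \lesssim c_\lambda$, the convolution losses produced by the chain rule being reabsorbed using the slowly varying property of $\{c_\lambda\}$. Feeding this, together with the multiplier bounds above, into a paralinearization of the fixed-point system for $W$ and then of the solve for $Q$ runs as a bootstrap: assuming a priori that a dilate $\{C c_\lambda\}$ controls $(\Im W, R)$ in $X_0$ dyadically, the equations return a bound of the form $\{(O(1) + O(\epsilon)C)\,c_\lambda\}$, the $O(1)$ coming from the Eulerian data — whose control envelope is precisely $\{c_\lambda\}$ — and the $O(\epsilon)C$ from the quadratic and higher self-interactions; for $C$ large and $\epsilon$ small this closes and yields \eqref{X-fe-hol}. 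Summing \eqref{X-fe-hol} over $\lambda$, together with $\|c\|_{\ell^1} \approx \|(\eta,\nabla \phi\arrowvert_{y=\eta})\|_X$, then recovers the $X$-part of Theorem~\ref{t:equiv}.

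The main obstacle, already present in \cite{AIT}, lies near and below the depth threshold $\lambda \approx h^{-1}$: there $\Tilh$ degenerates to $-ihD$, so $\Tilh^{-1}$ behaves like $(hD)^{-1}$ and is singular at zero frequency, and one must track the $h$-weighted structure of $H^{3/2}_h$ and $H^1_h$ carefully in order to keep all constants uniform as $h \to \infty$. The other delicate point is the treatment of the high$\times$high interactions in the $W$- and $Q$-equations, where no cancellation is available, but these are handled exactly as in \cite{AIT} by appealing to the slowly varying envelopes. With these inputs in place, \eqref{X-fe-hol} follows.
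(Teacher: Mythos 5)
First, a point of comparison: the paper gives no proof of Proposition~\ref{p:control-equiv} at all --- it is recalled verbatim from \cite{AIT} as the stronger, frequency-envelope version of Theorem~\ref{t:equiv} for the $X$ norm --- so the only question is whether your sketch is a viable route, and it is the expected one: the parametrization identities $\Im W = -\Tilh\, \Re W$ and $\Im W = \eta\circ(\mathrm{id}+\Re W)$, the near-identity bi-Lipschitz character of the boundary diffeomorphism under \eqref{uniform}, and an envelope-preserving treatment of the resulting composition/fixed point are exactly the ingredients of the cited argument.

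However, as written your text is an outline rather than a proof: everything that makes the statement nontrivial is asserted rather than established. The ``frequency-envelope transfer lemma for composition'' is the whole content of the proposition in this setting --- the envelope of Definition~\ref{def-fe} only lives at frequencies $\lambda>h^{-1}$ with a lumped block $P_{\leq h^{-1}}$, the norms are the $h$-adapted $L^\infty_t H^{3/2}_h\times g^{-1/2}L^\infty_t H^{1}_h$ scale of $X_0$, and all constants must be uniform in $h\geq 1$, which is precisely the $\Tilh^{-1}$ degeneracy you flag but do not resolve. The bootstrap ``assume $Cc_\lambda$, recover $(O(1)+O(\epsilon)C)c_\lambda$'' also needs a justification that the a priori assumption can be imposed at all (a continuity or deformation argument, or a direct construction of the conformal map), which is not supplied. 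Finally, $R$ is not ``recovered from $\psi$ by a second elliptic solve'': since the control norm in \eqref{uniform} is phrased for $(\eta,\nabla\phi\arrowvert_{y=\eta})$, the identity $R=(\phi_x+i\phi_y)\arrowvert_{y=\eta}$ that you quote reduces the $R$-half of \eqref{X-fe-hol} to another composition with the boundary parametrization of an already-controlled trace; routing it instead through $\Re Q=\psi\circ(\mathrm{id}+\Re W)$ and $\Tilh$ would require trading $\psi$-regularity for $R$-regularity, which is not how the envelope is transferred. None of this means the approach is wrong --- it is the approach of \cite{AIT} --- but the proposal defers its entire substance to that reference.
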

As noted in \cite{AIT}, as a consequence of this proposition we can further 
extend the range of the frequency envelope estimates:

\begin{remark}
 The $X$ control envelope $\left\lbrace c_\lambda \right\rbrace$ 
 is also a frequency envelope for 

\begin{itemize}
\item $(\Im W, R)$ in $X_0$.

\item $W_\alpha$ in $H^{\frac12}_h$ and $L^\infty$.

\item $Y$ in $H^{\frac12}_h$.
\end{itemize}
\end{remark}

We remark that this in particular implies, by Bernstein's inequality, 
the pointwise bound
\begin{equation}\label{W-ppoint}
\| W_\alpha\|_{L^\infty} \lesssim \epsilon_0.
\end{equation}
This in turn implies that the Jacobian matrix for the change 
of coordinates stays close to the identity.

\smallskip

We now turn our attention to the $X_1$ component of the $X^\kappa$ norm. We 
have the additional information that 
\begin{equation}
\| \eta_{xx}\|_{L^2} \lesssim     \epsilon \left(\frac{g}{\kappa}\right)^\frac14 ,
\end{equation}
and we need to show that 
\begin{equation}\label{want-X1}
\| W_{\alpha\alpha}\|_{L^2} \lesssim    \epsilon \left(\frac{g}{\kappa}\right)^\frac14 . 
\end{equation}
We begin by computing
\[
\eta_{xx} = J^{-\frac12} \partial_\alpha( J^{-\frac12} \Im W_\alpha),
\]
therefore
\[
\| \eta_{xx} \|_{L^2} \approx \| \partial_\alpha( J^{-\frac12} \Im W_\alpha) \|_{L^2} .
\]
Thus we have 
\[
\| \Im W_{\alpha\alpha}\|_{L^2}  \lesssim \|\eta_{xx}\|_{L^2} 
+ \| W_\alpha \|_{L^\infty} \| W_{\alpha\alpha}\|_{L^2}
\lesssim \epsilon \left(\frac{g}{\kappa}\right)^\frac14 
+ \epsilon \| W_{\alpha\alpha}\|_{L^2}.
\]
On the other hand, the real and imaginary parts of $W_{\alpha\alpha}$ 
have the same regularity at frequency
$> h^{-1}$; more precisely, we can estimate
\[
\| \Re W_{\alpha\alpha}\|_{L^2}  \lesssim \|\Im W_{\alpha\alpha}\|_{L^2} + h^{-1} \| \Im W_\alpha\|_{L^2}
\lesssim 
\|\Im W_{\alpha\alpha}\|_{L^2} + h^{-\frac12} \epsilon,
\]
where the $L^2$ bound for $\Im W_\alpha$ comes from the $X$ norm.
Combining the last two bounds we get
\[
\| W_{\alpha\alpha}\|_{L^2}\lesssim \epsilon \left(\left(\frac{g}{\kappa}\right)^\frac14 + h^{-\frac12}\right).
\]
Then \eqref{want-X1} follows from the relation $h^2 g \gtrsim \kappa$, 
which says that the Bond number 
stays bounded.
\end{proof}

\subsection{Fixed time bounds at the level of the momentum}

Our objective here is to relate the Eulerian norms of $(\eta,\psi)$ 
at the momentum level in $E^\frac14$ (see \eqref{e14}) 
to their counterpart in the holomorphic setting
for $(W,R)$. Precisely, we have:

\begin{lemma} \label{l:equiv-M}
Assume that the condition \eqref{uniform} holds. 
Then we have the estimate
\begin{equation}
\| (\Im W,\partial_\alpha^{-1} \Im R) \|_{E^\frac14} \approx \| (\eta,\psi)\|_{E^\frac14}.     
\end{equation}
\end{lemma}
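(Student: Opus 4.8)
The plan is to prove the equivalence frequency-by-frequency, splitting the dyadic range at the two natural thresholds $h^{-1}$ and $\lambda_0 = \sqrt{g/\kappa}$, and in each regime identifying which term in the $E^{\frac14}$ norm is dominant. Recall that in holomorphic coordinates $\theta = \Im W$ corresponds to $\eta$, and that $R = \phi_x + i\phi_y$ corresponds to the trace of $\nabla\phi$, so that $\partial_\alpha^{-1}\Im R$ is the holomorphic counterpart of $\psi$ (up to the usual constant ambiguity, which is harmless at the level of $\dot H^s$ norms for $s > 0$). The first step is to record the pointwise change-of-variables relations between $(\eta,\psi)$ and $(\Im W, \partial_\alpha^{-1}\Im R)$: one has $\eta(x) = \Im W(\alpha)$ where $x = \alpha + \Re W(\alpha)$, and $\psi(x) = \RE Q(\alpha)$, together with $\psi_x = \RE R$ on the surface. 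Since Theorem~\ref{t:equiv} already gives $\|(\Im W, R)\|_{X^\kappa} \lesssim \epsilon$, and in particular $\|W_\alpha\|_{L^\infty} \lesssim \epsilon_0$ by \eqref{W-ppoint}, the change of variables $x \mapsto \alpha$ is a bi-Lipschitz map close to the identity, so Sobolev norms of a function and of its pullback are comparable; this handles the low-frequency part $\lambda \lesssim h^{-1}$ and more generally the bulk of the estimate, exactly as the analogous $E^0$-level equivalence was handled in \cite{AIT}.

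The substance of the argument is therefore a paraproduct analysis of the compositions $\eta = \Im W \circ (\mathrm{id} + \Re W)^{-1}$ and $\psi_x = \RE R$, where one must show that the nonlinear corrections generated by the change of variables are perturbative in each of the relevant norms. I would use the paradifferential decomposition of a composition with a near-identity diffeomorphism: writing $u \circ (\mathrm{id}+\Re W)^{-1} = u + (\text{para-linear term in } \Re W) + (\text{more regular remainder})$, the leading term matches, the para-linear term is estimated by $\|u\|_{\dot H^s}\|\Re W_\alpha\|_{L^\infty}$ plus commutators, and the remainder gains regularity. The key inputs are the frequency-envelope bounds in the Remark following Proposition~\ref{p:control-equiv}: $W_\alpha \in H^{1/2}_h \cap L^\infty$ with envelope $\{c_\lambda\}$, $\|W_{\alpha\alpha}\|_{L^2} \lesssim \epsilon (g/\kappa)^{1/4}$ from the $X_1$ bound, and $R \in X_0$ with the same envelope. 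One must check that each of the four pieces of $E^{\frac14}$ — namely $g^{-\frac14} H^{\frac14}_h$ and $\kappa^{-\frac14}H^{\frac34}_h$ for $\eta$, and $g^{\frac14}\dot H^{\frac34}_h$ and $\kappa^{\frac14}\dot H^{\frac14}_h$ for $\psi$ — is preserved, splitting at $\lambda_0$: below $\lambda_0$ the gravity-weighted norms dominate (and these are as in \cite{AIT}), while above $\lambda_0$ the capillary-weighted norms take over, and here one needs the extra $X_1$ information precisely to control the $H^{\frac34}_h$ norm of the corrections to $\eta$, since $\frac34$ derivatives of a product $T_{W_\alpha}\eta$-type term require $W_{\alpha\alpha} \in L^2$ at the high frequencies.

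The main obstacle I anticipate is the high-frequency regime $\lambda > \lambda_0$ for the $\eta$ component in $\kappa^{-\frac14}H^{\frac34}_h$: this is the genuinely new piece relative to \cite{AIT}, and it is exactly the range where the $X$-norm control on $\eta$ would be insufficient, which is why $X_1$ was built into the control norm in the first place. Concretely, in the decomposition of $\eta = \Im W \circ (\mathrm{id}+\Re W)^{-1}$, the dangerous term is the high-high paraproduct and the term where $W_{\alpha\alpha}$ lands in $L^2$; one has to balance derivative counts carefully and use that $\epsilon (g/\kappa)^{1/4} \cdot (\text{something})$ closes against $\kappa^{-1/4}\|W_{\alpha\alpha}\|_{L^2}$ — i.e. the $X_1$ normalization is chosen exactly so that this works, and the Bond number smallness $h^2 g \gtrsim \kappa$ is what makes the two sides of the threshold match up at $\lambda \approx \lambda_0$. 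The reverse inequality (from holomorphic back to Eulerian) is symmetric and is not needed, as the authors note; I would only prove the direction stated. Apart from this high-frequency $\eta$ estimate, the remaining bounds are routine paraproduct bookkeeping of the same flavor as in \cite{AIT} and \cite{H-GIT}, which I would not grind through in detail.
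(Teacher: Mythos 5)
There is a genuine gap, and it is located precisely where you placed the ``main obstacle''. For the first component there is no paraproduct or composition analysis to do at all: since $\eta$ and $\Im W$ are the same function up to the bi-Lipschitz change of variables $x=\alpha+\Re W(\alpha)$, and every norm appearing in the first slot of $E^{\frac14}$ (namely $\dot H^{\frac14}$, $\dot H^{\frac34}$ and the $h$-weighted $L^2$ pieces of $H^{\frac14}_h$, $H^{\frac34}_h$) lies between $L^2$ and $\dot H^1$, the equivalence follows from bi-Lipschitz invariance of $L^2$ and $\dot H^1$ together with interpolation; this is exactly how the paper handles it, and no $X_1$ information (no $W_{\alpha\alpha}\in L^2$) is needed there because all exponents are below one derivative. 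Your claim that the high-frequency $\kappa^{-\frac14}H^{\frac34}_h$ bound for $\eta$ is the new difficulty, requiring $W_{\alpha\alpha}$ in $L^2$, is therefore spurious.

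The real content of the lemma is the second component, and your treatment of it does not go through. The identity $\psi_x=\Re R$ is false: $\Re R$ corresponds to $\phi_x\arrowvert_{y=\eta}$, while $\psi_x=(\phi_x+\eta_x\phi_y)\arrowvert_{y=\eta}$; in holomorphic coordinates the correct relations are $\psi=\Re Q$ and $Q_\alpha=R(1+W_\alpha)$. After switching coordinates (again by the bi-Lipschitz argument), the crux is a multiplicative estimate in the negative-order sum space, namely the bound \eqref{algebra1} for $\|fR\|_{g^{\frac14}H^{-\frac14}_h+\kappa^{\frac14}H^{-\frac34}_h}$ with constant $\|f\|_{L^\infty}+(\kappa/g)^{\frac14}\|f_\alpha\|_{L^2}$, proved by duality and a paraproduct decomposition split at $\lambda_0=\sqrt{g/\kappa}$; this is where the $X_1$ bound $(\kappa/g)^{\frac14}\|W_{\alpha\alpha}\|_{L^2}\lesssim\epsilon$ actually enters. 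Moreover the estimate must be applied twice, with $f=W_\alpha$ in one direction and $f=Y=W_\alpha/(1+W_\alpha)$ in the other, because the lemma asserts a two-sided equivalence which is used to transfer bounds between the two settings; your remark that only one direction is needed confuses this lemma with the one-sided statement of Theorem~\ref{t:equiv}. Without identifying the relation $Q_\alpha=R(1+W_\alpha)$ and proving the product bound \eqref{algebra1}, the proposal does not establish the second half of the equivalence.
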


\begin{proof}
Recalling that $\eta = \Im W$, for the first part of 
the equivalence we are bounding the same function but 
in different coordinates. As the change of coordinates is bi-Lipschitz, 
the $L^2$ and $\dot H^1$ norms are equivalent, and, by interpolation, all
intermediate norms.

For the second part of the equivalence we use the relation $\psi =\Re Q$.
By the same reasoning as above, we can switch coordinates to get
\[
\| \psi \|_{ g^\frac14   \dot H^{\frac34}_h(\xR) +  \kappa^\frac14   \dot H^{\frac14}_h(\xR)}
\approx \| \Re Q \|_{ g^\frac14   \dot H^{\frac34}_h(\xR) +  h^\frac14   \dot H^{\frac14}_h(\xR)},
\]
where the first norm is relative to the  the Eulerian coordinate $x$
and the second norm is relative to the  the holomorphic coordinate $\alpha$.
It remains to relate the latter to the corresponding norm of $\partial^{-1} R$. 
Differentiating, we need to show that 
\[
\| Q_\alpha \|_{ g^{\frac14}  H^{-\frac14}_h(\xR) +  \kappa^\frac14 H^{-\frac34}_h(\xR)}
\approx \| R \|_{ g^{\frac14}  H^{-\frac14}_h(\xR) +  \kappa^\frac14 H^{-\frac34}_h(\xR)}.
\]
But here we can use the relation 
\[
Q_\alpha = R(1+W_\alpha),
\]
along with the multiplicative bound
\begin{equation}\label{algebra1}
\| f R \|_{g^\frac14  H^{-\frac14}_h(\xR) +  \kappa^\frac14 H^{-\frac34}_h(\xR)}
\lesssim (\| f\|_{L^\infty} + (\kappa/g)^\frac14 \| f_\alpha \|_{L^2}) 
\| R \|_{g^\frac14  H^{-\frac14}_h(\xR) +  \kappa^\frac14 H^{-\frac34}_h(\xR)},
\end{equation}
applied with $f = W_\alpha$ and then in the other direction with $f = Y$.

It remains to prove \eqref{algebra1}. By duality we rephrase this as 
\begin{equation}\label{algebra1-dual}
\| f R \|_{g^{-\frac14}  H^{\frac14}_h(\xR) \cap \kappa^{-\frac14} H^{\frac34}_h(\xR)}
\lesssim (\| f\|_{L^\infty}  + (\kappa/g)^\frac14 \| f_\alpha \|_{L^2}) 
\| R \|_{g^{-\frac14}H^{\frac14}_h(\xR) \cap \kappa^{-\frac14} H^{\frac34}_h(\xR)},
\end{equation}
which we approach in the same way as in the earlier proof of \eqref{algebra}.
In the paraproduct decomposition the terms $T_f R$ and $\Pi(f,R)$ are easy to estimate, 
using only the $L^\infty$ bound for $f$. The term $T_R f$ is more interesting.
At fixed frequency $\lambda$ we estimate in $L^2$ the product
\[
f_\lambda R_{< \lambda}.
\]
We split into two cases:

a) $\lambda < \lambda_0 = (g/\kappa)^\frac12$. Here we write
\[
\| f_\lambda R_{< \lambda} \|_{L^2} \lesssim \| f_\lambda \|_{L^4} 
\| R_{<\lambda}\|_{L^4} \lesssim (\kappa/g)^{-\frac18} (\| f\|_{L^\infty}  
+ (\kappa/g)^\frac14 \| f_\alpha \|_{L^2}) 
\| R \|_{g^{-\frac14}  H^{\frac14}_h(\xR)},
\]
which suffices.

b) $\lambda > \lambda_0 = (g/\kappa)^\frac12$. Then we estimate
\[
\| f_\lambda R_{< \lambda} \|_{L^2} \lesssim \| f_\lambda \|_{L^2} 
\| R_{<\lambda}\|_{L^\infty} \lesssim \lambda^{-1} (\kappa/g)^{-\frac18} [ (\kappa/g)^\frac14 
\| f_\alpha \|_{L^2}]  \| R \|_{g^{-\frac14}  H^{\frac14}_h(\xR)\cap \kappa^{-\frac14} H^{\frac34}_h(\xR)},
\]
which again suffices.

\end{proof}

\subsection{ Vertical strips in Eulerian vs holomorphic coordinates.} 

In our main result, the local energy functionals are defined using vertical strips
in Eulerian coordinates. On the other hand, for the multilinear
analysis in our error estimates in the last section, it would be easier to use vertical 
strips in holomorphic coordinates. To switch from one to 
the other we need to estimate the horizontal drift between the two strips in depth.  
As the conformal map is biLipschitz, it suffices to compare the 
centers of the two strips.  It is more convenient to do this in the reverse order, 
and compare the Eulerian image of the holomorphic vertical section with 
the Eulerian vertical section. This analysis was carried out in \cite{AIT}, 
and we recall the result here:

\begin{proposition} \label{p:switch-strips}
Let $(x_0, \eta(x_0))  = Z(\alpha_0,0)$, respectively $(\alpha_0,0)$ 
be the coordinates of a point on the free surface in Eulerian, 
respectively  holomorphic coordinates. 
Assume that \eqref{uniform} holds, and let $\{c_\lambda\}$ 
be the control frequency envelope in  Definition~\ref{def-fe}. 
Then we have the uniform bounds:
\begin{equation}
|\Re Z( \alpha_0, \beta) - x_0 + \beta \Im W_{\alpha}(\alpha_0,\beta)| 
\lesssim  c_\lambda, \qquad |\beta| \approx \lambda^{-1}.
\end{equation}
\end{proposition}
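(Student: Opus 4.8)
The plan is to recognize the quantity on the left as a second-order Taylor remainder for the holomorphic function $W$ along the vertical segment joining $\alpha_0$ to $\alpha_0+i\beta$, and then to control that remainder pointwise using the frequency envelope bounds for $W_\alpha$ together with the exponential localization of the harmonic extension into the strip. First I would note that, since $Z=\alpha+W$ and $\partial_\beta\Re W=-\Im W_\alpha$ by the holomorphy of $W$, one has $\Re Z(\alpha_0,\beta)-x_0=\Re W(\alpha_0+i\beta)-\Re W(\alpha_0)$, so that
\[
\Re Z(\alpha_0,\beta)-x_0+\beta\,\Im W_\alpha(\alpha_0,\beta)=\Re\big[\,W(\alpha_0+i\beta)-W(\alpha_0)-i\beta\,W_\alpha(\alpha_0+i\beta)\,\big].
\]
Next, writing $W(\alpha_0+i\beta)-W(\alpha_0)=\int_0^\beta iW_\alpha(\alpha_0+is)\,ds$ and $i\beta\,W_\alpha(\alpha_0+i\beta)=\int_0^\beta iW_\alpha(\alpha_0+i\beta)\,ds$, subtracting, applying the fundamental theorem of calculus a second time to $W_\alpha(\alpha_0+is)-W_\alpha(\alpha_0+i\beta)$, and rearranging by Fubini, I would arrive at the clean integral remainder
\[
W(\alpha_0+i\beta)-W(\alpha_0)-i\beta\,W_\alpha(\alpha_0+i\beta)=\int_0^\beta r\,W_{\alpha\alpha}(\alpha_0+ir)\,dr,
\]
which reduces the proposition to the estimate $\int_0^{|\beta|} r\,\|W_{\alpha\alpha}(\cdot,r)\|_{L^\infty}\,dr\lesssim c_\lambda$ when $|\beta|\approx\lambda^{-1}$.

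For the pointwise control of $W_{\alpha\alpha}$ inside the strip I would invoke Proposition~\ref{p:control-equiv} and the remark following it, which give that the control envelope $\{c_\mu\}$ is a frequency envelope for $W_\alpha$ both in $L^\infty$ and in $H^{\frac12}_h$. Since $W_\alpha$ is holomorphic, its trace at depth $r$ is recovered from its trace on the top through the multiplier $p_N(\xi,r)$, which in the regime $|r|\le h$ relevant here (note that $\lambda>h^{-1}$ forces $|\beta|<h$) damps the dyadic frequency $\mu$ by $\sim e^{-c\mu|r|}$; one derivative in $\alpha$ then yields $\|W_{\alpha\alpha}(\cdot,r)\|_{L^\infty}\lesssim\sum_{\mu\ge h^{-1}}\mu\,e^{-c\mu|r|}c_\mu$. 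Plugging this into the reduced estimate and using $\int_0^{|\beta|} r\,e^{-c\mu r}\,dr\lesssim\min(|\beta|^2,\mu^{-2})$ turns the problem into bounding $\sum_\mu c_\mu\,\mu\,\min(|\beta|^2,\mu^{-2})$; with $|\beta|\approx\lambda^{-1}$ and the slowly varying property $c_\mu\le c_\lambda\max((\mu/\lambda)^\delta,(\lambda/\mu)^\delta)$, the two dyadic tails $\mu\le\lambda$ and $\mu>\lambda$ each sum geometrically, with the dominant term coming from $\mu\approx\lambda$, and deliver the stated bound.

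I expect the main obstacle to be the lowest frequency range, where the control envelope is not even defined, so the contribution of $P_{\le h^{-1}}W$ to the remainder integral has to be dealt with separately. The key point there is that $|\beta|\le h$, so that on the frequency band $|\xi|\lesssim h^{-1}$ the multiplier $p_N(\xi,\beta)$ is essentially constant in $\beta$; hence $P_{\le h^{-1}}W$ is, up to a negligible correction, affine in $\beta$ on $[\beta,0]$, its second vertical derivative is $O(h^{-2})$ times the amplitude of $P_{\le h^{-1}}W$ (which is already small by \eqref{uniform}), and the remainder integral it produces is $O(h^{-2}|\beta|^2)$ times that amplitude, hence harmless. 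I would also need to check that the finite-depth corrections concealed in $p_N$, and in the dictionary between $(\Im W,R)$ and $(\eta,\nabla\phi\arrowvert_{y=\eta})$, do not spoil the exponential gain exploited above; this relies on the uniform conformal-map bounds of Theorem~\ref{t:equiv}, just as in the analogous computation carried out in \cite{AIT}.
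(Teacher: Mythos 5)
You should first be aware that the paper does not actually prove Proposition~\ref{p:switch-strips}: it is quoted from \cite{AIT} ("This analysis was carried out in \cite{AIT}, and we recall the result here"), so the only in-paper "proof" is a citation. Your skeleton — rewrite the left-hand side as $\Re\bigl[W(\alpha_0+i\beta)-W(\alpha_0)-i\beta W_\alpha(\alpha_0+i\beta)\bigr]$, express it as a vertical Taylor remainder $\int_0^\beta r\,W_{\alpha\alpha}(\alpha_0+ir)\,dr$, and estimate $W_{\alpha\alpha}$ in depth using the frequency envelope for $W_\alpha$ together with the exponential damping of $p_N,p_D$ — is the natural route and the algebraic reduction is correct.

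The gap is in the final dyadic summation, and it occurs exactly in the regime this paper cares about (finite depth, $1\ll|\beta|\le h$). With the inputs you invoke, namely $\|P_\mu W_\alpha\|_{L^\infty}\lesssim c_\mu$ from Proposition~\ref{p:control-equiv} and its remark, your reduction gives
\begin{equation*}
\sum_{\mu\ge h^{-1}} c_\mu\,\mu\,\min\bigl(|\beta|^2,\mu^{-2}\bigr)\ \approx\ c_\lambda\,\lambda^{-1}\ =\ c_\lambda\,|\beta|,\qquad |\beta|\approx\lambda^{-1},
\end{equation*}
with the dominant contribution at $\mu\approx\lambda$; this is $\lesssim c_\lambda$ only when $|\beta|\lesssim 1$, and loses a factor as large as $h$ when $h^{-1}\le\lambda<1$. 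The loss is not an artifact of sloppy summation: a single admissible wave packet at frequency $\lambda$ (amplitude $\approx\lambda^{-1}c_\lambda$, width $\approx\lambda^{-1}$, which saturates $\|P_\lambda\eta\|_{H^{3/2}_h}\approx c_\lambda$) produces a Taylor remainder of size comparable to the amplitude, i.e.\ $\approx\lambda^{-1}c_\lambda$, at depth $|\beta|\approx\lambda^{-1}$. So no purely pointwise argument based on the $L^\infty$ envelope bound for $W_\alpha$ can yield the stated $c_\lambda$ bound at low frequencies; you would need a genuinely stronger low-frequency input (an improvement of the $L^\infty$ bound by a full factor of $\mu$), or you must track down the precise normalization of the envelope in \cite{AIT} and match your target to it. Relatedly, your treatment of the band $|\xi|\lesssim h^{-1}$ appeals to "the amplitude of $P_{\le h^{-1}}W$ (which is already small by \eqref{uniform})": this is not something \eqref{uniform} controls, since only $\Im W$, $W_\alpha$ and $R$ are controlled there, while $\Re W$ at low frequency is obtained from $\Im W$ through $\Til_h^{-1}$, which amplifies frequencies below $h^{-1}$; you can reroute through $W_\alpha$ at the cost of a single factor $h^{-1}$, but that again yields a bound of the form $|\beta|^2h^{-1}c\lesssim|\beta|\,c$, i.e.\ the same $|\beta|$ loss. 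As written, the proof establishes the proposition only for $|\beta|\lesssim1$, and the case $1\lesssim|\beta|\le h$ remains open.
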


As a corollary, we see that the distance between the two 
strip centers grows at most linearly:

\begin{corollary}
Under the same assumptions as in the above proposition we have
\begin{equation}
|\Re Z( \alpha_0, \beta) - x_0| \lesssim \epsilon_0 |\beta|.
\end{equation}
\end{corollary}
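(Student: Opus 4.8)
The plan is to exploit that $\Re Z(\alpha_0,\cdot) - x_0$ is, up to sign, a primitive in $\beta$ of $\Im W_\alpha(\alpha_0,\cdot)$, and then to bound $\Im W_\alpha$ pointwise in the strip. Recalling $Z = \alpha + i\beta + W$ and $x_0 = \Re Z(\alpha_0,0) = \alpha_0 + \Re W(\alpha_0,0)$, I would first write
\[
\Re Z(\alpha_0,\beta) - x_0 = \Re W(\alpha_0,\beta) - \Re W(\alpha_0,0) = \int_0^\beta \partial_\sigma \Re W(\alpha_0,\sigma)\, d\sigma .
\]
Since $W$ is holomorphic in $z = \alpha + i\beta$, the Cauchy--Riemann relations give $\partial_\beta W = i W_\alpha$, hence $\partial_\sigma \Re W = -\Im W_\alpha$, and therefore
\[
\Re Z(\alpha_0,\beta) - x_0 = -\int_0^\beta \Im W_\alpha(\alpha_0,\sigma)\, d\sigma .
\]

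It then remains to show that $\sup_{\sigma \in (-h,0)} \| \Im W_\alpha(\cdot,\sigma)\|_{L^\infty} \lesssim \epsilon_0$. On the top $\{\sigma = 0\}$ this is precisely the pointwise bound \eqref{W-ppoint}, a consequence of the smallness hypothesis \eqref{uniform}. For interior depths I would use that $\Im W_\alpha$ is harmonic in $S$ and, since $\Im W = \theta$ vanishes on the bottom, satisfies the homogeneous Dirichlet condition $\Im W_\alpha(\cdot,-h) = 0$; equivalently $\Im W_\alpha(\cdot,\sigma) = P_D(\sigma, D)\big[\Im W_\alpha(\cdot,0)\big]$. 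Since $W_\alpha \to 0$ as $|\alpha|\to\infty$, the maximum principle for harmonic functions in the strip (or the fact that $P_D(\sigma,D)$ is given by a positive kernel of mass $\le 1$) yields $\|\Im W_\alpha(\cdot,\sigma)\|_{L^\infty} \le \|\Im W_\alpha(\cdot,0)\|_{L^\infty} \lesssim \epsilon_0$ for all $\sigma \in (-h,0)$.

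Combining the two steps gives $|\Re Z(\alpha_0,\beta) - x_0| \le |\beta|\, \sup_{\sigma \in (-h,0)}\|\Im W_\alpha(\cdot,\sigma)\|_{L^\infty} \lesssim \epsilon_0 |\beta|$, as claimed. One could instead start from Proposition~\ref{p:switch-strips}, which already identifies $\Re Z(\alpha_0,\beta) - x_0$ with $-\beta\,\Im W_\alpha(\alpha_0,\beta)$ modulo an error $O(c_\lambda)$, and then invoke the same interior bound on $\Im W_\alpha$; however $c_\lambda$ is only $\ell^1$-summable rather than of size $\epsilon_0|\beta|$, so the self-contained computation above is cleaner. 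The only step requiring genuine care is transferring the top bound \eqref{W-ppoint} on $\Im W_\alpha$ to all depths $\sigma \in (-h,0)$: this is where the Dirichlet condition on the bottom and the maximum principle do the work, and it is the step I would write out in full.
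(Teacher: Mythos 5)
Your proof is correct, and it is a genuinely more self-contained route than the paper's. The paper obtains this corollary as an immediate consequence of Proposition~\ref{p:switch-strips}: one combines the estimate there with the pointwise bound \eqref{W-ppoint} to control $|\beta\,\Im W_\alpha(\alpha_0,\beta)|\lesssim \epsilon_0|\beta|$, and absorbs the error $c_\lambda\lesssim\epsilon_0$, which is harmless in the regime $|\beta|\gtrsim 1$ where the corollary is actually used (comparing strip centers in depth against unit-size windows). You correctly observe that this derivation does not literally give $\epsilon_0|\beta|$ when $|\beta|\ll1$, and instead you prove the stated bound uniformly in depth by writing $\Re Z(\alpha_0,\beta)-x_0=\Re W(\alpha_0,\beta)-\Re W(\alpha_0,0)$, using Cauchy--Riemann to identify $\partial_\beta\Re W=-\Im W_\alpha$, and then bounding $\Im W_\alpha$ in $L^\infty$ throughout the strip. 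What your approach buys is exactly this uniformity in $\beta$ and independence from the frequency-envelope machinery of Proposition~\ref{p:switch-strips}; what the paper's route buys is brevity, since the proposition (imported from \cite{AIT}) already isolates the leading drift term. The one step you rightly flag as needing care --- transferring \eqref{W-ppoint} from the top to all depths --- is handled adequately: $\Im W_\alpha$ vanishes on the bottom because $W$ is real there, so $\Im W_\alpha(\cdot,\sigma)=P_D(\sigma,D)\Im W_\alpha(\cdot,0)$, and the positivity of the Dirichlet harmonic-measure kernel (with total mass $(\sigma+h)/h\le 1$) gives the sup bound; I would lean on that kernel representation rather than on decay of $W_\alpha$ as $|\alpha|\to\infty$, which is not guaranteed pointwise by the hypotheses, though Phragm\'en--Lindel\"of for bounded harmonic functions also suffices. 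Alternatively, the same interior bound follows from the paper's own Bernstein argument, since the extension at depth $\sigma$ is a frequency $\lesssim|\sigma|^{-1}$ multiplier applied to the top data.
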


\subsection{ The horizontal gauge invariance}

Here we briefly discuss the gauge freedom due to the fact that $\Re F$
is a-priori only uniquely determined up to constants.  In the infinite
depth case this gauge freedom is removed by making the assumption $F
\in L^2$. In the finite depth case (see \cite{H-GIT}) instead this is
more arbitrarily removed by setting $F(\alpha = -\infty) = 0$.

In the present paper no choice is necessary for our main result,
as well as for most of the proof. However, such a choice was made 
for convenience in \cite{AIT}, whose results we also apply here.
Thus we briefly recall it.

Assume first that we have a finite depth.
We start with a point $x_0 \in \R$ where our local energy estimate 
is centered.   Then we resolve the gauge invariance 
with respect to horizontal translations by setting $\alpha(x_0) = x_0$,
which corresponds to setting $ \Re W(x_0)= 0$. 
In dynamical terms, this implies that  the real part of $F$ is 
uniquely determined by 
\[
0 = \Re W_t(x_0) = \Re ( F(1+W_\alpha))(x_0),
\]
which yields
\[
\Re F(x_0) = \Im F(x,0) \frac{\Im W_\alpha(x_0)}{1+ \Re W_\alpha(x_0)}.
\]

In the infinite depth case, the canonical choice for $F$ is the one
vanishing at infinity. This corresponds to a moving location in the
$\alpha$ variable. We can still rectify this following the finite
depth model, at the expense of introducing a constant component in
both $\Re W$ and in $F$. We will follow this convention in the paper,
in order to insure that our infinite depth computation is an exact
limit of the finite depth case.

\section{Local energy bounds in holomorphic coordinates}
\label{s:switch}

\subsection{Notations}
We begin by transferring the 
local energy bounds to the holomorphic setting. Recall that 
in the Eulerian setting, they are equivalently defined as 
\[
\|(\eta,\psi)\|_{LE^\kappa} := g^\frac12 \|\eta\|_{LE^0} + \kappa^\frac12 \|\eta_x\|_{LE^0} 
+ \| \nabla \phi\|_{LE^{-\frac12}},
\]
where 
\[
\|\eta\|_{LE^0} := \sup_{x_0 \in \R} \| \eta\|_{L^2(\HS(x_0))},
\qquad
\| \nabla \phi\|_{LE^{-\frac12}} := \sup_{x_0 \in \R} \| \nabla \phi\|_{L^2(\HVS(x_0))}.
\]
Here $\HS(x_0)$, respectively $\HVS(x_0)$ represent the Eulerian strips
\[
\HS(x_0) := \{ [0,T] \times [x_0-1,x_0+1] \}, \qquad
\HVS(x_0) := \HS(x_0)\times [-h,0].
\]

In holomorphic coordinates the 
functions $\eta$ and $\nabla \phi$ are given by $\Im W$ and $R$. 
Thus we seek to replace the above local energy norm with
\[
\|(W,R)\|_{LE} := \|\Im W\|_{LE^0} + \| R \|_{LE^{-\frac12}},
\]
with
\[
\|\Im W\|_{LE^0} := \sup_{x_0 \in \R} \| \Im W \|_{L^2(\HS_h(x_0))},
\qquad
\| R \|_{LE^{-\frac12}} := 
\sup_{x_0 \in \R} \| R \|_{L^2(\HVS_h(x_0))}.
\]
Here $\HS_h(x_0)$ and $\HVS_h(x_0)$ represent the holomorphic strips given by
\[
\HS_h(x_0) := \{(t,\alpha): t \in [0,T], \ \alpha \in [\alpha_0-1,\alpha_0+1]\}, \qquad
\HVS_h(x_0) := \HS(x_0) \times [-h,0],
\]
where $\alpha_0 = \alpha_0(t,x_0)$ represents the holomorphic 
coordinate of $x_0$, which in general will depend on $t$. 

We call the attention to the fact that, 
while the strips $\HS_h(x_0)$ on the top roughly correspond 
to the image of $\HS(x_0)$ in holomorphic coordinates, 
this is not the case for the strips $\HVS_h(x_0)$ 
relative to $\HVS(x_0)$. In depth, there may be a horizontal drift, 
which is estimated by means of Proposition~\ref{p:switch-strips}.

We can now state the following equivalence.
\begin{proposition}\label{p:equiv}
Assuming the uniform bound \eqref{uniform}, we have the equivalence:
\begin{equation}\label{equiv}
\|(\eta,\psi)\|_{LE^\kappa} \approx \|(W,R)\|_{LE^\kappa}.
\end{equation}
\end{proposition}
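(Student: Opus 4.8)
The plan is to prove the equivalence \eqref{equiv} by splitting $\|\cdot\|_{LE^\kappa}$ into its three constituent pieces and comparing each one separately: the $g^\frac12$‑weighted $\|\eta\|_{LE^0}$ versus $\|\Im W\|_{LE^0}$; the $\kappa^\frac12$‑weighted $\|\eta_x\|_{LE^0}$ versus $\|\Im W_\alpha\|_{LE^0}$, the latter being the holomorphic counterpart of $\eta_x$; and $\|\nabla\phi\|_{LE^{-\frac12}}$ versus $\|R\|_{LE^{-\frac12}}$. For all three the common input is that, under \eqref{uniform}, Theorem~\ref{t:equiv} together with \eqref{W-ppoint} give $J=|1+W_\alpha|^2=1+O(\epsilon_0)$, so the conformal map $Z(t,\cdot)\colon\Strip\to\Omega(t)$ is globally biLipschitz with constants as close to $1$ as we wish, and so is its restriction $\alpha\mapsto x=\Re Z(t,\alpha,0)$ to the top (where $\partial_\alpha x=1+\Re W_\alpha$).

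The $\eta$ and $\eta_x$ pieces see only the top and are routine. Since the boundary map is biLipschitz with constant $1+O(\epsilon_0)$, the image of a unit $\alpha$‑window is an $x$‑interval of length within $O(\epsilon_0)$ of $2$, hence is covered by and covers $O(1)$ unit $x$‑windows; taking suprema over windows therefore yields $\|\eta\|_{LE^0}\approx\|\Im W\|_{LE^0}$, as $\eta=\Im W$ is a single function written in two coordinate systems. For $\eta_x$ one adds the chain rule $\eta_x=(1+\Re W_\alpha)^{-1}\Im W_\alpha$ on the top, whose prefactor is $1+O(\epsilon_0)$ in $L^\infty$, so that $|\eta_x|$ and $|\Im W_\alpha|$ agree up to $1+O(\epsilon_0)$ at corresponding points, and the same argument applies.

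The substantive piece is $\|\nabla\phi\|_{LE^{-\frac12}}\approx\|R\|_{LE^{-\frac12}}$. Here $R(t,\alpha,\beta)=(\phi_x+i\phi_y)(t,Z(t,\alpha+i\beta))$ with $|R|^2=|\nabla\phi|^2$ at corresponding points, so the area formula for $Z(t,\cdot)$ gives $\|R\|_{L^2(\HVS_h(x_0))}^2\approx\int_0^T\iint_{\mathcal R_{x_0}(t)}|\nabla\phi|^2\, dy\, dx\, dt$, where $\mathcal R_{x_0}(t):=Z(t,\{\,|\alpha-\alpha_0|\le1,\ -h\le\beta\le0\,\})$. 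By Proposition~\ref{p:switch-strips} and its corollary, $\mathcal R_{x_0}(t)$ is a near‑vertical slanted strip: it has width $2+O(\epsilon_0)$ at every depth, and the horizontal coordinate of its center at depth $\beta$ is $x_0+d(t,\beta)$ with $|d(t,\beta)|\lesssim\epsilon_0|\beta|$ and, crucially, $|\partial_\beta d|\lesssim\epsilon_0$. To compare $\int_0^T\iint_{\mathcal R_{x_0}}|\nabla\phi|^2$ with $\sup_{x_1}\|\nabla\phi\|^2_{L^2(\HVS(x_1))}$ one decomposes $[-h,0]$ into dyadic depth layers $|\beta|\approx2^k$. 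In the $k$‑th layer $\nabla\phi$ is frequency‑localized to $|\xi|\lesssim2^{-k}$, since the Neumann Poisson symbol $p_N(\xi,\beta)=\cosh((\beta+h)\xi)/\cosh(h\xi)$ decays like $e^{-|\beta||\xi|}$, so it varies on the scale $2^k$; the slanted window is displaced from the vertical one by only $\lesssim\epsilon_0 2^k$, small on that scale, so the layer integral over the slanted window differs from that over the vertical window by at most $O(\epsilon_0)$ times the local mass of $\nabla\phi$ near the layer. Because $\nabla\phi$ is harmonic with a Neumann condition on the bottom, its Poisson representation $\nabla\phi(\cdot,\beta)=P_N(\beta,D)\nabla\phi(\cdot,0)$ with $L^1$‑normalized kernel controls its local $L^2$ mass at depth $\beta$ by that of the trace, uniformly in $\beta$. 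Running the change of variables and the layer estimates in both directions then yields the two‑sided comparison, the weights $g^\frac12$, $\kappa^\frac12$ being common to the two sides and so irrelevant.

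I expect the main difficulty to lie precisely in the last comparison, and in its uniformity in the depth $h$: the drift $d(t,\beta)$ grows linearly in $|\beta|$ and is unbounded as $h\to\infty$, so a naive covering of $\mathcal R_{x_0}$ by unit Eulerian strips loses a factor of $h$. The key point to exploit is that it is the drift \emph{rate}, not its size, that is $O(\epsilon_0)$ — so $\mathcal R_{x_0}$ is a near‑vertical shear of an honest vertical strip — and this has to be combined with the harmonic structure, which both provides the scale‑$|\beta|$ localization at depth $\beta$ and dominates the local mass at depth $\beta$ by that of the trace. Arranging the summation over all depth layers so that the accumulated error is genuinely $O(\epsilon_0)$ times the local energy, with a constant that does not degenerate as $h\to\infty$ (nor as $\kappa\to0$), is the delicate bookkeeping at the heart of the proof.
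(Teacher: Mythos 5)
Your decomposition into the three pieces, and your treatment of the two boundary pieces, is exactly the paper's: $\eta=\Im W$ in two coordinate systems plus the bi-Lipschitz property gives $\|\eta\|_{LE^0}\approx\|\Im W\|_{LE^0}$, and the chain rule on the top together with $J=1+O(\epsilon)$ gives $\|\eta_x\|_{LE^0}\approx\|\Im W_\alpha\|_{LE^0}$; this is all the paper actually proves here, since the remaining equivalence $\|\nabla\phi\|_{LE^{-\frac12}}\approx\|R\|_{LE^{-\frac12}}$ is not re-proved but quoted from \cite{AIT}, where it was established in detail. The issue is with the piece you chose to argue from scratch: your argument for it has a genuine gap, which you yourself flag but do not close.

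Concretely, the step ``the slanted window is displaced from the vertical one by only $\lesssim\epsilon_0 2^k$, small on that scale, so the layer integral over the slanted window differs from that over the vertical window by at most $O(\epsilon_0)$ times the local mass near the layer'' does not hold as stated: the windows have width $O(1)$ while the drift at depth $|\beta|\approx 2^k$ can be of size $\epsilon_0 2^k$, so for $2^k\gtrsim \epsilon_0^{-1}$ the image window and the vertical window centered at $x_0$ need not even intersect, and in addition the drift center $\alpha_0(t,x_0)$ (hence $d(t,\beta)$) moves in time, so a single Eulerian strip does not cover a given layer over $[0,T]$. The comparison therefore has to pass through the supremum over window centers, and then the summation over the dyadic depth layers (up to $\sim\log h$ of them, or worse once the time dependence is handled) is precisely where uniformity in $h$ is at stake; the heuristic that $P_{\lesssim 2^{-k}}$-localized functions are ``roughly constant'' on scale $2^k$ does not by itself control the difference of $L^2$ masses over two unit windows separated by $\epsilon_0 2^k$ without a maximal/square-function type argument over all translates. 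This is exactly the analysis carried out in \cite{AIT}, and your proposal acknowledges it as ``the delicate bookkeeping at the heart of the proof'' without supplying it; as written, the kinetic-energy equivalence is asserted rather than proved. Either quote the result from \cite{AIT}, as the paper does, or supply that layer-by-layer argument in full.
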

\begin{proof}
Here the correspondence between the $LE^0$ norms of $\eta$ and $\Im W$ 
is straightforward due to the bi-Lipschitz property of the conformal map. 
However, the correspondence between the $LE^{-\frac12}$ norms of $\nabla \phi$ and $R$ 
is less obvious, and was studied in detail in \cite{AIT}.

Moving on to the $LE^0$ norms of $\eta_x$ and $\Im W$, we have 
\[
\eta_x = J^{-\frac12} \Im W_\alpha.
\]
Since $J= 1+O(\epsilon)$ and the correspondence between 
the two sets of coordinates is bi-Lipschitz, it immediately follows that 
$\| \eta_x \|_{LE^0} \approx \| W_\alpha \|_{LE^0}$.
\end{proof}

One difference between the norms for $\Im W$ and for $R$ is that they
are expressed in terms of the size of the function on the top,
respectively in depth. For the purpose of multilinear estimates later
on we will need access to both types of norms.
Since the local energy norms are defined using the unit spatial
scale, in order to describe the behavior of functions in these spaces
we will differentiate between high frequencies and low frequencies. We
begin with functions on the top:

\medskip

a) \textbf{ High frequency characterization on top.} Here we will use local norms
on the top, for which we will use the abbreviated notation
\[
\| u \|_{L^2_t H^s_{loc}} := \sup_{x_0 \in \R } \| u\|_{L^2_t H^s_{\alpha}([\alpha_0-1, \alpha_0+1])},
\]
where again $\alpha_0 = \alpha_0(x_0,t)$.

\medskip

b) \textbf{ Low frequency characterization on top.} Here we will use local norms
on the top to describe the frequency $\lambda$ 
or $\leq \lambda$ part of functions, where $\lambda < 1$ is a dyadic frequency. By the 
uncertainty principle such bounds should be uniform on the $\lambda^{-1}$ 
spatial scale. Then it is natural to use the following norms:
\[
\| u \|_{L^2_t L^\infty_{loc}(B_\lambda)} := \sup_{x_0 \in \R } \| u\|_{L^2_t L^\infty_{\alpha}(B_\lambda(x_0))},
\]
where
\[
B_\lambda(x_0) := \{ \alpha \in \R: \ |\alpha-\alpha_0| \lesssim \lambda^{-1} \}.
\]
We remark that the local norms in $a)$ correspond 
exactly to the $B_{\lambda}(x_0)$ norms with $\lambda =1$.

\bigskip
Next we consider functions in the strip which are harmonic extensions 
of functions on the top. To measure them we will use function spaces as follows:

\medskip

a1) \textbf{ High frequency characterization in strip.} 
Here we will use local norms on regions with depth at most $1$, 
for which we will use the abbreviated notation
\[
\| u \|_{L^2_t X_{loc}(A_1)} := \sup_{x_0 \in \R } \| u\|_{L^2_t X(A_1(x_0))},
\]
where $X$ will represent various Sobolev norms and
\[
A_1(x_0) := \{(\alpha,\beta): |\beta| \lesssim 1,\ |\alpha-\alpha_0| \lesssim 1 \}.
\]

\medskip

b1) \textbf{ Low frequency characterization in strip.} Here a 
frequency $\lambda < 1$ is associated 
with depths $|\beta| \approx \lambda^{-1}$. 
Thus, we define the regions
\[
A_{\lambda}(x_0) = \{ (\alpha,\beta): |\beta| \approx \lambda^{-1},\  |\alpha-\alpha_0|
\lesssim \lambda^{-1} \}, \qquad \lambda < 1,
\]
as well as
\[
\begin{aligned}
&\mathbf{B}_1(x_0) := \{ (\alpha,\beta); \ |\alpha - \alpha_0| \leq 1, \ \beta \in [-1,0] \} ,\\
&\mathbf{B}_\lambda (x_0)
:= \{ (\alpha,\beta); \ |\alpha - \alpha_0| \leq \lambda^{-1}, \ \beta \in [-\lambda^{-1},0] \}, 
\mbox{ for } \lambda < 1. 
\end{aligned}
\]
In these regions we use the uniform norms,
\[
\| u \|_{L^2_t L^\infty_{loc}(A_\lambda)} := \sup_{x_0 \in \R }
\| u\|_{L^2_t L^\infty_{\alpha,\beta}(A_\lambda(x_0))},
\]
and similarly for $\mathbf{B}^1$ and $\mathbf{B}_\lambda$. 



\subsection{Multipliers and Bernstein's inequality in uniform norms}
Here we recall the results of \cite{AIT} describing how multipliers 
act on the uniform spaces defined above. We will work 
with a multiplier $M_{\lambda}(D)$ associated to a dyadic frequency $\lambda$. 
In order to be able to use the bounds in several circumstances, 
we make a weak assumption on their (Lipschitz) 
symbols $m_{\lambda}(\xi)$: 
\begin{equation}
\label{m-symbol}
\begin{aligned}
|m_{\lambda}(\xi)| \lesssim  \ (1+\lambda^{-1}|\xi|)^{-3}, \ 
\mbox{  and  }\ 
| \partial_\xi^{k+1} m_{\lambda}(\xi) | \lesssim c_k  \ |\xi|^{-k}(1+\lambda^{-1}|\xi|)^{-4}. 
\end{aligned}
\end{equation}
Examples of such symbols include
\begin{itemize}
\item Littlewood-Paley localization operators $P_{\lambda}$, $P_{\leq \lambda}$.
\item The multipliers $p_D(\beta, D)$ and $p_N(\beta, D)$ in subsection~\ref{ss:laplace}
with $|\beta| \approx \lambda^{-1}$.
\end{itemize}

We will separately consider high frequencies, where 
we work with the spaces  $L^2_t L^p_{loc}$, 
and low frequencies, where we work with the spaces  
$L^2_t L^p_{loc}(B_\lambda)$ associated with a dyadic 
frequency $1/h \leq \lambda \leq 1$.

\bigskip

\textbf{A. High frequencies.}
Here we consider a dyadic high frequency $\lambda \geq 1$,
and seek to understand how multipliers $M_{\lambda}(D)$
associated to frequency $\lambda$ act on the spaces $L^2_t L^p_{loc}$.

\begin{lemma}\label{l:bern-loc-hi}
Let $\lambda \geq 1$ and $1 \leq p \leq q \leq \infty$.  Then
\begin{equation}
\| M_{\lambda}(D) \|_{L^2_t L^p_{loc} \to 
L^2_t L^q_{loc}} \lesssim \lambda^{\frac1p-\frac1q}.
\end{equation}
\end{lemma}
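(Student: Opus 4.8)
\emph{Proof proposal.} The plan is to reduce the statement to two standard kernel facts and then run a near/far decomposition adapted to the unit spatial windows that define the $L^2_tL^p_{loc}$ norm. Since $M_\lambda(D)$ acts only in the spatial variable $\alpha$, I will treat it as convolution in $\alpha$ with a kernel $K_\lambda$, and write $K_\lambda(\alpha)=\lambda K(\lambda\alpha)$ where $K$ is the kernel of the normalized symbol $n(\xi):=m_\lambda(\lambda\xi)$. From \eqref{m-symbol} one gets $\|n\|_{L^1}\lesssim 1$, hence $\|K\|_{L^\infty}\lesssim 1$, and an integration by parts argument gives the off-diagonal decay $|K(\alpha)|\lesssim|\alpha|^{-2}$ for $|\alpha|\gtrsim 1$, up to a harmless logarithm; in particular $K\in L^1\cap L^\infty\subset L^r$ for every $r\in[1,\infty]$. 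Young's inequality then yields the \emph{global} Bernstein bound $\|M_\lambda(D)\|_{L^p(\R)\to L^q(\R)}\lesssim\|K\|_{L^r}\,\lambda^{1/p-1/q}\lesssim\lambda^{1/p-1/q}$, where $1/r=1-(1/p-1/q)$.

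With these in hand, I would fix a base point $x_0$, let $I(t):=[\alpha_0(x_0,t)-1,\alpha_0(x_0,t)+1]$ be the corresponding moving unit window, estimate $\|M_\lambda u\|_{L^2_tL^q(I(\cdot))}$, and take the supremum over $x_0$ only at the very end. Split, at each time, $u(t,\cdot)=u(t,\cdot)\mathbf 1_{5I(t)}+u(t,\cdot)\mathbf 1_{\R\setminus 5I(t)}=:u^t_{\mathrm{near}}+u^t_{\mathrm{far}}$. For $u^t_{\mathrm{near}}$ the global Bernstein bound gives $\|M_\lambda u^t_{\mathrm{near}}(t,\cdot)\|_{L^q(\R)}\lesssim\lambda^{1/p-1/q}\|u(t,\cdot)\|_{L^p(5I(t))}$; since the change of coordinates is bi-Lipschitz uniformly in $t$ (a consequence of \eqref{uniform}, see \eqref{W-ppoint}), the enlarged window $5I(t)$ is covered by $O(1)$ unit windows of the same type, so after taking the $L^2_t$ norm and using Minkowski's inequality this contributes $\lesssim\lambda^{1/p-1/q}\|u\|_{L^2_tL^p_{loc}}$.

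For $u^t_{\mathrm{far}}$, for $\alpha\in I(t)$ I would decompose the convolution into dyadic annuli $\{|\alpha-\alpha'|\approx 2^k\}$, $k\ge 0$: on the $k$-th annulus the decay bound gives $|K_\lambda|\lesssim\lambda(\lambda 2^k)^{-2}=\lambda^{-1}2^{-2k}$, and the annulus is covered by $O(2^k)$ unit windows $I_j(t)$, so $\int_{|\alpha-\alpha'|\approx 2^k}|u(t,\alpha')|\,d\alpha'\lesssim\sum_{|j|\lesssim 2^k}\|u(t,\cdot)\|_{L^p(I_j(t))}$. This gives the pointwise-in-$(t,\alpha)$ bound $|M_\lambda u^t_{\mathrm{far}}(t,\alpha)|\lesssim\lambda^{-1}\sum_{k\ge 0}2^{-2k}\sum_{|j|\lesssim 2^k}\|u(t,\cdot)\|_{L^p(I_j(t))}$; taking $\|\cdot\|_{L^q(I(t))}$ (free, since $|I(t)|\approx 1$) and then $\|\cdot\|_{L^2_t}$ and applying Minkowski's inequality to the double sum, each term is $\le\|u\|_{L^2_tL^p_{loc}}$ with $O(2^k)$ of them at level $k$, so the total is $\lesssim\lambda^{-1}\big(\sum_k 2^{-k}\big)\|u\|_{L^2_tL^p_{loc}}\lesssim\lambda^{-1}\|u\|_{L^2_tL^p_{loc}}$. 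Since $\lambda\ge 1$ and $1/p-1/q\ge 0\ge -1$ we have $\lambda^{-1}\le\lambda^{1/p-1/q}$, so this is acceptable. Combining the near and far contributions and taking the supremum over $x_0$ completes the argument.

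The main obstacle, as I see it, is the off-diagonal kernel decay: \eqref{m-symbol} is a weak symbol class whose second derivative is allowed to blow up like $|\xi|^{-1}$ at the origin, so one cannot simply integrate by parts twice over all of $\R$; instead one integrates by parts once, splits the frequency integral at $|\xi|\approx|\alpha|^{-1}$, and integrates by parts once more only on the outer piece, at the cost of a logarithm that is immaterial for the $L^1$-integrability of $K$ and for the geometric-series summation above. Everything else is bookkeeping; the one structural point that is genuinely used is $\lambda\ge 1$, which forces the rescaled kernel $K$ to live essentially at the unit scale, so that $M_\lambda(D)$ is ``almost local'' with respect to the unit spatial windows in the definition of $L^2_tL^p_{loc}$.
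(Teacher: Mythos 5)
The paper itself does not prove this lemma (it is recalled from \cite{AIT} without argument), so there is no in-paper proof to compare routes with. Your overall architecture is the natural one and the bookkeeping is sound: the near/far splitting relative to the moving unit windows, Young's inequality for the near piece, dyadic annuli plus kernel decay for the far piece, the use of the uniform bi-Lipschitz property (via \eqref{uniform}, \eqref{W-ppoint}) to cover enlarged or translated windows by $O(1)$ admissible windows uniformly in $t$, and Minkowski's inequality to commute the sums with the $L^2_t$ norm are all fine.

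The genuine gap is in the kernel estimate, which is the only analytic content of the lemma. You assert that the rescaled kernel $K$ (the inverse Fourier transform of $n(\xi)=m_\lambda(\lambda\xi)$) satisfies $\|K\|_{L^\infty}\lesssim 1$ and $|K(\alpha)|\lesssim |\alpha|^{-2}$ (up to a log) for $|\alpha|\gtrsim 1$, \emph{uniformly in $\lambda$}, hence $K\in L^r$ uniformly and Young's inequality produces exactly $\lambda^{1/p-1/q}$. The $L^\infty$ bound is correct, but the tail bound does not follow from \eqref{m-symbol} by the integration by parts you describe: \eqref{m-symbol} gives no gain of $\lambda^{-1}$ per derivative (the $k=0$ case only says $|m_\lambda'(\xi)|\lesssim (1+\lambda^{-1}|\xi|)^{-4}$), so after rescaling one has $|n'(\xi)|\lesssim \lambda(1+|\xi|)^{-4}$ and $|n''(\xi)|\lesssim \lambda |\xi|^{-1}(1+|\xi|)^{-4}$, and your argument only yields $|K(\alpha)|\lesssim \lambda|\alpha|^{-2}$, i.e. $|K_\lambda(\alpha)|\lesssim\min(\lambda,|\alpha|^{-2})$ in the original variables, whose $L^1$ norm is of size $\lambda^{1/2}$ rather than $O(1)$. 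With only these bounds the near-part Young step loses a power of $\lambda$ (at $p=q$ it gives $\lambda^{1/2}$ instead of a constant); and this is not merely a lossy computation, since the class \eqref{m-symbol} as literally written does not force the kernel to concentrate at scale $\lambda^{-1}$ (it admits mildly modulated blocks at frequencies $\mu\ll\lambda$ whose kernel mass sits at distances much larger than $\lambda^{-1}$). The estimate you need, $|K_\lambda(\alpha)|\lesssim \lambda(1+\lambda|\alpha|)^{-2}$, is true for the multipliers actually listed after \eqref{m-symbol} (Littlewood--Paley projections, and $p_N(\beta,D)$, $p_D(\beta,D)$ with $|\beta|\approx\lambda^{-1}$), and it also follows if the symbol hypothesis is read so that each derivative gains a factor $\lambda^{-1}$ (so that the rescaled symbol lies in a fixed, $\lambda$-independent class). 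With that kernel bound in hand the rest of your proof closes; as written, however, the key decay estimate is asserted rather than proved, and it is not a consequence of the stated hypothesis.
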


\bigskip

\textbf{B. Low frequencies.}
Here we consider two dyadic low frequencies $1/h \leq \lambda_1, \lambda_2 \leq 1$,
and seek to understand how multipliers $M_{\lambda_2}(D)$
associated to frequency $\lambda_2$ 
act on the spaces $L^2_t L^p_{loc}(B_{\lambda_1})$. 
For such multipliers we have:

\begin{lemma}\label{l:bern-loc}
Let $1/h \leq \lambda_1,\lambda_2 \leq 1$ and $1 \leq p \leq q \leq \infty$.

a) Assume that $\lambda_1 \leq \lambda_2$. Then
\begin{equation}
\| M_{\lambda_2}(D) \|_{L^2_t L^p_{loc}(B_{\lambda_1}) \to 
L^2_t L^q_{loc}(B_{\lambda_1})} \lesssim \lambda_2^{\frac1p-\frac1q}.
\end{equation}

b) Assume that $ \lambda_2 \leq \lambda_1$. Then
\begin{equation}
\| M_{\lambda_2}(D) \|_{L^2_t L^p_{loc}(B_{\lambda_1}) \to 
L^2_t L^q_{loc}(B_{\lambda_2})} \lesssim \lambda_1^{\frac1p} \lambda_2^{-\frac1q}.
\end{equation}
\end{lemma}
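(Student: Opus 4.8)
The plan is to reduce Lemma~\ref{l:bern-loc} to a single fixed-time spatial estimate for the convolution operator $M_{\lambda_2}(D)$, and then recover both cases by a covering argument. The time variable plays no role beyond Minkowski's inequality, while the fact that the localization windows $B_{\lambda_1}(x_0)$ track the conformal parametrization and hence drift in $t$ is reconciled with the frozen-geometry estimate exactly as in \cite{AIT}, using the uniform conformal drift bounds (Proposition~\ref{p:switch-strips} and its corollary); I would suppress this bookkeeping. Thus, setting $v := M_{\lambda_2}(D)u$ (with $D$ acting in $\alpha$), it suffices to prove the following \emph{weighted fixed-time bound}: for every radius $r$ with $\lambda_2 r \geq 1$ and every $x_0\in\R$,
\begin{equation*}
\| v \|_{L^q(B_r(x_0))} \lesssim \sum_{k\in\Z} w_k \, \| u \|_{L^p(B_r(x_0+kr))}, \qquad \sum_{k\in\Z} w_k \lesssim \lambda_2^{\frac1p-\frac1q},
\end{equation*}
where $B_r(x_0)$ is the interval of radius $r$ about $x_0$ and $\{w_k\}$ decays like $|k|^{-2}$. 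Given this, applying $\|\cdot\|_{L^2_t}$ and then taking the supremum over $x_0$ is immediate, since each $B_r(x_0+kr)$ is again an admissible localization window.

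To prove the weighted bound I would first record the kernel estimate $|K_{\lambda_2}(\alpha)| \lesssim \lambda_2(1+\lambda_2|\alpha|)^{-2}$ for $M_{\lambda_2}(D) = K_{\lambda_2}*\cdot$, which follows from \eqref{m-symbol} and is the kernel bound underlying the multiplier estimates of \cite{AIT}; in particular $\|K_{\lambda_2}\|_{L^s} \lesssim \lambda_2^{1-\frac1s}$ for all $1\leq s\leq\infty$. Then I would split $u = \sum_k u_k$ with $u_k = u\,\mathbf{1}_{B_r(x_0+kr)}$, a bounded-overlap partition. For the diagonal block (say $|k|\leq 1$) one applies Young's inequality with $\frac1s = 1-(\frac1p-\frac1q) \in [0,1]$,
\begin{equation*}
\| K_{\lambda_2}*u_0 \|_{L^q(\R)} \leq \| K_{\lambda_2} \|_{L^s} \| u_0 \|_{L^p} \lesssim \lambda_2^{\frac1p-\frac1q}\, \| u \|_{L^p(B_r(x_0))},
\end{equation*}
which produces the Bernstein gain and the main weight $w_0 \approx \lambda_2^{\frac1p-\frac1q}$. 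For an off-diagonal block $|k|\geq 2$ one has $|\alpha-\alpha'|\gtrsim |k| r \geq |k|\lambda_2^{-1}$ when $\alpha\in B_r(x_0)$ and $\alpha'\in B_r(x_0+kr)$, hence $|K_{\lambda_2}(\alpha-\alpha')|\lesssim \lambda_2^{-1}|k|^{-2}r^{-2}$; combining this with Hölder's inequality and $|B_r|\approx r$ gives
\begin{equation*}
\| K_{\lambda_2}*u_k \|_{L^q(B_r(x_0))} \lesssim \lambda_2^{-1}|k|^{-2}\, r^{-1-(\frac1p-\frac1q)}\, \| u \|_{L^p(B_r(x_0+kr))},
\end{equation*}
so $w_k \approx \lambda_2^{-1}|k|^{-2}r^{-1-(\frac1p-\frac1q)}$ and $\sum_{|k|\geq2} w_k \lesssim \lambda_2^{-1}r^{-1-(\frac1p-\frac1q)} = (\lambda_2 r)^{-1-(\frac1p-\frac1q)}\,\lambda_2^{\frac1p-\frac1q}\leq \lambda_2^{\frac1p-\frac1q}$, using precisely that $\lambda_2 r\geq 1$. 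This establishes the weighted fixed-time bound.

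Finally I would read off the two cases. In case a), $\lambda_1\leq\lambda_2$, so $r := \lambda_1^{-1} \geq \lambda_2^{-1}$ is admissible and the windows $B_{\lambda_1}(x_0+kr)$ appearing on the right are exactly the windows defining $\|u\|_{L^2_t L^p_{loc}(B_{\lambda_1})}$; the weighted bound then yields directly the factor $\lambda_2^{\frac1p-\frac1q}$. In case b), $\lambda_2\leq\lambda_1$, so I would take $r := \lambda_2^{-1} \geq \lambda_1^{-1}$; the weighted bound gives $\| v\|_{L^2_t L^q_{loc}(B_{\lambda_2})} \lesssim \lambda_2^{\frac1p-\frac1q}\, \|u\|_{L^2_t L^p_{loc}(B_{\lambda_2})}$, and covering each radius-$\lambda_2^{-1}$ window by $O(\lambda_1/\lambda_2)$ windows of radius $\lambda_1^{-1}$ gives $\|u\|_{L^2_t L^p_{loc}(B_{\lambda_2})} \lesssim (\lambda_1/\lambda_2)^{\frac1p}\, \|u\|_{L^2_t L^p_{loc}(B_{\lambda_1})}$; multiplying, $\lambda_2^{\frac1p-\frac1q}(\lambda_1/\lambda_2)^{\frac1p} = \lambda_1^{\frac1p}\lambda_2^{-\frac1q}$, as claimed.

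The main obstacles are at the two ends of the argument rather than in the middle. First, the honest reduction to a frozen-geometry, translation-invariant estimate — reconciling the $t$-dependent localization windows of the holomorphic $LE$ norms with the fixed-$t$ bound — requires the uniform conformal drift control, and is the one point I would lean on \cite{AIT} for. Second, the precise kernel decay genuinely needs care with the symbol class \eqref{m-symbol} near $\xi=0$: any decay strictly better than $(1+\lambda_2|\alpha|)^{-1}$ suffices for the off-diagonal summation (and for $K_{\lambda_2}\in L^1$ in the endpoint $p=q$), and the concrete multipliers of interest ($P_\lambda$, $P_{\leq\lambda}$, and $p_D(\beta,D)$, $p_N(\beta,D)$ with $|\beta|\approx\lambda^{-1}$) have much better, essentially Schwartz-type, kernels. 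Once these two points are secured, the weighted near/far estimate is routine, the only structural feature being that the constraint $\lambda_2 r\geq 1$ is exactly what makes the off-diagonal tails lose against the diagonal.
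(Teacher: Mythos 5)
The paper itself does not prove this lemma — it is recorded as a recollection from \cite{AIT} — so I am comparing your argument against the standard proof that the paper's remark sketches (kernel bounds plus a near/far decomposition of windows, with the condition ``window size $\geq$ kernel width'' making the tails summable). Your overall strategy is exactly that, and it works: the fixed-time weighted estimate with $\lambda_2 r\geq 1$, Young on the diagonal block and kernel decay off-diagonal, followed by Minkowski in $t$, gives part (a) in full and part (b) for $1\leq p\leq 2$. The two points you defer are genuinely minor: the kernel bound from \eqref{m-symbol} requires a split at $|\xi|\sim|\alpha|^{-1}$ because the higher $\xi$-derivatives are only of Mikhlin type near $\xi=0$ (any decay beyond first order suffices, as you note), and the time-dependence of the window centers is handled by partitioning along the images of an Eulerian grid, using that $x\mapsto\alpha_0(x,t)$ is uniformly bi-Lipschitz (Proposition~\ref{p:switch-strips} is about the drift in depth, which is not the relevant statement here, but this is bookkeeping).

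There is, however, a genuine gap in your case (b): the final covering inequality $\|u\|_{L^2_t L^p_{loc}(B_{\lambda_2})}\lesssim (\lambda_1/\lambda_2)^{1/p}\|u\|_{L^2_t L^p_{loc}(B_{\lambda_1})}$ is false for $p>2$, because the $L^2_t$ norm sits \emph{outside} the spatial localization and the sup over windows. Concretely, with $N=\lambda_1/\lambda_2$, partition $[0,T]$ into $N$ intervals $E_j$ and a $\lambda_2^{-1}$-window into $N$ subwindows $I_j$ of length $\lambda_1^{-1}$, and take $u(t,\alpha)=\mathbf{1}_{E_j}(t)$ for $\alpha\in I_j$: then each small-window norm is $(T/N)^{1/2}\lambda_1^{-1/p}$ while the big-window norm is $T^{1/2}\lambda_1^{-1/p}$, so the covering constant is at least $N^{1/2}$, which exceeds $N^{1/p}$ for $p>2$ (at $p=q=\infty$ your route would lose a factor $(\lambda_1/\lambda_2)^{1/2}$ relative to the stated bound, which has constant $O(1)$ there). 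The lemma itself is true for all $1\leq p\leq q\leq\infty$; the repair is to avoid the intermediate $B_{\lambda_2}$-localized norm of $u$ altogether and run your weighted near/far estimate directly with input windows of radius $\lambda_1^{-1}$ and output window of radius $\lambda_2^{-1}$: bounding $|M_{\lambda_2}u(\alpha)|\leq\sum_j \|K_{\lambda_2}(\alpha-\cdot)\|_{L^{p'}(I_j)}\|u\|_{L^p(I_j)}$, the weights are summable with total $\lesssim \lambda_1^{1/p}\lambda_2^{-1/q}$ (the $\sim\lambda_1/\lambda_2$ near windows dominate, the far ones are controlled by the kernel decay), and Minkowski in $L^2_t$ with these $\ell^1$ weights, followed by the sup over windows, gives part (b) for every $p\leq q$. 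With that modification your proof is complete and is, in substance, the argument from \cite{AIT} that the paper invokes.
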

We remark that part (a) is nothing but the classical Bernstein's
inequality in disguise, as the multiplier $M_{\lambda_2}
$ does not mix
$\lambda_1^{-1}$ intervals. Part (b) is the more interesting one,
where the $\lambda_1^{-1}$ intervals are mixed.

\subsection{Bounds for \texorpdfstring{$\eta = \Im W$}{} and for 
\texorpdfstring{$\eta_x = J^{-\frac12} \Im W_\alpha$.}{}}

Here we have the straightforward equivalence
\begin{equation}\label{W-eta-le}
\| \eta \|_{LE^0} \approx \| \Im W\|_{LE^0}, 
\qquad \| \eta_x \|_{LE^0} \approx \| \Im W_\alpha\|_{LE^0}, 
\end{equation}
as $\eta$ and $\Im W$ are one and the same function up to a bi-Lipschitz 
change of coordinates. We begin with 
a bound from \cite{AIT} for the low frequencies of $\Im W$ on the top:

\begin{lemma}\label{l:W-LE-top}
For each dyadic frequency $1/h \leq \lambda < 1$ we have
\begin{equation}\label{W-LE-top}
\|\Im W_{\leq \lambda} \|_{L^2_t L^\infty_{loc}(B_\lambda)} \lesssim \| \Im W\|_{LE^0}. 
\end{equation}
\end{lemma}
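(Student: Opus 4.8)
The plan is to deduce the estimate from the abstract Bernstein-type bound in Lemma~\ref{l:bern-loc}, applied to the multiplier $P_{\leq\lambda}$, after first matching up the two norms in the statement with the standard local norms introduced earlier.

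I would start with a preliminary reduction. The truncation $P_{\leq\lambda}$ acts only in the spatial variable $\alpha$, and its symbol is real and even, hence it commutes with complex conjugation and in particular with taking imaginary parts; thus $\Im W_{\leq\lambda} = P_{\leq\lambda}\Im W$. Next, I identify the $LE^0$ norm on the top with the unit-scale local norm: by Fubini, for every $x_0$ one has
\[
\| \Im W \|_{L^2(\HS_h(x_0))} = \| \Im W \|_{L^2_t L^2_\alpha([\alpha_0-1,\alpha_0+1])},
\]
where $\alpha_0 = \alpha_0(t,x_0)$ is the (time-dependent) holomorphic coordinate of $x_0$. Taking the supremum over $x_0$, and recalling that the unit-scale local norms coincide with the $B_\lambda$ norms at $\lambda = 1$, this gives
\[
\| \Im W \|_{LE^0} = \| \Im W \|_{L^2_t L^2_{loc}} = \| \Im W \|_{L^2_t L^2_{loc}(B_1)}.
\]

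It then remains to invoke Lemma~\ref{l:bern-loc}. The operator $M_\lambda(D) := P_{\leq\lambda}$ is one of the admissible multipliers listed right after \eqref{m-symbol}, so part (b) of the lemma applies with $\lambda_1 = 1$ and $\lambda_2 = \lambda$; this is legitimate since $h \geq 1$ forces $1/h \leq \lambda < 1 = \lambda_1$. With $p = 2$ and $q = \infty$ the bound reads
\[
\| P_{\leq\lambda} \|_{L^2_t L^2_{loc}(B_1) \to L^2_t L^\infty_{loc}(B_\lambda)} \lesssim \lambda_1^{\frac12}\lambda_2^{0} = 1,
\]
which, combined with the two identities above and the commutation $\Im W_{\leq\lambda} = P_{\leq\lambda}\Im W$, is exactly \eqref{W-LE-top}.

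There is no real obstacle here; the only points needing attention are bookkeeping: that the time-dependent window centers $\alpha_0(t,x_0)$ enter the definitions of both $\|\cdot\|_{LE^0}$ and $\|\cdot\|_{L^2_t L^p_{loc}}$ in the same way, so the two match with no extra argument, and that $P_{\leq\lambda}$ genuinely satisfies the hypotheses \eqref{m-symbol}. Should one prefer a self-contained argument, one could instead write $P_{\leq\lambda}\Im W(t,\alpha) = \int K_\lambda(\alpha-\alpha')\Im W(t,\alpha')\,d\alpha'$ with $K_\lambda(s) = \lambda K_1(\lambda s)$ rapidly decaying, decompose the $\alpha'$-integral into unit intervals, estimate each piece by Cauchy--Schwarz, sum the resulting rapidly decaying $L^2$-kernel weights, and finally take the $L^2_t$ norm and the supremum over $x_0$; but the appeal to Lemma~\ref{l:bern-loc} makes all of this unnecessary.
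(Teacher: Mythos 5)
Your argument is correct: with $\Im W_{\leq \lambda} = P_{\leq \lambda}\Im W$ (the symbol being real and even), Lemma~\ref{l:bern-loc}(b) applied with $\lambda_1 = 1$, $\lambda_2 = \lambda$, $p=2$, $q=\infty$ gives the operator bound $\lambda_1^{1/2}\lambda_2^{0} \lesssim 1$, and the identification $\|\Im W\|_{L^2_t L^2_{loc}(B_1)} \approx \|\Im W\|_{LE^0}$ (an equivalence rather than an equality, via a finite covering of the slightly larger windows $B_1(x_0)$ by unit windows centered at nearby $x_1$, uniformly in $t$ thanks to the bi-Lipschitz conformal map) closes the estimate. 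The paper gives no proof of Lemma~\ref{l:W-LE-top}, quoting it from \cite{AIT}; your derivation from the multiplier bounds of Lemma~\ref{l:bern-loc} --- themselves quoted from \cite{AIT} --- is precisely the mechanism used there, so this is essentially the intended argument made explicit.
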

Here one may also replace $\Im W$ by $W_\alpha$,
\begin{equation}\label{Wa-LE-top}
\|W_{\alpha,\leq \lambda} \|_{L^2_t L^\infty_{loc}(B_\lambda)} \lesssim \|W_\alpha\|_{LE^0}. 
\end{equation}
Since 
\[
\|W_\alpha\|_{LE^0} \lesssim \| \Im W_\alpha\|_{LE^0} + h^{-1} \| \Im W\|_{LE^0},
\]
we can also estimate the same expression in terms of $\Im W$,
\begin{equation}\label{Wa-LE-top+}
\|W_{\alpha,\leq \lambda} \|_{L^2_t L^\infty_{loc}(B_\lambda)} \lesssim \lambda \|\Im W\|_{LE^0}. 
\end{equation}

On the other hand, for nonlinear estimates, we also need bounds in depth,
precisely over the regions $A_{\lambda}(x_0)$.  There, by \cite{AIT}, we have

\begin{lemma}\label{l:theta-LE-loc}
For each dyadic frequency $\lambda < 1$ we have
\begin{equation}\label{theta-LE-loc}
\|\Im W \|_{L^2_t L^\infty_{loc}(A_\lambda)} 
+\lambda^{-1}\|W_{\alpha} \|_{L^2_t L^\infty_{loc}(A_\lambda)} \lesssim \| \Im W\|_{LE^0}. 
\end{equation}
\end{lemma}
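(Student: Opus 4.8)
The plan is to work with the harmonic extension $\Im W(\alpha,\beta)$ of $\Im W$ on the top, and to exploit that at depth $|\beta|\approx \lambda^{-1}$ the Poisson-type kernels $p_N(\beta,D)$, $p_D(\beta,D)$ (equivalently $P_N(\beta,D)$, $P_D(\beta,D)$ from Section~\ref{ss:laplace}) act like smooth multipliers localized at frequency $\lesssim\lambda$, and thus fall under the class \eqref{m-symbol}. Since $\theta=\Im W$ and $\phi_x+i\phi_y = R$ are harmonic conjugates, we have $W_\alpha = Z_\alpha - 1$, and $\Im W$ in the strip is exactly the harmonic extension with the appropriate (Dirichlet) boundary behavior on the bottom; so the strip values of $\Im W$ at depth $\beta$ are obtained from the top trace by applying $P_D(\beta,D)$. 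First I would decompose $\Im W = \Im W_{<1} + \sum_{1\le\mu} \Im W_\mu$ on the top using the Littlewood--Paley pieces adapted to frequency $1/h$, and observe that for the region $A_\lambda(x_0)$, which sits at depth $|\beta|\approx\lambda^{-1}$ with $\lambda<1$, the high-frequency pieces $\mu\gtrsim\lambda$ are exponentially suppressed by the Poisson kernel, so only the pieces with frequency $\lesssim\lambda$ matter. Hence $\Im W|_{A_\lambda}$ is, up to exponentially small errors, $P_{\le\lambda}$-localized, and its values are controlled by the top values on a ball $B_\lambda(x_0)$ of the appropriate scale.

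The second step is then to pass from the strip norm to the top norm and invoke Lemma~\ref{l:W-LE-top}. Concretely, writing $\Im W(\alpha,\beta) = P_D(\beta,D)(\Im W)(\alpha)$ and splitting the top trace into its $\le\lambda$ part plus higher pieces, the $\le\lambda$ part contributes $\|P_{\le\lambda}\Im W\|_{L^2_t L^\infty_{loc}(A_\lambda)}$, which by Lemma~\ref{l:bern-loc} (part (a), since $P_D(\beta,D)$ with $|\beta|\approx\lambda^{-1}$ is a multiplier of frequency $\lesssim\lambda$ and does not mix $\lambda^{-1}$-intervals at that scale) is bounded by $\|P_{\le\lambda}\Im W\|_{L^2_t L^\infty_{loc}(B_\lambda)}$, and then Lemma~\ref{l:W-LE-top} gives the bound $\lesssim \|\Im W\|_{LE^0}$. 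The contribution of the higher dyadic pieces $\mu>\lambda$ is handled by summing their exponentially small weights $e^{-c\mu/\lambda}$ against $\|\Im W_\mu\|$-type quantities, which after Bernstein's inequality in the $L^2$-based local norms is harmlessly summable and again controlled by $\|\Im W\|_{LE^0}$.

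For the $W_\alpha$ term I would proceed the same way: $\partial_\alpha$ applied to the harmonic extension gains a factor $\lambda$ in the relevant frequency range (it is essentially $\lambda \cdot M_\lambda(D)$ acting on the top trace at depth $|\beta|\approx\lambda^{-1}$), so that $\|W_\alpha\|_{L^2_t L^\infty_{loc}(A_\lambda)} \lesssim \lambda \|W_{\alpha,\le\lambda}\|_{L^2_t L^\infty_{loc}(B_\lambda)} + (\text{exp.\ small})$; using \eqref{Wa-LE-top+} this is $\lesssim \lambda\|\Im W\|_{LE^0}$, which after multiplying by $\lambda^{-1}$ gives exactly the claimed bound. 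Here one must be a little careful that it is $\Im W_\alpha$ rather than $\Re W_\alpha$ that is directly controlled by the $LE^0$ norm, but at frequencies $>h^{-1}$ the real and imaginary parts of $W_\alpha$ have comparable size (as used already in the proof of Theorem~\ref{t:equiv}), while the low-frequency discrepancy is absorbed by the $h^{-1}\|\Im W\|_{LE^0}$ term, consistent with \eqref{Wa-LE-top+}.

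The main obstacle I anticipate is the interplay between the horizontal drift in depth and the localization: the strip region $A_\lambda(x_0)$ is centered at the holomorphic vertical section through $\alpha_0$, not through the Eulerian point, and the ball $B_\lambda(x_0)$ on the top on which we want to evaluate the trace must be wide enough to capture, via the (nonlocal but rapidly decaying) Poisson kernel, the values of $\Im W$ inside $A_\lambda(x_0)$. This is exactly the scale-matching that makes Lemma~\ref{l:bern-loc}(a) applicable — the $\lambda^{-1}$-width of $B_\lambda$ matches the $\lambda^{-1}$-width of $A_\lambda$ and the $\lambda^{-1}$-depth — and the tail of the kernel beyond that scale is the exponentially small error discussed above. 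Once this scale bookkeeping is set up correctly, the estimate reduces to Lemma~\ref{l:W-LE-top} plus Bernstein, with no genuinely new analytic input.
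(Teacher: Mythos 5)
Your proposal is correct and matches the intended argument: the paper itself imports this lemma from \cite{AIT} without proof, but the reduction you describe --- writing the strip values at depth $|\beta|\approx\lambda^{-1}$ as frequency-$\lambda$ multipliers (the class \eqref{m-symbol} explicitly contains $p_D(\beta,D)$, $p_N(\beta,D)$ at that depth) applied to the top trace, then invoking Lemma~\ref{l:bern-loc} together with Lemma~\ref{l:W-LE-top} and \eqref{Wa-LE-top+} --- is precisely the route suggested by the surrounding machinery and mirrors the paper's own proof of the companion Lemma~\ref{l:theta-LE-hi}. Two small clean-ups: in the $W_\alpha$ step the extra factor $\lambda$ in your intermediate display double counts the derivative gain already built into \eqref{Wa-LE-top+} (the multiplier taking the top trace of $W_\alpha$ to its value at depth $\lambda^{-1}$ is of order zero), and the supremum over $\beta$ inside the $L^2_t$ norm should be handled by the uniform-in-$\beta$ kernel envelope of $p_D(\beta,D)$ over the dyadic depth range (or a $\partial_\beta$ argument), while the drift of Proposition~\ref{p:switch-strips} is not actually in play here since $A_\lambda(x_0)$ and $B_\lambda(x_0)$ share the same holomorphic center $\alpha_0(t,x_0)$.
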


We will also need a mild high frequency bound:

\begin{lemma}\label{l:theta-LE-hi}
The following rstimate holds:
\begin{equation}\label{theta-LE-hi}
\|\Im W \|_{L^2_t L^2_{loc}(A_1)} + \|\beta W_{\alpha} \|_{L^2_t L^\infty_{loc}(A_1)} 
\lesssim \| \Im W\|_{LE^0}. 
\end{equation}
\end{lemma}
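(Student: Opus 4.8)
The plan is to exploit the fact that $\Im W$ and $W_\alpha$ are harmonic extensions (with the appropriate mixed boundary conditions) of their traces on the top $\{\beta=0\}$, and that the region $A_1(x_0)$ has unit depth, so the harmonic extension does not move mass very far. First I would observe that the bound for $\|\Im W\|_{L^2_t L^2_{loc}(A_1)}$ decomposes dyadically: write $\Im W = \sum_\lambda \Im W_\lambda$ where $W_\lambda = P_\lambda W$ and split the sum at the threshold $\lambda = 1$. For the high-frequency part $\lambda \geq 1$, the harmonic extension symbol $p_N(\beta, D)$ (or $p_D(\beta,D)$) is exponentially decaying at depth $|\beta| \gtrsim \lambda^{-1}$, so on $A_1$ — where $|\beta| \lesssim 1 \lesssim \lambda^{-1}$ is \emph{not} true, rather $|\beta|$ can be comparable to or larger than $\lambda^{-1}$ — the contribution is in fact rapidly decaying; more carefully, one uses that over a unit-depth slab the $L^2_{\alpha,\beta}$ norm of the harmonic extension of $W_\lambda$ is bounded by $\min(1,\lambda^{-\frac12})\|W_\lambda(\cdot,0)\|_{L^2_\alpha(\text{loc})}$, and summing the $\ell^2$-type bound against the frequency-envelope control from Proposition~\ref{p:control-equiv} (or just directly against $\|\Im W\|_{LE^0}$, since at frequency $\geq 1$ the $LE^0$ norm on the top already dominates) closes this piece. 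For the low-frequency part $\lambda < 1$, the region $A_1(x_0)$ is contained in finitely many translates of $A_\lambda$-type regions (it has depth $1 \leq \lambda^{-1}$ and width $1 \leq \lambda^{-1}$), so Lemma~\ref{l:theta-LE-loc} applies directly and gives $\|\Im W_{\leq \lambda}\|_{L^2_t L^\infty_{loc}(A_1)} \lesssim \|\Im W\|_{LE^0}$; passing from $L^\infty$ to $L^2$ over the unit box is free.

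Next I would treat the term $\|\beta W_\alpha\|_{L^2_t L^\infty_{loc}(A_1)}$. The weight $\beta$ is the key: it compensates for the derivative. Again decompose $W_\alpha = \sum_\lambda \partial_\alpha W_\lambda$. For high frequencies $\lambda \geq 1$, on $A_1$ we have $|\beta| \lesssim 1$, and the harmonic extension of $\partial_\alpha W_\lambda$ at depth $\beta$ has size $\lambda\, e^{-c\lambda|\beta|}$ times its top trace (roughly), so $|\beta|\lambda e^{-c\lambda|\beta|} \lesssim 1$ uniformly; combined with a Bernstein-type passage from $L^2$ on top to $L^\infty$ in the strip (Lemma~\ref{l:bern-loc-hi}, applied to the multiplier $\beta\,\partial_\alpha p_N(\beta,D)$, whose symbol satisfies the decay hypotheses \eqref{m-symbol} with the $\beta$ weight absorbed), this gives summability in $\lambda$ against $\|\Im W\|_{LE^0}$. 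For low frequencies $\lambda < 1$, I would use $|\beta| \lesssim 1$ together with the fact that $A_1 \subset A_\lambda$-type regions and the second term in Lemma~\ref{l:theta-LE-loc}, which controls $\lambda^{-1}\|W_\alpha\|_{L^2_t L^\infty_{loc}(A_\lambda)} \lesssim \|\Im W\|_{LE^0}$; since $|\beta|\lesssim 1 \ll \lambda^{-1}$ we even gain, so this piece is harmless. Finally I would note that throughout one should be careful that the norms are of $W_\alpha$ (not $\Im W_\alpha$); the passage is handled by the now-standard inequality $\|W_\alpha\|_{LE^0} \lesssim \|\Im W_\alpha\|_{LE^0} + h^{-1}\|\Im W\|_{LE^0} \lesssim \|\Im W\|_{LE^0}$ used already in \eqref{Wa-LE-top+}, since real and imaginary parts of $W_\alpha$ share the same regularity at frequencies $> h^{-1}$.

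The main obstacle I expect is the \emph{high-frequency} bound in depth for $\beta W_\alpha$ — one must verify that the $\beta$ weight genuinely converts one derivative into a bounded multiplier on the \emph{local} $L^2_t L^\infty$ spaces, not just on global $L^2$. Concretely, the symbol $\beta\, i\xi\, p_N(\xi,\beta)$ does satisfy the hypotheses \eqref{m-symbol} of Lemma~\ref{l:bern-loc-hi} after rescaling $\xi \mapsto \lambda^{-1}\xi$ (with $|\beta| \approx \lambda^{-1}$ being the worst case, and smaller $|\beta|$ only improving), but some care is needed to organize the dyadic sum over $\lambda$ with the frequency-envelope weights so that the exponential factor $e^{-c\lambda|\beta|}$ in the symbol is not wasted. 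The cleanest route is probably to further subdivide $A_1$ into dyadic depth layers $|\beta| \approx 2^{-j}$, $j \geq 0$, apply Lemma~\ref{l:bern-loc-hi} on each layer with the multiplier adapted to that depth, and then sum in $j$ — the exponential decay in $\lambda|\beta|$ makes the double sum over $(\lambda, j)$ converge trivially. Everything else is routine once the decomposition is set up.
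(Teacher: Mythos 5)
The $L^2$ half of your argument is essentially the paper's proof in expanded form: the paper simply reduces to a fixed depth $\beta\in(0,1)$ and notes that $\Im W(\beta)$ and $\beta W_\alpha(\beta)$ are obtained from the top trace $\Im W$ by zero-order multipliers localized at frequencies $\lesssim |\beta|^{-1}$, after which Lemma~\ref{l:bern-loc-hi} with $p=q=2$ gives the bound with no dyadic bookkeeping at all. Two corrections to your version of this step. First, your low-frequency argument rests on a false containment: $A_\lambda(x_0)$ is by definition the region $|\beta|\approx \lambda^{-1}$, so for $\lambda<1$ it sits strictly \emph{below} $A_1(x_0)$, and Lemma~\ref{l:theta-LE-loc} does not apply ``directly'' to points with $|\beta|\lesssim 1$; the correct (and simpler) route is the same multiplier observation as for high frequencies, since at depth $|\beta|\le 1$ the extension of the low-frequency part is a uniformly bounded multiplier applied to the trace. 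Second, the frequency envelopes of Proposition~\ref{p:control-equiv} control $X$-type norms, not $LE^0$, so they have no role here.

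The genuine gap is exactly where you yourself locate the main obstacle, namely the $L^2_tL^\infty_{loc}(A_1)$ bound for $\beta W_\alpha$, and your resolution of it does not work. The multiplier $\beta\,\partial_\alpha\, p_N(\beta,D)$ is indeed zero order and localized at frequencies $\lesssim|\beta|^{-1}$, but passing from the local $L^2_\alpha$ information on the top (which is all that $\|\Im W\|_{LE^0}$ provides) to local $L^\infty_\alpha$ at depth $\beta$ via Lemma~\ref{l:bern-loc-hi} costs the Bernstein factor $\lambda^{1/2}\approx|\beta|^{-1/2}$; the exponential decay in $\lambda|\beta|$ only suppresses frequencies $\gg|\beta|^{-1}$ and gives nothing at $\lambda\approx|\beta|^{-1}$, where the symbol has size $1$. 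Hence your layer-by-layer estimate yields $2^{j/2}\,\|\Im W\|_{LE^0}$ on the layer $|\beta|\approx 2^{-j}$, and the sum over $j\ge 0$ diverges --- the double sum over $(\lambda,j)$ does \emph{not} ``converge trivially''. The loss is real, not an artifact of the bookkeeping: a wave packet at frequency $\mu\gg1$ concentrated on a $\mu^{-1}$-interval satisfies $\|\beta W_\alpha\|_{L^\infty}\approx \mu^{1/2}\|\Im W\|_{L^2_{loc}}$ at depth $|\beta|=\mu^{-1}$, so no argument using only local $L^2$ control of $\Im W$ on the top can produce the stated $L^\infty$ bound uniformly as $\beta\to 0$. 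For comparison, the paper's own one-line justification (``zero order multipliers, hence the estimate easily follows'') cleanly yields only the fixed-$\beta$ $L^2_{loc}$ bounds; note that later the lemma is in fact invoked only through the $L^2(A_1)$-type consequence \eqref{est-thetab1} for $\beta\theta_y$, which does follow from the $p=q=2$ multiplier estimate. If you prove the $L^2$-in-$(\alpha,\beta)$ version of the second term, you recover everything that is actually used; as written, your proposal does not close the $L^\infty$ term.
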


\begin{proof}
It suffices to prove a fixed $\beta$ bound,
\[
\|\Im W(\beta) \|_{L^2_t L^2_{loc}(B_1)} 
+ \|\beta W_{\alpha}(\beta) \|_{L^2_t L^\infty_{loc}(B_1)} \lesssim \| \Im W\|_{LE^0}, \qquad \beta \in (0,1). 
\]
But both $\Im W(\beta)$ and $\beta W_{\alpha}(\beta)$ are defined in terms 
of $\Im W$ via zero order multipliers localized at frequency $\leq \beta^{-1}$.
Hence the above estimate easily follows.
\end{proof}

\subsection{ Estimates for \texorpdfstring{$F(W_\alpha$)}{}}

Here we consider a function $F$ which is holomorphic in a 
neighbourhood of $0$, with $F(0) = 0$ and prove 
local energy bounds for the auxiliary  holomorphic function $F(W_\alpha)$.

\begin{lemma}[ Holomorphic Moser estimate] 
\label{l:Y}
Assume that $\|W_\alpha\|_{ L^\infty} \ll 1$. Then

a) For $\lambda > 1$ we have 
\begin{equation}\label{Yhi}
\| F(W_\alpha)_\lambda \|_{L^2_t L^2_{loc}} \lesssim \lambda \|W\|_{L_t^2 L^2_{loc}}.
\end{equation}

b) For $\lambda \leq 1$ we have 
\begin{equation}\label{Ylo}
\| F(W_\alpha)_\lambda \|_{L^2_t L_{\alpha}^\infty(B_\lambda(x_0))} \lesssim \lambda\|W\|_{L_t^2 L^2_{loc}}.
\end{equation}
\end{lemma}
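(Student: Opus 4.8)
The plan is to reduce to monomials via the Taylor series of $F$ and to estimate each monomial by a paraproduct argument in which exactly one derivative is transferred onto a frequency-localized copy of $W$ (this produces the gain of one power of $\lambda$), while every remaining factor is placed in $L^\infty$ and absorbed by the smallness of $\|W_\alpha\|_{L^\infty}$. Write $F(z)=\sum_{k\ge 1}a_kz^k$ with $|a_k|\lesssim r^{-k}$, $r>0$ the radius of convergence; since $\|W_\alpha\|_{L^\infty}\ll 1<r$, the series $F(W_\alpha)=\sum_k a_kW_\alpha^k$ and its frequency-$\lambda$ part converge absolutely. Thus it suffices to prove, with an \emph{absolute} constant $C_0$ and uniformly in $x_0$,
\[
\|(W_\alpha^k)_\lambda\|_{L^2_tL^2_{loc}}\lesssim C_0^k\,\|W_\alpha\|_{L^\infty}^{k-1}\,\lambda\,\|W\|_{L^2_tL^2_{loc}}\qquad(\lambda>1),
\]
\[
\|(W_\alpha^k)_\lambda\|_{L^2_tL^\infty_\alpha(B_\lambda(x_0))}\lesssim C_0^k\,\|W_\alpha\|_{L^\infty}^{k-1}\,\lambda\,\|W\|_{L^2_tL^2_{loc}}\qquad(\lambda\le 1),
\]
and then to sum in $k$ (the $C_0^k$ is harmless once $\|W_\alpha\|_{L^\infty}<r/C_0$).

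I would prove the monomial bounds by induction on $k$. For the base case $k=1$, write $(W_\alpha)_\lambda=\partial_\alpha W_\lambda$; since $W_\lambda$ is at frequency $\sim\lambda$, Bernstein's inequality supplies the factor $\lambda$, and $\|W_\lambda\|$ in the relevant local norm is controlled by $\|W\|_{L^2_tL^2_{loc}}$ using Lemma~\ref{l:bern-loc-hi} for $\lambda>1$ and Lemma~\ref{l:bern-loc}(b) (with $\lambda_1=1$, $\lambda_2=\lambda$) for $\lambda\le1$. For $k\ge2$ write $W_\alpha^k=W_\alpha\cdot W_\alpha^{k-1}$ and use the Bony paraproduct $W_\alpha^k=T_{W_\alpha^{k-1}}W_\alpha+T_{W_\alpha}W_\alpha^{k-1}+\Pi(W_\alpha,W_\alpha^{k-1})$. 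In $T_{W_\alpha^{k-1}}W_\alpha$ the output at frequency $\lambda$ pairs $(W_\alpha^{k-1})_{\ll\lambda}$, estimated in $L^\infty$ by $\|W_\alpha\|_{L^\infty}^{k-1}$, with $(W_\alpha)_{\sim\lambda}$, estimated as in the base case. In $T_{W_\alpha}W_\alpha^{k-1}$ we pair $(W_\alpha)_{\ll\lambda}$, estimated in $L^\infty$ by $\|W_\alpha\|_{L^\infty}$, with $(W_\alpha^{k-1})_{\sim\lambda}$, to which we apply the inductive hypothesis of degree $k-1$. The diagonal part of $\Pi$ (both inputs at frequency $\sim\lambda$) is handled by the same $L^2_{loc}\times L^\infty$ split. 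Throughout, the passage between $L^2$ and $L^\infty$ local norms, and between the unit and the $\lambda^{-1}$ scale in case (b), uses Lemmas~\ref{l:bern-loc-hi} and \ref{l:bern-loc}, while the output localization $P_\lambda$ is harmless since all these products live at frequency $\sim\lambda$.

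The main obstacle is the genuinely high--high part of $\Pi(W_\alpha,W_\alpha^{k-1})$, namely $\sum_{\mu>2\lambda}P_\lambda\big((W_\alpha)_\mu(W_\alpha^{k-1})_{\sim\mu}\big)$, which is not controlled by $\lambda\|W\|_{L^2_tL^2_{loc}}$ on its own (the derivative in $W_\alpha$ sits at frequency $\mu\gg\lambda$, not at the low output frequency). Here holomorphicity is essential: $W$, and hence $W_\alpha$ and every product $W_\alpha^{k-1}$, are holomorphic, so in the infinite-depth case their Fourier transforms are supported on one side of the origin; the product of two of them localized at frequency $\sim\mu$ then has Fourier support at frequencies $\gtrsim\mu>\lambda$ and is annihilated by $P_\lambda$, so this contribution vanishes identically. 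In finite depth one-sidedness holds only up to a correction whose relative size at frequency $\mu$ is $\lesssim e^{-2h\mu}$ --- the exponential localization of holomorphic functions on the strip, visible in the symbols $p_N,p_D$ of Section~\ref{ss:laplace} and used systematically in \cite{H-GIT}; this is where the hypothesis $h\ge1$ enters. Bounding each correction by $e^{-2h\mu}\,\mu\,\|W\|_{L^2_tL^2_{loc}}\|W_\alpha\|_{L^\infty}^{k-1}$ and summing over $\mu>2\lambda$ yields a total $\lesssim\lambda\|W\|_{L^2_tL^2_{loc}}\|W_\alpha\|_{L^\infty}^{k-1}$, which is more than sufficient. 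This closes the induction with constants growing at most geometrically in $k$; summing the series then gives \eqref{Yhi} and \eqref{Ylo}. Case (b) follows from exactly the same scheme, with all local norms read on the balls $B_\lambda(x_0)$ of radius $\lambda^{-1}$ and Lemma~\ref{l:bern-loc} used in place of Lemma~\ref{l:bern-loc-hi}.
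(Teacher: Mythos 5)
Your proposal is correct in substance, and it isolates exactly the mechanism the paper emphasizes (holomorphy of $W_\alpha$ together with analyticity of $F$ with $F(0)=0$), but it reaches the bound by a genuinely different route. The paper gives no self-contained argument here: it defers to \cite{AIT}, where the estimate is proved for $Y=W_\alpha/(1+W_\alpha)$, and the argument indicated there and by the surrounding machinery is to work with the holomorphic extension directly: since $\|W_\alpha\|_{L^\infty}\ll 1$ propagates into the strip by the maximum principle, $F(W_\alpha)$ is again holomorphic in $S$, real on the bottom, and pointwise dominated by $|W_\alpha|$; for such functions the trace at depth $|\beta|\approx\lambda^{-1}$ differs from the frequency-$\lambda$ part on top only by multipliers of the class \eqref{m-symbol} (for a holomorphic function real on the bottom, evaluation at depth $\beta$ acts on the top trace as the single multiplier $e^{-\beta\xi}$), so \eqref{Yhi} and \eqref{Ylo} reduce to estimating $W_\alpha$ at depth $\lambda^{-1}$, i.e.\ to the $F(z)=z$ computation already contained in Lemma~\ref{l:theta-LE-loc} together with Lemmas~\ref{l:bern-loc-hi} and \ref{l:bern-loc}. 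You instead Taylor-expand $F$, run an induction on monomials through Bony's decomposition, and use one-sided Fourier support of holomorphic functions --- exact in infinite depth, with $e^{-2h\mu}$ tails in finite depth coming from the factor $1-\tanh(h\xi)$ --- to remove the high-high interactions that would otherwise defeat a Moser-type bound at this negative regularity; your summation of the exponential tails over $\mu>2\lambda$ is fine since all frequencies involved satisfy $h\mu\gtrsim 1$. This alternative is valid and has the virtue of making completely explicit why ordinary Moser estimates fail and how holomorphy cures it; the cost is bookkeeping: geometric-in-$k$ constants, and the fact that the operators you invoke (sharp-side dyadic cutoffs composed with the exponentially small symbol, plus a complex conjugation, acting on the moving local norms) are not literally covered by Lemmas~\ref{l:bern-loc-hi}--\ref{l:bern-loc}, so their boundedness with the $e^{-2h\mu}$ gain requires a routine kernel-bound verification that you assert rather than carry out. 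The extension-at-depth argument packages all of this into a single multiplier identity, treats cases (a) and (b) simultaneously, and makes the uniformity in $h$ manifest.
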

In both cases, one should think of the implicit 
constant as depending on $\|W_\alpha\|_{L^\infty}$.
We note that both estimates follow directly from 
Lemma~\ref{l:theta-LE-loc} if $F(z) = z$. However
to switch to an arbitrary $F$ one would seem 
to need some Moser type inequalities, 
which unfortunately do not work in negative Sobolev spaces.
The key observation is that in both of these 
estimates it is critical that $W_\alpha$ is holomorphic, and $F(W_\alpha)$ is an analytic function 
of $W_\alpha$. This lemma was proved in \cite{AIT} for the expression
\[
Y = \frac{W_\alpha}{1+W_\alpha},
\]
but the proof is identical in the more general case considered here.


\section{ The error estimates}

The aim of this section is to use holomorphic coordinates in order to  prove
Lemmas~\ref{l:fixed-t},~\ref{l:kappa-diff},~\ref{l:kappa-3}, which for convenience we recall below.

\begin{lemma}\label{l:fixed-t-re}
The following fixed estimate holds:
\begin{equation} \label{fixed-t-re}
\left | \iint_{\Omega(t)} m_x(x-x_0) \theta q_{x}\, dy dx \right | \lesssim   
\| \eta\|_{g^{-\frac{1}{4}}H^{\frac14}_h \cap \kappa^{-\frac{1}{4}}H^{\frac34}_h} \| \psi_x\|_{ g^{\frac{1}{4}}H^{-\frac14}_h+ h^{\frac{1}{4}}H^{-\frac34}_h}.
\end{equation}
\end{lemma}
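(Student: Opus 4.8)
The plan is to transport the integral to holomorphic coordinates, where it becomes a bilinear (plus higher order) form amenable to a Littlewood--Paley/Schur argument. Recall that $\theta = \Im W$, that $q_x = -\phi_y$ corresponds to $-\Im R$, and that the fluid domain $\Omega(t)$ becomes the strip $\Strip$ with area element $J\, d\alpha\, d\beta$, where $J = |1+W_\alpha|^2 = 1 + O(\epsilon)$ by \eqref{W-ppoint}. Writing $\alpha_0 = \alpha_0(t,x_0)$ for the holomorphic image of $x_0$ and $x = \Re Z(\alpha,\beta)$ for the Eulerian coordinate, this gives
\[
\iint_{\Omega(t)} m_x(x-x_0)\, \theta\, q_x\, dy\, dx = -\iint_\Strip m_x(\Re Z(\alpha,\beta) - x_0)\, \Im W(\alpha,\beta)\, \Im R(\alpha,\beta)\, J\, d\alpha\, d\beta .
\]
By Proposition~\ref{p:switch-strips} and its corollary, the transported weight $m_x(\Re Z - x_0)$ is, at depth $|\beta|\approx\lambda^{-1}$, a smooth bump of $\alpha$-width $O(1)$ located within $O(\epsilon_0\langle\beta\rangle)$ of $\alpha_0$, hence well inside the natural spatial scale of the frequency-$\lambda$ piece; combined with $J = 1 + O(\epsilon)$, this reduces matters, modulo manifestly lower order trilinear terms (carrying an extra factor of $W_\alpha$ and hence $O(\epsilon)$ times the right-hand side of \eqref{fixed-t-re}), to bounding $\bigl|\iint_\Strip \chi(\alpha)\, \Im W\, \Im R\, d\alpha\, d\beta\bigr|$ for a fixed bump $\chi$ supported in $|\alpha-\alpha_0|\lesssim 1$.

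Next I would use that $\Im W$ and $\Im R$ vanish on the bottom $\{\beta=-h\}$, so inside the strip they are the Dirichlet harmonic extensions $\Im W(\cdot,\beta) = P_D(\beta,D)\eta$ and $\Im R(\cdot,\beta) = P_D(\beta,D) g_\ast$ of their traces $\eta = \Im W|_{\beta=0}$, $g_\ast = \Im R|_{\beta=0}$. Decompose $\eta = \sum_\lambda \eta_\lambda$ and $g_\ast = \sum_\mu (g_\ast)_\mu$ into Littlewood--Paley pieces starting at frequency $h^{-1}$. Since $P_D(\beta,D)$ applied to a frequency-$\lambda$ function decays exponentially below depth $\lambda^{-1}$, the product $\Im W_\lambda\, \Im R_\mu$ is concentrated at depth $|\beta|\lesssim\min(\lambda^{-1},\mu^{-1})$, so for $\lambda,\mu<1$ the $\alpha$-integral contributes $O(1)$ from $\chi$ and the $\beta$-integral contributes $\min(\lambda^{-1},\mu^{-1})$, giving $\bigl|\iint_\Strip \chi\,\Im W_\lambda\,\Im R_\mu\bigr| \lesssim \min(\lambda^{-1},\mu^{-1})\,\|\eta_\lambda\|_{L^\infty}\,\|(g_\ast)_\mu\|_{L^\infty}$. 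Bernstein then yields $\|\eta_\lambda\|_{L^\infty}\lesssim\lambda^{1/4}\|\eta_\lambda\|_{\dot H^{1/4}}$ and $\|(g_\ast)_\mu\|_{L^\infty}\lesssim\mu^{3/4}\|(g_\ast)_\mu\|_{\dot H^{-1/4}}$, and the dyadic kernel $\min(\lambda^{-1},\mu^{-1})\lambda^{1/4}\mu^{3/4}$ passes a Schur test, summing to $\|\eta\|_{\dot H^{1/4}}\|g_\ast\|_{\dot H^{-1/4}}$; the high-frequency pairs $\lambda,\mu\geq1$ and the mixed pairs are handled the same way, with the $L^2_{loc}$-type Bernstein bounds of Lemma~\ref{l:bern-loc-hi} in place of the global ones. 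Finally, to replace $g_\ast = \Im R|_{\beta=0}$ by $\psi_x$ I would use that on the top $R = \phi_x + i\phi_y$ with $\Im R = -\Tilh\Re R$ and $\psi_x = \Re R + \eta_x\Im R$; since $\Tilh = -i\tanh(hD)$ is bounded this gives $\|g_\ast\|_{H^s_h}\lesssim\|\psi_x\|_{H^s_h} + (\text{quadratic})$, the quadratic remainder being absorbed into the trilinear errors.

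Two refinements remain. Above the threshold $\lambda_0 = \sqrt{g/\kappa}$ the same estimate is run with $\eta_\lambda$ measured in $\dot H^{3/4}$ and $(g_\ast)_\mu$ in $\dot H^{-3/4}$, as dictated by \eqref{e14}; the weights $g^{\pm1/4}$ (below $\lambda_0$) and $\kappa^{\pm1/4}$ (above $\lambda_0$) cancel between the two factors, so only the homogeneous Sobolev balance enters, and the Schur summation still closes across the threshold because the high-frequency exponents are likewise balanced. Separately, at frequencies $\lesssim h^{-1}$ the factor $|\tanh(h\xi)|\approx h|\xi|$ hidden in $\Tilh$ (and in the symbol of $\int_{-h}^0 P_D(\beta,D)^2\,d\beta$, which behaves like $(|D|+h^{-1})^{-1}$) supplies exactly the gain needed to accommodate the $h^sL^2$ component of the $H^s_h$ norms, keeping the whole estimate uniform in $h$. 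I expect the main obstacle to be precisely this bookkeeping: organizing the dyadic sums so that the bound is simultaneously uniform in the depth $h$ — the low-frequency/large-depth regime, where the Dirichlet extension persists through the entire strip and the $\beta$-integral threatens a spurious factor of $h$ — and in $\kappa/g$, with the admissible Sobolev exponents changing character at $\lambda_0$, all while carrying the conformal drift correctly in depth. Once this is set up, the conceptual content is simply that the $\beta$-integration acts as a smoothing of order $-1$, turning the apparent $\dot H^{1/4}\times\dot H^{-1/4}$ (resp. $\dot H^{3/4}\times\dot H^{-3/4}$) mismatch into a balanced pairing.
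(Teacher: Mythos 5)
Your low-frequency treatment (depth-dyadic decomposition, harmonic extensions at depth $|\beta|\approx\lambda^{-1}$ acting as frequency cutoffs, Bernstein to $L^\infty$, Schur summation) is essentially the paper's own argument for the regions $A_\lambda(x_0)$, $h^{-1}<\lambda<1$. The divergence — and the gaps — are at high frequency and in the bookkeeping that converts holomorphic to Eulerian data. First, your repeated assertion that the corrections carrying a factor of $W_\alpha$, $J-1$ or $\eta_x$ (from the Jacobian, from the transported weight, and from passing between $\Im R$ and $\psi_x$ via $\psi_x=\Re R+\eta_x\Im R$) are ``manifestly lower order, $O(\epsilon)$ times the right-hand side'' is not justified by $\|W_\alpha\|_{L^\infty}\lesssim\epsilon$ alone. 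In the pairing relevant above the threshold $\lambda_0=\sqrt{g/\kappa}$, namely $\kappa^{-\frac14}H^{\frac34}_h\times\kappa^{\frac14}H^{-\frac34}_h$, multiplication by a merely bounded function is \emph{not} bounded, so a quadratic term is not automatically $\epsilon$ times the claimed bilinear bound. This is precisely where the paper invokes the product estimates \eqref{algebra}, \eqref{algebra1}--\eqref{algebra1-dual} (paraproduct decomposition, with the $T_\eta m$ term controlled through $\|m_{xx}\|_{L^1}$) together with the $X_1$ component of the a priori bound \eqref{uniform}, via Theorem~\ref{t:equiv}, in the form $(\kappa/g)^{\frac14}\|J_\alpha\|_{L^2}\lesssim\epsilon$; the same machinery underlies Lemma~\ref{l:equiv-M}, which you would also need (negative-order norms do not transfer under a bi-Lipschitz change of variables by interpolation alone). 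Your proposal never uses the a priori bound beyond $J=1+O(\epsilon)$ and the drift estimate, so these reductions are genuinely open in your write-up.

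Second, the high-frequency Schur test does not close ``the same way'' as the low-frequency one. If you keep the structure depth factor $\times$ $L^\infty\times L^\infty$ with Bernstein applied to both factors, then for $\lambda_0\le\lambda<\mu$ the dyadic kernel is $\mu^{-1}\cdot\lambda^{\frac12}\lambda^{-\frac34}\cdot\mu^{\frac12}\mu^{\frac34}=(\mu/\lambda)^{\frac14}$, which grows in $\mu$ and has divergent row sums: the regime where $\psi_x$ sits at much higher frequency than $\eta$ is lost. To close one must treat the factors asymmetrically — Cauchy--Schwarz on the unit window for the high-frequency factor (no Bernstein there), Bernstein only on the low-frequency factor in the mixed regimes — or else exploit the smoothness of the weight to gain off-diagonal decay, which is exactly what the paper's alternative route does: at depth $\lesssim 1$ it estimates at each fixed $\beta$ by duality, $|\int Jm_x\,\Im W\,\Im R\,d\alpha|\lesssim\|Jm_x\Im W\|_{g^{-\frac14}H^{\frac14}_h\cap\kappa^{-\frac14}H^{\frac34}_h}\|\Im R\|_{g^{\frac14}H^{-\frac14}_h+\kappa^{\frac14}H^{-\frac34}_h}$, and then applies the weighted multiplication lemma. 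So the skeleton of your argument is right for $|\beta|\gtrsim 1$, but the high-frequency half needs either the corrected asymmetric Schur bookkeeping or the paper's duality-plus-product-estimate argument, and in either case the quadratic/trilinear ``error'' terms require the $X^\kappa$ (in particular $X_1$) input rather than being dismissed as lower order.
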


\begin{lemma}\label{l:kappa-diff-re}
The integral $I^\kappa_2$ is estimated by
\begin{equation}
| I^\kappa_{2}| \lesssim h^{-2} \| \eta \|_{LE^0}^2 .
\end{equation}
\end{lemma}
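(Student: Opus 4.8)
The plan is to rewrite the integral defining $I^\kappa_2$ in holomorphic coordinates and then exploit the fact that $\theta_{xx}=-\theta_{yy}$ so that the difference $H_N(\theta_{xx})-\theta_{xx}$ measures precisely the discrepancy between the Neumann and Dirichlet harmonic extensions. First I would recall that $\theta=\Im W$, that $\nabla\phi$ corresponds to $R$, and that the Eulerian strip integral over $\Omega(t)$ transfers, modulo the bi-Lipschitz change of coordinates and Proposition~\ref{p:switch-strips} for the horizontal drift in depth, to an integral over the holomorphic strip $S$. The key structural observation is that both $\theta_{xx}$ and its Neumann extension $H_N(\theta_{xx})$ agree on the top $\{y=\eta\}$ (up to the distortion of the boundary, which contributes only higher order terms), so their difference vanishes there and is the harmonic extension of a function supported by the bottom mismatch of boundary conditions. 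Concretely, $(H_N-H_D)$ applied to any function, when both are normalized by the same Dirichlet data on top, produces a function which in the strip is of size $O(e^{-h/|\beta|})$ relative to its top data, i.e.\ it is exponentially small except near the bottom, and on the bottom it is controlled by the low-frequency ($\lesssim h^{-1}$) part of the data.

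The main steps, in order: (1) transfer $I^\kappa_2$ to holomorphic coordinates, writing $\theta_y$ in terms of $\partial_\beta\Im W$ (a zero-order multiplier applied to $\Im W$) and writing $m_x$ composed with the conformal map, using the Corollary after Proposition~\ref{p:switch-strips} to control the shift; (2) express $H_N(\theta_{xx})-\theta_{xx}$ via the Fourier multipliers $p_N(\xi,\beta)$ and $p_D(\xi,\beta)$ from Section~\ref{ss:laplace}: since $p_N(\xi,\beta)-\text{(extension of the second derivative)}$ and $p_D$ differ only in their hyperbolic structure, the difference multiplier is $O(1)$ only at frequencies $|\xi|\lesssim h^{-1}$ and exponentially decaying above; (3) consequently $H_N(\theta_{xx})-\theta_{xx}$ is essentially a frequency-$\lesssim h^{-1}$ object, and its $L^2$-in-depth norm over a unit-depth slab picks up a factor $h^{-2}$ from the two derivatives landing on low frequencies (each derivative at frequency $\lesssim h^{-1}$ costs at most $h^{-1}$); (4) pair this against $m_x\,\theta_y$ using Cauchy--Schwarz on the holomorphic strips $\HVS_h(x_0)$, and invoke Lemma~\ref{l:theta-LE-loc} and Lemma~\ref{l:W-LE-top} to bound the relevant low-frequency pieces of $\Im W$ in the local energy norm $\|\Im W\|_{LE^0}\approx \|\eta\|_{LE^0}$.

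I expect the main obstacle to be step (2)--(3): making rigorous the claim that $H_N(\theta_{xx})-\theta_{xx}$ genuinely behaves like a frequency-$\lesssim h^{-1}$ function with two derivatives to spare, rather than merely an $O(1)$ bounded operator applied to $\theta_{xx}$. The subtlety is that $\theta=\Im W$ satisfies a Dirichlet condition on the \emph{Eulerian} bottom, which after the conformal map is the flat bottom of $S$, so $\theta_{xx}$ in holomorphic coordinates is $\partial_\alpha^2\Im W$ up to Jacobian factors, and one must carefully separate the genuinely low-frequency bottom mismatch from the quasilinear Jacobian corrections — the latter being handled by the holomorphic Moser estimate Lemma~\ref{l:Y} and the smallness \eqref{W-ppoint}. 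Once the difference operator is seen to be bounded by $h^{-2}$ times a projection to frequencies $\lesssim h^{-1}$ (in an appropriate local sense, using Lemma~\ref{l:bern-loc}), the remaining pairing is a routine Cauchy--Schwarz together with the local energy bounds already recorded in Section~\ref{s:switch}.
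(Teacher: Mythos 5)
Your outline follows the paper's proof in its main structural elements: passing to holomorphic coordinates, recognizing that the Dirichlet/Neumann mismatch makes $H_N(\theta_{xx})-\theta_{xx}$ essentially the very low frequency part $P_{<1/h}\theta_{xx}$, handling the quasilinear nature of $\theta_{xx}$ through the identity \eqref{second}, $\theta_{xx}=-\tfrac12\partial_\alpha \Im (1+W_\alpha)^{-2}$, together with the holomorphic Moser estimate of Lemma~\ref{l:Y} applied to $F(W_\alpha)=(1+W_\alpha)^{-2}-1$, and finally pairing against $\theta_y$ using the local energy bounds of Section~\ref{s:switch}. These are exactly the ingredients used in the paper.

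There is, however, one quantitative step in your plan that does not close as written, namely the bookkeeping in steps (3)--(4). The only control of $\theta_y$ by $\|\eta\|_{LE^0}$ comes from Lemmas~\ref{l:theta-LE-loc} and~\ref{l:theta-LE-hi}, which bound $\beta\theta_y$ rather than $\theta_y$: at depth $|\beta|\approx\lambda^{-1}$ the available bound for $W_\alpha$ carries a factor $\lambda$. Consequently, a direct Cauchy--Schwarz of $\theta_y$ against a depth-uniform bound $h^{-2}\|\Im W\|_{LE^0}$ for the difference diverges as $\beta\to 0$ and loses a $\log h$ over the dyadic depths $h^{-1}\le\lambda\le 1$; replacing the local energy bound by the uniform bound $|W_\alpha|\lesssim\epsilon$ near the top is no remedy, since it breaks the $L^2_t$ pairing and hence the uniformity in $T$. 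The vanishing of $H_N(\theta_{xx})-\theta_{xx}$ on the top, which you state only as a structural observation, must enter the estimate quantitatively. The paper does this by trading a factor of $\beta$ between the two factors: one proves $\|\beta\theta_y\|_{L^2_tL^\infty(A_\lambda(x_0))}\lesssim\|\Im W\|_{LE^0}$ (and the analogous bound \eqref{est-thetab1} on $A_1$), together with the bound \eqref{estDN}, $\|\beta^{-1}(H_N(\theta_{xx})-\theta_{xx})\|_{L^2_tL^\infty(\bfB_{1/h}(x_0))}\lesssim h^{-3}\|\Im W\|_{LE^0}$, where the three powers of $h^{-1}$ come from the two derivatives acting at frequencies $\lesssim h^{-1}$ plus one from the linear vanishing at $\beta=0$ on the $h$-scale; the depth summation $\sum_{\lambda}\lambda^{-1}\approx h$ then restores one power of $h$ and gives exactly $h^{-2}\|\eta\|_{LE^0}^2$ with no logarithmic loss. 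With this single modification your plan coincides with the paper's argument.
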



\begin{lemma}\label{l:kappa-3-re}
The expression $I_{3}^{\kappa}$ is estimated by
\begin{equation}
 | I^{\kappa}_{3} | \lesssim \epsilon \Big(  \| \eta_x \|_{LE^{0}}^2  +  \frac{g}{\kappa}  \| \eta \|_{LE^{\kappa}}^2 \Big).
\end{equation}
\end{lemma}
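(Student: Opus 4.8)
The plan is to pass to holomorphic coordinates and analyze the quantity
\[
I_3^\kappa = -\int_0^T \iint_{\Omega(t)} m_x\, \theta_y\, H_N\bigl(H(\eta) - \theta_{xx}\bigr)\, dy\, dx\, dt
\]
by expanding the symbol $H(\eta) - \theta_{xx}$ into its quadratic part in $W$ plus higher order terms. The first step is bookkeeping: using $\theta = \Im W$, $\eta = \Im W$ on the top, and $J = |1+W_\alpha|^2 = 1 + O(\epsilon)$, I would write $H(\eta) = \partial_x\bigl(\eta_x / \sqrt{1+\eta_x^2}\bigr)$ in holomorphic coordinates and subtract off $\theta_{xx}$. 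The key algebraic point is that $\eta_x - \Im W_\alpha$ and $\eta_{xx} - \Im W_{\alpha\alpha}$ are both at least quadratic in $W$ (via the Jacobian factors $J^{-1/2}$), so $H(\eta) - \theta_{xx}$ has a leading quadratic term in $\Im W$ with a fixed bilinear symbol, plus a remainder that is cubic and higher. Since $H_N$ is a harmonic-extension operator with Neumann condition on the bottom, which by Section~\ref{ss:laplace} corresponds to $P_N(\beta,D)$, I would estimate its contribution to the bulk integral $\iint m_x \theta_y \,(\cdot)\, dy\,dx$ using the frequency-envelope transfer results (Proposition~\ref{p:control-equiv} and the remark after it) together with the Littlewood–Paley/Bernstein machinery of Section~\ref{s:switch}, in particular Lemmas~\ref{l:bern-loc-hi}, \ref{l:bern-loc}, \ref{l:theta-LE-loc}, \ref{l:theta-LE-hi} and the holomorphic Moser estimate Lemma~\ref{l:Y}.

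For the quadratic piece, the second step is the heart of the matter: I would localize $\theta_y = \Im W_\beta$ in frequency, localize the two inputs of the bilinear symbol for $H(\eta) - \theta_{xx}$ in frequency, and split into the usual high$\times$high$\to$low, high$\times$low$\to$high, and low$\times$low regimes. The integration against $m_x$ (supported in a unit window) lets me use the local-energy norms $\| \eta_x\|_{LE^0}$ and $\| \eta\|_{LE^\kappa}$; the factor $g/\kappa$ on the right side is the natural weight because $\kappa^{1/2}\| \eta_x\|_{LE^0}$ and $g^{1/2}\|\eta\|_{LE^0}$ both sit inside $\|\eta\|_{LE^\kappa}$, and the threshold frequency $\lambda_0 = \sqrt{g/\kappa}$ is exactly where one trades one for the other. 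In each regime the two derivatives lost to $H$ (it is a second-order operator) must be absorbed: one onto $\theta_y$ near the top (controlled via $\| \eta_x\|_{LE^0}$ using Lemma~\ref{l:theta-LE-hi}) and the remaining budget distributed using Bernstein's inequality and the depth-localized bounds in $A_\lambda(x_0)$, picking up $\epsilon$ from the small factor $\|W_\alpha\|_{L^\infty} \lesssim \epsilon_0$ and the smallness of $\|(\Im W, R)\|_{X^\kappa}$ from Theorem~\ref{t:equiv}. I would handle the drift between Eulerian and holomorphic vertical strips with Proposition~\ref{p:switch-strips} and its corollary, which is harmless because it grows only linearly in $|\beta|$.

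The third step is the cubic-and-higher remainder. Here I would use that each extra factor of $W_\alpha$ or $Y$ (which is holomorphic) can be estimated in $L^\infty$ by $\epsilon_0$ via \eqref{W-ppoint}, or paired off using Lemma~\ref{l:Y}, so these terms are strictly perturbative and fall under the same $\epsilon(\| \eta_x\|_{LE^0}^2 + \tfrac{g}{\kappa}\|\eta\|_{LE^\kappa}^2)$ bound with room to spare; the only subtlety is that one must still be careful about where derivatives land, but since no new cancellation is needed the worst case is the same as in the quadratic analysis with an extra small multiplier.

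The main obstacle I anticipate is the high$\times$high$\to$low interaction in the quadratic term: as the text itself flags, $H(\eta)$ is an exact $x$-derivative but its Neumann harmonic extension is not, so in the high$\times$high$\to$low piece there is genuinely no derivative to factor out, and one must instead exploit the imaginary-part/holomorphic structure (the difference of the two conjugate capillary terms in \eqref{FullSystem-re} is what carries the cancellation) together with the depth localization — the output being at low frequency $\mu$ means it is supported at depth $|\beta| \gtrsim \mu^{-1}$, which supplies the missing smallness through the decay of $P_N(\beta,D)$ on high-frequency inputs. Making this quantitative, uniformly in $\kappa \to 0$ and $h \to \infty$, and checking that the resulting powers of $\lambda_0$ close up against the $g/\kappa$ weight on the right, is the delicate computation that the final section must carry out.
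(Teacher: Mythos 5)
Your overall framework (pass to holomorphic coordinates, note that the linear part of $H(\eta)-\theta_{xx}$ cancels so the expression is quadratic and higher in $W_\alpha$, decompose dyadically, use the strip-drift bound and the transfer results) matches the paper's setup, but at the one point you yourself identify as the heart of the matter your proposal substitutes a mechanism that is not available. For the quadratic high$\times$high$\to$low interactions you propose to ``exploit the imaginary-part/holomorphic structure (the difference of the two conjugate capillary terms in \eqref{FullSystem-re}\dots)'' plus the decay of $P_N(\beta,D)$ on high-frequency inputs. No such cancellation exists here: after the algebra (see \eqref{second} and the computation in the last section) the worst terms have the form $\Im[\partial_\alpha F_1(W_\alpha)\, G_1(\bar W_\alpha)]$, which mix a holomorphic and an antiholomorphic factor, so all dyadic interactions survive and no derivative can be factored out -- exactly the obstruction the paper flags when introducing the decomposition \eqref{H-decomp}. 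The capillary terms of \eqref{FullSystem-re} play no role in $I_3^\kappa$, and the depth decay of the harmonic extension alone does not produce a bound that is uniform as $\kappa\to 0$; so the key quantitative estimate is left unproven in your argument.

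What actually closes this case in the paper is an ingredient you never invoke: the $X_1$ component of the control norm, i.e.\ the a-priori bound $\|W_{\alpha\alpha}\|_{L^\infty_t L^2}\lesssim \epsilon (g/\kappa)^{1/4}$ obtained in Theorem~\ref{t:equiv} (see \eqref{want-X1}). The reduction is to the trilinear quantity $\int\!\!\iint |W_\alpha|\,|H_N(W_{\alpha\alpha}\bar W_\alpha)|$, the analytic prefactors being replaced via the Moser-type Lemma~\ref{l:Y}; then one decomposes in depth into the three ranges $\lambda\ge\lambda_0$, $1\le\lambda<\lambda_0$, $\lambda<1$, places $W_{\alpha\alpha}$ in the control norm, the remaining two factors in local energy ($\|W_\alpha\|_{LE^0}$ or $\|\Im W\|_{LE^0}$ depending on the range), and uses Bernstein at depth $|\beta|\approx\lambda^{-1}$. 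The factor $g/\kappa$ on the right arises from the $(g/\kappa)^{1/4}$ weights and the dyadic summation against the threshold $\lambda_0=\sqrt{g/\kappa}$, not from trading $\kappa^{1/2}\|\eta_x\|_{LE^0}$ against $g^{1/2}\|\eta\|_{LE^0}$ inside $LE^\kappa$ as you suggest. Without bringing in the $X_1$ bound (or some equivalent strengthened high-frequency control) your scheme cannot absorb the two derivatives in $H$ in the high$\times$high$\to$low regime, so the proof as proposed does not go through.
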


We begin by expressing the quantities in the Lemma using holomorphic coordinates.
We first recall that 
\[
\theta = \Im W, \qquad q_x = \Im R,
\]
and by the chain rule we have 
\[
 \theta_x = \Im \left(\frac{W_\alpha}{1+W_\alpha}\right), \qquad \theta_y = 
\Re \left(\frac{W_\alpha}{1+W_\alpha}\right).
\]
A second use of chain rule yields 
\begin{equation}
    \label{second}
 \theta_{xx} = \Im \left[ \frac{1}{1+W_\alpha} 
 \partial_\alpha  \left(\frac{W_\alpha}{1+W_\alpha}\right) \right] 
 = -\frac12 \partial_\alpha \Im \frac{1}{(1+W_\alpha)^2}.
\end{equation}
Finally,  $H(\eta)$ is expressed as 
\[
H (\eta) = - \frac{i}{1+ W_\alpha} \partial_\alpha (J^{-\frac12}(1+ W_\alpha)).
\]

We recall that the local energy norms are easily transferred,
see Proposition~\ref{p:equiv}
\[
\| \eta \|_{LE^{\kappa}} \approx \| \Im W \|_{LE^{\kappa}}, 
\qquad \| \eta_x \|_{LE^{\kappa}} \approx \| \Im W_\alpha \|_{LE^{\kappa}}.
\]
while, by Theorem~\ref{t:equiv}, for the uniform bound we have 
\begin{equation}\label{l:transfer-hf}
\| \Im W\|_{X^{\kappa}}  \lesssim \| \eta \|_{X^{\kappa}}.
\end{equation}
The last item we need to take into account in switching coordinates is that 
the image of the vertical strip in Euclidean coordinates is still 
a strip $S_{hol}(t)$  in the holomorphic setting with $O(1)$ horizontal size, 
but centered around  $\alpha_0(t,\beta)$, where 
\[
|\alpha_0(t,\beta) - \alpha_0(t,0) | \lesssim \epsilon |\beta|.
\] 
This is from Proposition~\ref{p:switch-strips}.

\subsection{ Proof of Lemma~\ref{l:fixed-t-re}}

We begin by rewriting our integral in holomorphic coordinates,
\[
I_{t,x_0} = \iint_S J m_x(x-x_0) \Im W \Im R \, d\alpha d\beta.
\]
In view of the norm equivalence in Lemma~\ref{l:equiv-M}, 
for this integral we need to prove the bound
\begin{equation} \label{fixed-t-hol}
| I_{t,x_0} | \lesssim   \| \Im W \|_{g^{-\frac{1}{4}}H^{\frac14}_h \cap \kappa^{-\frac{1}{4}}H^{\frac34}_h} \| \Im R \|_{ g^{\frac{1}{4}}H^{-\frac14}_h+ \kappa^{\frac{1}{4}}H^{-\frac34}_h} 
\end{equation}
where the norms on the right are taken on the top.
Here we recall that $m_x$ is a bounded, 
Lipschitz bump function with support 
in the strip $S_{hol}(t)$. This is all we will use concerning $m_x$.
The strip $S_{hol}(t)$ is contained in the dyadic union
\[
S_{hol}(t) \subset A_1(x_0) \bigcup \bigcup_{ h^{-1} < \lambda < 1} A_\lambda(x_0). 
\]
Correspondingly we split the integral as 
\[
I_{t,x_0} = I_1 + \sum_{ h^{-1} < \lambda < 1} I_\lambda.
\]
For $I_\lambda$ we directly estimate
\[
|I_\lambda| \lesssim  \lambda^{-1} \| \Im W\|_{L^\infty(A_\lambda(x_0))} 
\| \Im R\|_{L^\infty(A_\lambda(x_0))}.
\]
For the pointwise bounds we recall that 
$\Im W(\beta) = P_N(\beta,D) \Im W $, and similarly for $\Im R$,
where, for $\beta \approx \lambda^{-1}$, 
the multiplier $P_N(\beta,D)$ selects the frequencies 
$\leq \lambda$. Hence, harmlessly allowing 
rapidly decaying tails in our Littlewood-Paley truncations, 
we obtain using Bernstein's inequality
\[
\begin{split}
\sum_{\lambda < 1} |I_\lambda| \lesssim & \  \sum_{\lambda < 1} \lambda^{-1} 
\| \Im W_{\leq \lambda}\|_{L^\infty} \| \Im R_{\leq \lambda}\|_{L^\infty} 
\\
\lesssim & \ \sum_{\lambda < 1} 
\lambda^{-1} \sum_{\mu < \lambda} \mu^\frac12 \| \Im W_{\mu}\|_{L^2} 
\sum_{\nu < \lambda} \nu^\frac12 \| \Im R_{\nu}\|_{L^2} 
\\
\lesssim & \  \sum_{\mu,\nu < 1} 
\min\left\{ (\mu/\nu)^\frac12, (\nu/\mu)^\frac12  \right\}
\| \Im W_{\mu}\|_{L^2}  \nu^\frac12 \| \Im R_{\nu}\|_{L^2} 
\\
\lesssim & \   \| \Im W_{<1}\|_{ H_h^\frac14} 
\sum_{\nu < \lambda} \nu^\frac12 \| \Im R_{\leq 1}\|_{H_h^{-\frac14}} 
\\
\lesssim & 
\| \Im W \|_{g^{-\frac{1}{4}}H^{\frac14}_h 
\cap \kappa^{-\frac{1}{4}}H^{\frac34}_h} 
\| \Im R \|_{ g^{\frac{1}{4}}H^{-\frac14}_h+ \kappa^{\frac{1}{4}}H^{-\frac34}_h},
\end{split}
\]
where the last step accounts 
for the rapidly decaying tails in the frequency localizations.

It remains to consider $I_0$, for which 
it suffices to estimate at fixed $\beta \in [0,1]$ 
(the norms for $\Im W$ and $\Im R$ at depth $\beta$ are easily estimated by the similar norms on the top):
\[
\begin{split}
\left|\int_\R J m_x \Im W \Im R \, d\alpha \right| \lesssim
\|J m_x \Im W \|_{g^{-\frac{1}{4}}H^{\frac14}_h \cap \kappa^{-\frac{1}{4}}H^{\frac34}_h} \| \Im R \|_{ g^{\frac{1}{4}}H^{-\frac14}_h+ \kappa^{\frac{1}{4}}H^{-\frac34}_h},
\end{split}
\]
where for the first factor we further estimate 
as in the proof of \eqref{algebra},
\[
\begin{split}
\|J m_x \Im W \|_{g^{-\frac{1}{4}}H^{\frac14}_h \cap \kappa^{-\frac{1}{4}}H^{\frac34}_h} \lesssim & \  \| m_x\|_{W^{1,1}} \|J  \Im W \|_{g^{-\frac{1}{4}}H^{\frac14}_h \cap \kappa^{-\frac{1}{4}}H^{\frac34}_h}
\\ \lesssim & \ (\|J \|_{L^\infty} + (\kappa/g)^\frac14 \| J_\alpha\|_{L^2}) \|\Im W \|_{g^{-\frac{1}{4}}H^{\frac14}_h \cap \kappa^{-\frac{1}{4}}H^{\frac34}_h},
\end{split}
\]
and for the $L^2$ norm of $J_\alpha$ we use our a-priori $X^\kappa$ bound 
given by Theorem~\ref{t:equiv} to get
\[
(\kappa/g)^\frac14 \| J_\alpha\|_{L^2} \lesssim \epsilon.
\]

\subsection{ Proof of Lemma~\ref{l:kappa-diff-re}}
Taking into account the above properties, we bound the integral $I_2^{\kappa}$  by 
\[
|I^\kappa_2| \lesssim   \int_0^T  \iint_{S_{hol}(t)}   |\theta_y| |H_{N}(\theta_{xx}) - \theta_{xx}|\,  d\alpha d\beta
 dt .
\]
As in \cite{AIT}, we split the integration 
region vertically into dyadic pieces, which are contained 
in the regions $A_1(x_0)$, respectively $A_{\lambda}(x_0)$ 
with $h^{-1} < \lambda < 1$ dyadic, and all of which
are contained in $\bfB_{1/h}(x_0)$. We also take 
advantage of the fact that the second factor is smooth on the $h$ scale and
vanishes on the top in order to insert 
a $\beta$ factor. Then we estimate 
\[
\begin{split}
|I^\kappa_2| \lesssim & \   \int_0^T  \iint_{A_1(x_0)} 
 |\beta \theta_y| d\alpha d \beta \sup_{A_1(x_0)}  
 |\beta^{-1} (H_{N}(\theta_{xx}) - \theta_{xx})| +\\
  &  \hspace*{1cm} + \sum_{\lambda = 1}^{h^{-1}}  \lambda^{-1} \sup_{A_\lambda(x_0)} 
 |\beta \theta_y| \sup_{A_\lambda(x_0)}  
 |\beta^{-1} (H_{N}(\theta_{xx}) - \theta_{xx})|\, dt
 \\
 \lesssim &  \left(\! \|\beta \theta_y\|_{L^2_t L^\infty(A_1(x_0))} +\! \sum_{\lambda = 1}^{h^{-1}}  \lambda^{-1}
 \|\beta \theta_y\|_{L^2_t L^\infty(A_\lambda(x_0))}\! \right)\! \| \beta^{-1} (H_{N}(\theta_{xx}) - \theta_{xx}) \|_{L^2_t L^\infty( \bfB^{1/h}(x_0)) }.
\end{split}
\]

Then it suffices to prove the following bounds:
\begin{equation}\label{est-thetab1}
\| \beta \theta_y \|_{L^2_t L^2(A_1(x_0))} \lesssim \| \Im W\|_{LE^{0}},
\end{equation}
\begin{equation}\label{est-thetab}
\| \beta \theta_y \|_{L^2_t L^\infty(A_\lambda(x_0))} \lesssim \| \Im W\|_{LE^{0}},
\end{equation}
 respectively 
\begin{equation}\label{estDN}
\| \beta^{-1} (H_{N}(\theta_{xx}) - \theta_{xx}) \|_{L^2_t L^\infty( \bfB^{1/h}(x_0)) } \lesssim h^{-3} \| \Im W\|_{LE^{0}}.
\end{equation}
Given these three bounds, the conclusion of the Lemma easily follows. It remains to prove \eqref{est-thetab1},
\eqref{est-thetab}, respectively \eqref{estDN}.

\bigskip

{\bf Proof of \eqref{est-thetab1}, \eqref{est-thetab}:}
These bounds are direct consequences of Lemma~\ref{l:theta-LE-hi}, 
respectively Lemma~\ref{l:theta-LE-loc}. 

\bigskip

{\bf Proof of \eqref{estDN}:} Here we are subtracting the Dirichlet
and Neuman extension of a given function. This we already had to do in
\cite{AIT}, where the idea was that the only contributions come from
very low frequencies $\leq h^{-1}$, 
\[
H_N(\theta_{xx}) - \theta_{xx} \approx P_{<1/h} \theta_{xx}.
\]
If instead of $\theta_{xx}$ we had its principal part $\Im W_{\alpha \alpha}$ then the argument would
be identical to \cite{AIT}, gaining two extra $1/h$ factors from the derivatives.
The challenge here is to show that we can bound the very low frequencies of $\theta_{xx}$ 
in a similar fashion. But given the  expression \eqref{second} for $\theta_{xx}$, 
this is also a direct consequence of Lemma~\ref{l:Y}, applied to the function
\[
F(W_\alpha) = \frac{1}{(1+W_\alpha)^2} -1.
\]

\subsection{ Proof of Lemma~\ref{l:kappa-3-re}}
As before we bound  $I_3^{\kappa}$ as
\[
|I^\kappa_3| \lesssim    \int_0^T  \iint_{S_{hol}(t)} | \theta_y |  
|H_N(H(\eta)- \theta_{xx})|  \,  d\alpha d\beta  dt .
\]
We write on the top
\[
\begin{split}
H(\eta) - \theta_{xx} = &\  \Im  \left[   \frac{1}{1+ W_\alpha} \partial_\alpha (J^{-\frac12}(1+ W_\alpha))
 - \frac{1}{1+W_\alpha} \partial_\alpha  \left(\frac{W_\alpha}{1+W_\alpha}\right) \right]
\\
= &\  \Im  \left[  \frac{W_{\alpha \alpha}}{2} 
\left(\frac{1}{(1+ W_\alpha)^\frac32 (1+\bar W_\alpha)^\frac12}  - \frac{2}{(1+ W_\alpha)^3}\right) \right],
\end{split}
\]
where the linear part cancels and we are left with 
a sum of expressions of the form
\[
\Im [ \partial_\alpha F_1(W_\alpha) G_1(\bar W_\alpha) ], \qquad \Im [ \partial_\alpha F_2(W_\alpha)],
\]
where the subscript indicates the minimum degree 
of homogeneity. The first type of expression is the 
worst, as it allows all dyadic frequency interactions, 
whereas the second involves products of 
holomorphic functions which preclude high-high to low 
interactions. Since $F_1(W_\alpha)$  and  $ G_1(\bar W_\alpha) $ 
have the same regularity as $W_\alpha$ and 
$\bar W_\alpha$, to streamline the computation we simply replace 
them by that. Hence we 
end up having to bound trilinear expressions of the form
\begin{equation} \label{I3-main}
I =  \int_0^T  \iint_{S_{hol}(t)}|W_\alpha| |H_N( W_{\alpha \alpha} \bar W_\alpha)|\, d\alpha d\beta dt .
\end{equation}

To estimate this expression we use again a 
dyadic decomposition with respect to depth.
We consider dyadic $\beta$ regions 
$|\beta| \approx \lambda^{-1}$ associated to a frequency 
$\lambda \in (0,h^{-1}]$. But here we need to separate 
into three cases, depending on how $\lambda$ compares to $1$
and also to $\lambda_0 = \sqrt{g/\kappa} > 1$.

\bigskip
{\bf Case 1, $\lambda \geq \lambda_0$.}
The harmonic extension at depth $\lambda$ is a multiplier which 
selects frequencies $\leq \lambda$, with exponentially decaying tails.
Hence we need to estimate an integral of the form 
\begin{equation}\label{I3-high}
I_\lambda = \int_0^T \iint_S 1_{A_1} 1_{ |\beta| \approx \lambda^{-1}} 
|W_{\alpha, <\lambda}| |P_{< \lambda} ( W_{\alpha \alpha} \bar W_\alpha)|\, d\alpha d\beta dt.
\end{equation}
Then we use two local energies for the $W_\alpha$ factors and one 
apriori bound $W_{\alpha \alpha} \in \epsilon (g/\kappa)^\frac14 L^2$ 
arising from the $X_1$ norm plus Bernstein's inequality to bound this by
\[
\begin{split}
I_\lambda \lesssim & \ \lambda^{-1} \| W_\alpha\|_{LE^0} \| P_{< \lambda}
( W_{\alpha \alpha} \bar W_\alpha)\|_{LE^0}
\\
\lesssim & \ \lambda^{-1} \| W_\alpha\|_{LE^0} \lambda^{\frac12} 
\| W_{\alpha \alpha}\|_{L^2} \| W_\alpha\|_{LE^0}
\\
\lesssim & \ \epsilon \lambda^{-1} \lambda^\frac12 
(g/\kappa)^\frac14 \| W_\alpha\|_{LE^{0}}^2,
\end{split}
\]
where the dyadic $\lambda$ summation is trivial for $\lambda > \lambda_0$.

\bigskip

{\bf Case 2, $1 \leq \lambda < \lambda_0$.}
Here we still need to estimate an integral of the form \eqref{I3-high} but we balance norms differently.
Precisely, for the first $W_\alpha$ factor we use the local energy 
norm for $\Im W$ instead, and otherwise 
follow the same steps as before. 
Then we bound the integral in \eqref{I3-high} by
\[
I_\lambda \lesssim \epsilon \lambda^{-1} \lambda \lambda^\frac12 
(g/\kappa)^\frac14 
\|\Im W\|_{LE^{0}}  \| W_\alpha\|_{LE^0}.
\]
Again the dyadic $\lambda$ summation for $\lambda < \lambda_0$ 
is straightforward, and we conclude by the Cauchy-Schwarz inequality.

\bigskip
{\bf Case 3, $\lambda < 1$}. Here the corresponding 
part of the integral \eqref{I3-main} is localized in the intersection of the strip 
$S^\lambda_h(x_0)$ with $A_\lambda$, and we estimate it using $L^\infty$ norms in $A_{\lambda}$,
by 
\[
\begin{split}
I_\lambda = & \  \lambda^{-1} \int_0^T \sup_{A_\lambda(x_0)} |W_\alpha| 
\sup_{A_\lambda(x_0)} |H_N( W_{\alpha \alpha} \bar W_\alpha)|\, dt
\\
\lesssim & \ \lambda^{-1} \| W_{\alpha}\|_{L^2_t L^\infty(A_\lambda)} 
\| H_N ( W_{\alpha \alpha} \bar W_\alpha) \|_{L^2_t L^\infty(A_\lambda)}.
\end{split}
\]

For the first factor we use the local energy of $W$,
\[
\| W_{\alpha}\|_{L^2_t L^\infty(A_\lambda)} \lesssim \lambda \| \Im W\|_{LE^{0}}.
\]
For the bilinear factor we recall again that the harmonic extension 
at depth $\beta \approx \lambda^{-1}$ is a multiplier selecting frequencies 
$\leq \lambda$, see \eqref{PN}. Then we use the a-priori $L^2$ 
bound for $W_{\alpha \alpha}$ and the local energy of $W_\alpha$, 
and apply the Bernstein inequality in Lemma \eqref{l:bern-loc}:
\[
\begin{split}
\| P_{< \lambda} ( W_{\alpha \alpha} \bar W_\alpha) \|_{L^2_t L^\infty(B_\lambda)}
\lesssim  & \ \lambda \| W_{\alpha \alpha} \bar W_\alpha \|_{L^2_t L^1_{loc}(B_\lambda)}
\lesssim \lambda \| W_{\alpha \alpha}\|_{L^\infty_t L^2} \| \bar W_\alpha \|_{L^2_t L^2_{loc}(B_\lambda)}
\\
\lesssim & \ \lambda \epsilon (g/\kappa)^\frac14  \lambda^{-\frac12} \| W_\alpha\|_{LE^{0}}.
\end{split}
\]
Overall we obtain the same outcome as in Case 2.


\begin{thebibliography}{10}

\bibitem{Agrawal-LWP}
Siddhant Agrawal.
\newblock Angled crested type water waves with surface tension: Wellposedness
  of the problem.
\newblock arXiv:1909.09671, 2019.

\bibitem{Ai2}
Albert {Ai}.
\newblock {Low regularity solutions for gravity water waves II: The 2D case}.
\newblock {\em arXiv e-prints}, page arXiv:1811.10504, Nov 2018.

\bibitem{Ai1}
Albert {Ai}.
\newblock {Low Regularity Solutions for Gravity Water Waves}.
\newblock {\em Water Waves}, 1(1):145--215, May 2019.

\bibitem{ABZ1}
Thomas Alazard, Nicolas Burq, and Claude Zuily.
\newblock On the water-wave equations with surface tension.
\newblock {\em Duke Math. J.}, 158(3):413--499, 2011.

\bibitem{AD-global}
Thomas Alazard and Jean-Marc Delort.
\newblock Global solutions and asymptotic behavior for two dimensional gravity
  water waves.
\newblock {\em Ann. Sci. \'{E}c. Norm. Sup\'{e}r. (4)}, 48(5):1149--1238, 2015.

\bibitem{AIT}
Thomas {Alazard}, Mihaela {Ifrim}, and Daniel {Tataru}.
\newblock {A Morawetz inequality for water waves}.
\newblock {\em arXiv e-prints}, page arXiv:1806.08443, Jun 2018.

\bibitem{AmMa}
David~M. Ambrose and Nader Masmoudi.
\newblock The zero surface tension limit of two-dimensional water waves.
\newblock {\em Comm. Pure Appl. Math.}, 58(10):1287--1315, 2005.

\bibitem{BO}
T.~Brooke Benjamin and Peter~J. Olver.
\newblock Hamiltonian structure, symmetries and conservation laws for water
  waves.
\newblock {\em J. Fluid Mech.}, 125:137--185, 1982.

\bibitem{BG}
Klaus Beyer and Matthias G{\"u}nther.
\newblock On the {C}auchy problem for a capillary drop. {I}. {I}rrotational
  motion.
\newblock {\em Math. Methods Appl. Sci.}, 21(12):1149--1183, 1998.

\bibitem{CCFGS-surface-tension}
Angel Castro, Diego C\'{o}rdoba, Charles Fefferman, Francisco Gancedo, and
  Javier G\'{o}mez-Serrano.
\newblock Finite time singularities for water waves with surface tension.
\newblock {\em J. Math. Phys.}, 53(11):115622, 26, 2012.

\bibitem{CCFGGS-arxiv}
Angel Castro, Diego C{\'o}rdoba, Charles Fefferman, Francisco Gancedo, and
  Javier G{\'o}mez-Serrano.
\newblock Finite time singularities for the free boundary incompressible
  {E}uler equations.
\newblock {\em Ann. of Math. (2)}, 178(3):1061--1134, 2013.
\newblock arXiv:1112.2170.

\bibitem{CMSW}
Robin~Ming Chen, Jeremy~L. Marzuola, Daniel Spirn, and J.~Douglas Wright.
\newblock On the regularity of the flow map for the gravity-capillary
  equations.
\newblock {\em J. Funct. Anal.}, 264(3):752--782, 2013.

\bibitem{CHS}
Hans Christianson, Vera~Mikyoung Hur, and Gigliola Staffilani.
\newblock Strichartz estimates for the water-wave problem with surface tension.
\newblock {\em Comm. Partial Differential Equations}, 35(12):2195--2252, 2010.

\bibitem{CKSTT}
J.~Colliander, M.~Keel, G.~Staffilani, H.~Takaoka, and T.~Tao.
\newblock Global well-posedness and scattering for the energy-critical
  nonlinear {S}chr\"{o}dinger equation in {$\mathbb{R}^3$}.
\newblock {\em Ann. of Math. (2)}, 167(3):767--865, 2008.

\bibitem{CS-ls}
Peter Constantin and Jean-Claude Saut.
\newblock Local smoothing properties of dispersive equations.
\newblock {\em J. Amer. Math. Soc.}, 1(2):413--439, 1988.

\bibitem{CS}
Daniel Coutand and Steve Shkoller.
\newblock Well-posedness of the free-surface incompressible {E}uler equations
  with or without surface tension.
\newblock {\em J. Amer. Math. Soc.}, 20(3):829--930 (electronic), 2007.

\bibitem{CS2}
Daniel Coutand and Steve Shkoller.
\newblock On the finite-time splash and splat singularities for the 3-{D}
  free-surface {E}uler equations.
\newblock {\em Comm. Math. Phys.}, 325(1):143--183, 2014.

\bibitem{dePoyferre-ARMA}
Thibault de~Poyferr\'{e}.
\newblock A priori estimates for water waves with emerging bottom.
\newblock {\em Arch. Ration. Mech. Anal.}, 232(2):763--812, 2019.

\bibitem{dPN2}
Thibault de~Poyferr\'{e} and Quang-Huy Nguyen.
\newblock Strichartz estimates and local existence for the gravity-capillary
  waves with non-{L}ipschitz initial velocity.
\newblock {\em J. Differential Equations}, 261(1):396--438, 2016.

\bibitem{dPN1}
Thibault de~Poyferr\'{e} and Quang-Huy Nguyen.
\newblock A paradifferential reduction for the gravity-capillary waves system
  at low regularity and applications.
\newblock {\em Bull. Soc. Math. France}, 145(4):643--710, 2017.

\bibitem{dy-zak}
Alexander~I Dyachenko, Evgenii~A Kuznetsov, MD~Spector, and Vladimir~E
  Zakharov.
\newblock Analytical description of the free surface dynamics of an ideal fluid
  (canonical formalism and conformal mapping).
\newblock {\em Physics Letters A}, 221(1-2):73--79, 1996.

\bibitem{FeIoLi-Duke-2016}
Charles Fefferman, Alexandru~D. Ionescu, and Victor Lie.
\newblock On the absence of splash singularities in the case of two-fluid
  interfaces.
\newblock {\em Duke Math. J.}, 165(3):417--462, 2016.

\bibitem{GMS}
P.~Germain, N.~Masmoudi, and J.~Shatah.
\newblock Global solutions for the gravity water waves equation in dimension 3.
\newblock {\em Ann. of Math. (2)}, 175(2):691--754, 2012.

\bibitem{GuTi}
Yan Guo and Ian Tice.
\newblock Decay of viscous surface waves without surface tension.
\newblock {\em Anal. PDE}, 6(6):1429--1533, 2013.
\newblock arXiv:1011.5179.

\bibitem{H-GIT}
Benjamin Harrop-Griffiths, Mihaela Ifrim, and Daniel Tataru.
\newblock The lifespan of small data solutions to the {KP}-{I}.
\newblock {\em Int. Math. Res. Not. IMRN}, (1):1--28, 2017.

\bibitem{HIT}
Ifrim~Mihaela Hunter, John and Daniel Tataru.
\newblock Two dimensional water waves in holomorphic coordinates.
\newblock {\em Comm. Math. Phys.}, 346(2):483--552, 2016.
\newblock arXiv:1401.1252.

\bibitem{IT}
Mihaela Ifrim and Daniel Tataru.
\newblock Two dimensional water waves in holomorphic coordinates {II}: {G}lobal
  solutions.
\newblock {\em Bull. Soc. Math. France}, 144(2):369--394, 2016.

\bibitem{ITc}
Mihaela Ifrim and Daniel Tataru.
\newblock The lifespan of small data solutions in two dimensional capillary
  water waves.
\newblock {\em Arch. Ration. Mech. Anal.}, 225(3):1279--1346, 2017.

\bibitem{IT2018solitary}
Mihaela {Ifrim} and Daniel {Tataru}.
\newblock {No solitary waves in 2-d gravity and capillary waves in deep water}.
\newblock {\em arXiv e-prints}, page arXiv:1808.07916, Aug 2018.

\bibitem{IguchiCPDE}
Tatsuo Iguchi.
\newblock A long wave approximation for capillary-gravity waves and an effect
  of the bottom.
\newblock {\em Comm. Partial Differential Equations}, 32(1-3):37--85, 2007.

\bibitem{IoPu}
Alexandru~D. Ionescu and Fabio Pusateri.
\newblock Global solutions for the gravity water waves system in 2d.
\newblock {\em Invent. Math.}, 199(3):653--804, 2015.

\bibitem{IoPuc}
Alexandru~D. Ionescu and Fabio Pusateri.
\newblock Global regularity for 2{D} water waves with surface tension.
\newblock {\em Mem. Amer. Math. Soc.}, 256(1227):v+124, 2018.

\bibitem{KPVinvent}
Carlos~E. Kenig, Gustavo Ponce, and Luis Vega.
\newblock The {C}auchy problem for quasi-linear {S}chr\"{o}dinger equations.
\newblock {\em Invent. Math.}, 158(2):343--388, 2004.

\bibitem{LannesKelvin}
David Lannes.
\newblock A {S}tability {C}riterion for {T}wo-{F}luid {I}nterfaces and
  {A}pplications.
\newblock {\em Arch. Ration. Mech. Anal.}, 208(2):481--567, 2013.

\bibitem{LannesLivre}
David Lannes.
\newblock {\em Water waves: mathematical analysis and asymptotics}, volume 188
  of {\em Mathematical Surveys and Monographs}.
\newblock American Mathematical Society, Providence, RI, 2013.

\bibitem{MMT}
Jeremy Marzuola, Jason Metcalfe, and Daniel Tataru.
\newblock Strichartz estimates and local smoothing estimates for asymptotically
  flat {S}chr\"{o}dinger equations.
\newblock {\em J. Funct. Anal.}, 255(6):1497--1553, 2008.

\bibitem{MT}
Jason Metcalfe and Daniel Tataru.
\newblock Global parametrices and dispersive estimates for variable coefficient
  wave equations.
\newblock {\em Math. Ann.}, 353(4):1183--1237, 2012.

\bibitem{MingWang-LWP}
Mei Ming and Chao Wang.
\newblock Water waves problem with surface tension in a corner domain ii: the
  local well-posednes.
\newblock arXiv:1812.09911, 2018.

\bibitem{Morawetz1968}
Cathleen~S. Morawetz.
\newblock Time decay for the nonlinear {K}lein-{G}ordon equations.
\newblock {\em Proc. Roy. Soc. Ser. A}, 306:291--296, 1968.

\bibitem{Nalimov}
V.~I. Nalimov.
\newblock The {C}auchy-{P}oisson problem.
\newblock {\em Dinamika Splo\v sn. Sredy}, (Vyp. 18 Dinamika Zidkost. so
  Svobod. Granicami):104--210, 254, 1974.

\bibitem{Nguyen-AIHPANL-2017}
Huy~Quang Nguyen.
\newblock A sharp {C}auchy theory for the 2{D} gravity-capillary waves.
\newblock {\em Ann. Inst. H. Poincar\'{e} Anal. Non Lin\'{e}aire},
  34(7):1793--1836, 2017.

\bibitem{OR}
Tohru Ozawa and Keith~M. Rogers.
\newblock A sharp bilinear estimate for the {K}lein-{G}ordon equation in
  {$\mathbb{R}^{1+1}$}.
\newblock {\em Int. Math. Res. Not. IMRN}, (5):1367--1378, 2014.

\bibitem{PlVe}
Fabrice Planchon and Luis Vega.
\newblock Bilinear virial identities and applications.
\newblock {\em Ann. Sci. \'{E}c. Norm. Sup\'{e}r. (4)}, 42(2):261--290, 2009.

\bibitem{RoTz}
Fr{\'e}d{\'e}ric Rousset and Nikolay Tzvetkov.
\newblock Transverse instability of the line solitary water-waves.
\newblock {\em Invent. Math.}, 184(2):257--388, 2011.
\newblock Pr\'epublication 2009.

\bibitem{ScWa}
Guido Schneider and C.~Eugene Wayne.
\newblock The rigorous approximation of long-wavelength capillary-gravity
  waves.
\newblock {\em Arch. Ration. Mech. Anal.}, 162(3):247--285, 2002.

\bibitem{Schweizer}
Ben Schweizer.
\newblock On the three-dimensional {E}uler equations with a free boundary
  subject to surface tension.
\newblock {\em Ann. Inst. H. Poincar\'e Anal. Non Lin\'eaire}, 22(6):753--781,
  2005.

\bibitem{ShZe}
Jalal Shatah and Chongchun Zeng.
\newblock A priori estimates for fluid interface problems.
\newblock {\em Comm. Pure Appl. Math.}, 61(6):848--876, 2008.

\bibitem{Vega}
Luis Vega.
\newblock Schr\"{o}dinger equations: pointwise convergence to the initial data.
\newblock {\em Proc. Amer. Math. Soc.}, 102(4):874--878, 1988.

\bibitem{X-Wang-capillary}
Xuecheng Wang.
\newblock Global regularity for the 3d finite depth capillary water waves.
\newblock arXiv:1611.05472, 2016.

\bibitem{Wu97}
Sijue Wu.
\newblock Well-posedness in {S}obolev spaces of the full water wave problem in
  2-{D}.
\newblock {\em Invent. Math.}, 130(1):39--72, 1997.

\bibitem{Wu99}
Sijue Wu.
\newblock Well-posedness in {S}obolev spaces of the full water wave problem in
  3-{D}.
\newblock {\em J. Amer. Math. Soc.}, 12(2):445--495, 1999.

\bibitem{Wu09}
Sijue Wu.
\newblock Almost global wellposedness of the 2-{D} full water wave problem.
\newblock {\em Invent. Math.}, 177(1):45--135, 2009.

\bibitem{Wu10}
Sijue Wu.
\newblock Global wellposedness of the 3-{D} full water wave problem.
\newblock {\em Invent. Math.}, 184(1):125--220, 2011.

\bibitem{Yosihara}
Hideaki Yosihara.
\newblock Gravity waves on the free surface of an incompressible perfect fluid
  of finite depth.
\newblock {\em Publ. Res. Inst. Math. Sci.}, 18(1):49--96, 1982.

\bibitem{Zakharov1968}
V.~E. Zakharov.
\newblock Stability of periodic waves of finite amplitude on the surface of a
  deep fluid.
\newblock {\em Journal of Applied Mechanics and Technical Physics},
  9(2):190--194, 1968.

\bibitem{Zakharov1998}
Vladimir~E. Zakharov.
\newblock Weakly nonlinear waves on the surface of an ideal finite depth fluid.
\newblock In {\em Nonlinear waves and weak turbulence}, volume 182 of {\em
  Amer. Math. Soc. Transl. Ser. 2}, pages 167--197. Amer. Math. Soc.,
  Providence, RI, 1998.

\bibitem{Zhu1}
Hui Zhi.
\newblock Control of three dimensional water waves.
\newblock arXiv:1712.06130, 2017.

\bibitem{Zhu2}
Hui Zhu.
\newblock Propagation of singularities for gravity-capillary water waves.
\newblock arXiv:1810.09339, 2018.

\end{thebibliography}
 

\vspace{10mm}

\noindent\textbf{Thomas Alazard}\\
\noindent CNRS and CMLA, \'Ecole Normale Sup{\'e}rieure de Paris-Saclay, Cachan, France

\vspace{3mm}

\noindent\textbf{Mihaela Ifrim}\\
\noindent Department of Mathematics, University of 
Wisconsin, Madison, USA

\vspace{3mm}

\noindent\textbf{Daniel Tataru}\\
\noindent Department of Mathematics, University  of California, Berkeley, USA

 \end{document}